\newtheorem{prop}{Proposition}[section]
\newtheorem{coro}[prop]{Corollary}
\newtheorem{thm}[prop]{Theorem}
\newtheorem{hypothesis}[prop]{Hypothesis}
\newtheorem{lemma}[prop]{Lemma}
\begin{document}

\title[SGGIs for alternating groups]{The maximal rank of a string group generated by involutions  for alternating groups}

\author[J.~Anzanello]{Jessica Anzanello}
\address{Dipartimento di Matematica e Applicazioni, University of Milano-Bicocca, Via Cozzi~55, 20125 Milano, Italy
}
\email{j.anzanello@campus.unimib.it}

\author[M. E. Fernandes]{Maria Elisa Fernandes}
\address{Maria Elisa Fernandes,
Center for Research and Development in Mathematics and Applications, Department of Mathematics, University of Aveiro, Portugal
}
\email{maria.elisa@ua.pt}

\author[Pablo Spiga]{Pablo Spiga}
\address{Dipartimento di Matematica e Applicazioni, University of Milano-Bicocca, Via Cozzi~55, 20125 Milano, Italy
}
\email{pablo.spiga@unimib.it}

\begin{abstract}
A string group generated by involutions, or SGGI, is a pair $\Gamma=(G, S)$, where $G$ is a group and  $S=\{\rho_0,\ldots, \rho_{r-1}\}$ is an ordered set of involutions generating  $G$ and satisfying the commuting property: 
$$\forall i,j\in\{0,\ldots, r-1\}, \;|i-j|\ne 1\Rightarrow (\rho_i\rho_j)^2=1.$$
When $S$ is an independent set, the rank of $\Gamma$  is the cardinality of $S$. 

We determine an upper bound for the  rank of an SGGI over the alternating group  of degree $n$. Our bound is tight when $n\equiv 0,1,4\pmod 5$.
\end{abstract}

\keywords{string group, involution, alternating group, polytope} 
\subjclass[2010]{05C25, 05C30, 20B25, 20B15}

\maketitle

\section{Introduction}\label{sec:intro}

Let $G$ be a group and let $S$ be a subset of $G$. We say that $S$ is an \textbf{\textit{independent generating set}} for $G$  if $G=\langle S\rangle$ and  $s\notin \langle S\setminus\{s\}\rangle$, $\forall s\in S$.

A \textbf{\textit{string group generated by involutions}} or, for short, an \textit{\textbf{SGGI}} is a pair $\Gamma=(G, S)$, where $G$ is a group and  $S=\{\rho_0,\ldots, \rho_{r-1}\}$ is an ordered set of involutions generating  $G$, satisfying the following property\footnote{This is usually referred to as the commuting property.}: 
$$\forall i,j\in\{0,\ldots, r-1\}, \;|i-j|\ne 1\Rightarrow (\rho_i\rho_j)^2=1.$$ To avoid cumbersome notation,  when  $S$ is understood, we simply say that $G$ is an SGGI.
The \textit{\textbf{dual}} of an SGGI is obtained by reversing the order of the string of generators. A group $G$ is an SGGI of \textit{\textbf{rank}} $r$ if $(G,S)$ is an SGGI, for some independent generating set $S$ of cardinality $r$. 

SGGIs play a prominent role in the study of polytopes. In fact, there is a one-to-one correspondence between abstract regular polytopes and string C-groups, which are SGGIs whose generating set $S = \{\rho_0, \ldots, \rho_{r-1}\}$ satisfies the following condition, known as the \emph{intersection property:
\[
\left\langle \rho_i \mid i \in J \right\rangle \cap \left\langle \rho_i \mid i \in K \right\rangle = \left\langle \rho_i \mid i \in J \cap K \right\rangle,
\]
for all subsets $J, K \subseteq \{0, \ldots, r-1\}$. This correspondence is  described with detail in \cite{arp}.} 

 The maximal rank of the symmetric group of degree $n$ is $n-1$ and the bound is attained, for instance, by the Coxeter generators, see~\cite{2000W}. In this paper we show that the maximal rank of an SGGI for the alternating group $\mathrm{Alt}(n)$ of degree $n$ is quite a bit smaller than $n-1$.
\begin{thm}\label{main}
The following holds:
\begin{itemize}
\item the group $\mathrm{Alt}(5)$ admits SGGIs and they all have rank $3$;
\item the groups $\mathrm{Alt}(3)$, $\mathrm{Alt}(4)$, $\mathrm{Alt}(6)$, $\mathrm{Alt}(7)$ and $\mathrm{Alt}(8)$ do not admit SGGIs;
\item for $n\ge 9$,
the maximum rank of an SGGI for $\mathrm{Alt}(n)$ is at most   $\lfloor 3(n-1)/5\rfloor$. Moreover, this bound is attained when $n\equiv 0,1,4\pmod 5$.
\end{itemize}
\end{thm}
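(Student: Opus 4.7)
The theorem has three parts. The first two, concerning $\Alt(5)$ and the non-existence for $\Alt(n)$ with $n\in\{3,4,6,7,8\}$, are finite-group verifications that I would carry out by direct case analysis (possibly aided by a short computer search over ordered tuples of involutions satisfying the commuting property). For instance $\Alt(3)$ has no involutions at all; $\Alt(4)$ has only the three Klein involutions, which commute pairwise and thus can never generate a nonabelian group via an SGGI; for $n\in\{5,6,7,8\}$ the search is over a bounded set of configurations. The substantive content is the third item, and I focus on it below.

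\textbf{Upper bound for $n\ge 9$.} The main tool is the permutation representation graph $\mathcal{G}(S)$ of an SGGI $(G,S)$ with $G\le\Sym(n)$: vertices are $\{1,\ldots,n\}$ and for each transposition $(a,b)$ appearing in the cycle decomposition of $\rho_i$ one places an edge $\{a,b\}$ of colour $i$. The commuting property forces a strong local condition: two edges of colours $i,j$ with $|i-j|\ne 1$ meeting at a vertex must complete to a $2$-coloured $4$-cycle. Because $\rho_i\in\Alt(n)$, the cycle decomposition of $\rho_i$ contains an even number of transpositions, so each colour class has at least two edges and moves at least four points. The plan is to convert these restrictions into a global vertex-counting inequality. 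For each $i$ I would analyse the subgraph spanned by colours $\{i-1,i,i+1\}$ and keep track of how many vertices of the support of $\rho_i$ are ``new'' (not yet moved by $\rho_0,\ldots,\rho_{i-1}$). The ratio $3/5$ should emerge from the claim that introducing one new colour class costs on average at least $5/3$ new vertices, while the global ``$-1$'' in $\lfloor 3(n-1)/5\rfloor$ should come from the handling of one distinguished connected component of $\mathcal{G}(S)$.

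\textbf{Main obstacle.} I expect the hardest step to be making the ``$5/3$ new vertices per generator'' estimate precise, since one must carefully exclude configurations in which a new colour is hosted on already-used vertices; such savings have to be ruled out by using either the independence of $S$ or the fact that $\rho_i$ is a product of an even number of transpositions. I would split the analysis into cases according to whether a newly added $\rho_i$ shares support with one or both of its neighbours $\rho_{i-1},\rho_{i+1}$, and bound the savings in each case by a local structural lemma for $\mathcal{G}(S)$.

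\textbf{Constructions for $n\equiv 0,1,4\pmod 5$.} For the tightness part I would exhibit, on $\{1,\ldots,n\}$, an explicit family of $r=\lfloor 3(n-1)/5\rfloor$ involutions $\rho_0,\ldots,\rho_{r-1}$, produced by repeating a fixed block on five consecutive points that contributes exactly three generators, with a small ad hoc adjustment at the ends depending on $n\bmod 5$. For each such family I would verify (i) the commuting property, which is immediate from the block structure; (ii) independence of $S$, by checking that removing any single $\rho_i$ cuts a connected piece out of $\mathcal{G}(S)$; and (iii) generation of $\Alt(n)$. The delicate item is (iii), since the natural block generators sit inside transitive imprimitive subgroups of $\Sym(n)$. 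I would prove transitivity from connectivity of $\mathcal{G}(S)$, exhibit a short word in the $\rho_i$ producing a $3$-cycle or a $5$-cycle, and then invoke a standard consequence of Jordan's theorem on primitive groups containing a short cycle to conclude that $\langle S\rangle\supseteq\Alt(n)$; membership in $\Alt(n)$ follows from each $\rho_i$ being a product of an even number of transpositions.
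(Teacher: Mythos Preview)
Your treatment of the small cases and of the constructions is broadly in line with the paper (which also dispatches $n\le 9$ by computation and gives explicit permutation representation graphs for the lower bounds), but your plan for the upper bound is not the paper's and, more importantly, has a genuine gap.

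The paper does \emph{not} prove the bound by a uniform ``$5/3$ new vertices per generator'' count. Instead it runs a case split driven by the behaviour of the subgroups $G_i=\langle S\setminus\{\rho_i\}\rangle$. If every $G_i$ is intransitive, one has a \emph{fracture graph}; if in addition each $\rho_i$ contributes two edges between distinct $G_i$-orbits, one gets a \emph{$2$-fracture graph} and a forest-type count gives $r\le n/2$. Otherwise some index is a \emph{split}, and the argument bifurcates on whether all splits are perfect (then one bounds the number $s$ of splits by $s\le 2r/3$ and combines with $r\le (2n+s-2)/4$, which is where $3(n-1)/5$ actually appears) or some split is non-perfect (a much longer inductive analysis). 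Crucially, there is a separate case in which some $G_i$ is \emph{transitive}; there the paper invokes Mar\'oti's bounds on primitive groups and Whiston's theorem on independent generating sets of $\mathrm{Sym}(m)$, together with a wreath-product reduction for the imprimitive subcase.

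Your proposal misses this last case entirely. A local vertex-accounting scheme, tracking how many points of $\mathrm{supp}(\rho_i)$ are ``new'', implicitly assumes that removing $\rho_i$ disconnects something in the permutation representation graph---i.e.\ that $G_i$ is intransitive. When $G_i$ is transitive there is no such ``fracture'' edge to charge $\rho_i$ against, and the bound must come from group theory (orders of primitive groups, independent generating sets), not from graph combinatorics on $\{1,\ldots,n\}$. Even within the intransitive case, the heuristic ``one new colour costs $5/3$ vertices'' is not something you can read off from the commuting property and evenness alone: the paper needs the split/fracture machinery precisely because a colour can live entirely on old vertices while still being independent, and the savings are controlled not by local square-completion but by the global constraint that $G_i$ has more orbits than $G$. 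So as written, your Step ``Upper bound'' is a restatement of the target inequality rather than a strategy that leads to it; to make it work you would in effect have to rediscover fracture graphs, splits, and the transitive-$G_i$ reduction.
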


In Table~\ref{table:examples}, we present permutation representation graphs of SGGIs for $\mathrm{Alt}(n)$ (see Section~\ref{prem} for the definition of a permutation representation graph). These graphs have rank $\lfloor 3(n-1)/5\rfloor$ when $n\equiv 0,1,4\pmod{5}$ and rank $\lfloor (3n-8)/5\rfloor$ when $n\equiv 2,3\pmod{5}$. The constructions depend on the parity of $n$ modulo $5$.  Moreover, in Table~\ref{table:examples}, we require $n\ge 22$ when $n\equiv 2\pmod 5$ and $n\ge 18$ when $n\equiv 3\pmod 5$. When $n\in \{12,13,17\}$, SGGIs for $\mathrm{Alt}(n)$ of rank $\lfloor (3n-8)/5\rfloor$ have been found with the help of a computer.\footnote{We are using the computer algebra system magma~\cite{magma} in this paper. Some of our arguments require extensive computations. We do not include the code for these computations in this manuscript, but the interested reader can find it in our companion paper submitted to arXiv~\cite{arxiv}.
}

Computational evidence suggests that for $n\equiv 2,3\pmod{5}$, the maximum rank of an SGGI for $\mathrm{Alt}(n)$ is $\lfloor (3n-8)/5\rfloor$. Consequently, Theorem~\ref{main} might be improved for these two congruence classes modulo $5$.  

\begin{table}[htbp]
\begin{adjustbox}{angle=90}
\begin{tabular}{|l|l|}\hline
Conditions&Permutation representation graph\\\hline
&\\
$n\equiv 0\text{ mod } 5$,&
$$ \xymatrix@-0.7pc{  *+[o][F]{}  \ar@{-}[r]^0 & *+[o][F]{}  \ar@{-}[r]^1 & *+[o][F]{}  \ar@{-}[r]^0 & *+[o][F]{}  \ar@{-}[r]^1 & *+[o][F]{}  \ar@{-}[r]^2& *+[o][F]{}  \ar@{-}[r]^3 & *+[o][F]{}  \ar@{=}[r]^2_4 & *+[o][F]{}  \ar@{-}[r]^3 &*+[o][F]{}  \ar@{-}[r]^4 & *+[o][F]{}  \ar@{.}[r] &   *+[o][F]{}  \ar@{-}[r]^{r-3} & *+[o][F]{}  \ar@{-}[r]^{r-2}  & *+[o][F]{}  \ar@{=}[r]^{r-3}_{r-1}  & *+[o][F]{}  \ar@{-}[r]^{r-2}  & *+[o][F]{}  \ar@{-}[r]^{r-1} & *+[o][F]{}  }$$\\
$n\ge 10$&\\\hline
&\\
$ n\equiv 1\text{ mod } 5,$&
$$\xymatrix@-0.7pc{  *+[o][F]{}  \ar@{-}[r]^0 & *+[o][F]{}  \ar@{-}[r]^1 & *+[o][F]{}  \ar@{=}[r]^0_2 & *+[o][F]{}  \ar@{-}[r]^1 & *+[o][F]{}  \ar@{-}[r]^2& *+[o][F]{}  \ar@{-}[r]^3 & *+[o][F]{}  \ar@{-}[r]^4 & *+[o][F]{}  \ar@{=}[r]^3_5 & *+[o][F]{}  \ar@{-}[r]^4 & *+[o][F]{}  \ar@{-}[r]^5& *+[o][F]{}  \ar@{.}[r] &   *+[o][F]{}  \ar@{-}[r]^{r-3} & *+[o][F]{}  \ar@{-}[r]^{r-2}  & *+[o][F]{}  \ar@{=}[r]^{r-3}_{r-1}  & *+[o][F]{}  \ar@{-}[r]^{r-2}  & *+[o][F]{}  \ar@{-}[r]^{r-1} & *+[o][F]{}  }$$\\
$n\ge 11$&\\\hline
&\\
$ n\equiv 2\text{ mod } 5$,  &
$$\xymatrix@-0.7pc{  *+[o][F]{}  \ar@{-}[r]^0 & *+[o][F]{}  \ar@{-}[r]^1 & *+[o][F]{}  \ar@{-}[r]^0 &*+[o][F]{}  \ar@{-}[r]^1 & *+[o][F]{}  \ar@{.}[r] & *+[o][F]{}  \ar@{-}[r]^6 & *+[o][F]{}  \ar@{-}[r]^7 & *+[o][F]{}  \ar@{-}[r]^6 &*+[o][F]{}  \ar@{-}[r]^7 &  *+[o][F]{}  \ar@{-}[r]^8 & *+[o][F]{}  \ar@{-}[r]^9 & *+[o][F]{}  \ar@{=}[r]^8_{10} & *+[o][F]{}  \ar@{-}[r]^9 &*+[o][F]{}  \ar@{-}[r]^{10} & *+[o][F]{}  \ar@{.}[r] &   *+[o][F]{}  \ar@{-}[r]^{r-3} & *+[o][F]{}  \ar@{-}[r]^{r-2}  & *+[o][F]{}  \ar@{=}[r]^{r-3}_{r-1}  & *+[o][F]{}  \ar@{-}[r]^{r-2}  & *+[o][F]{}  \ar@{-}[r]^{r-1} & *+[o][F]{}  }$$\\
$n\geq 22$&\\\hline

&\\
$ n\equiv 3\text{ mod } 5$, &
$$\xymatrix@-0.7pc{  *+[o][F]{}  \ar@{-}[r]^0 & *+[o][F]{}  \ar@{-}[r]^1 & *+[o][F]{}  \ar@{-}[r]^0 &*+[o][F]{}  \ar@{-}[r]^1 & *+[o][F]{}  \ar@{.}[r] & *+[o][F]{}  \ar@{-}[r]^4 & *+[o][F]{}  \ar@{-}[r]^5 & *+[o][F]{}  \ar@{-}[r]^4 &*+[o][F]{}  \ar@{-}[r]^5 &  *+[o][F]{}  \ar@{-}[r]^6 & *+[o][F]{}  \ar@{-}[r]^7 & *+[o][F]{}  \ar@{=}[r]^6_8 & *+[o][F]{}  \ar@{-}[r]^7 &*+[o][F]{}  \ar@{-}[r]^8 & *+[o][F]{}  \ar@{.}[r] &   *+[o][F]{}  \ar@{-}[r]^{r-3} & *+[o][F]{}  \ar@{-}[r]^{r-2}  & *+[o][F]{}  \ar@{=}[r]^{r-3}_{r-1}  & *+[o][F]{}  \ar@{-}[r]^{r-2}  & *+[o][F]{}  \ar@{-}[r]^{r-1} & *+[o][F]{}  }$$\\
$n\geq 18$&\\\hline
 
&\\
$ n\equiv 4\text{ mod } 5$, &
$$\xymatrix@-0.7pc{  *+[o][F]{}  \ar@{-}[r]^0 & *+[o][F]{}  \ar@{-}[r]^1 & *+[o][F]{}  \ar@{-}[r]^0 &*+[o][F]{}  \ar@{-}[r]^1 & *+[o][F]{}  \ar@{-}[r]^2 & *+[o][F]{}  \ar@{-}[r]^3 & *+[o][F]{}  \ar@{-}[r]^2 & *+[o][F]{}  \ar@{-}[r]^3& *+[o][F]{}  \ar@{-}[r]^4 & *+[o][F]{}  \ar@{-}[r]^5 & *+[o][F]{}  \ar@{=}[r]^4_6 & *+[o][F]{}  \ar@{-}[r]^5 &*+[o][F]{}  \ar@{-}[r]^6 & *+[o][F]{}  \ar@{.}[r] &   *+[o][F]{}  \ar@{-}[r]^{r-3} & *+[o][F]{}  \ar@{-}[r]^{r-2}  & *+[o][F]{}  \ar@{=}[r]^{r-3}_{r-1}  & *+[o][F]{}  \ar@{-}[r]^{r-2}  & *+[o][F]{}  \ar@{-}[r]^{r-1} & *+[o][F]{}  } $$\\
$n\geq 14$&\\\hline
\end{tabular}
\end{adjustbox}
\caption{Permutation representation graphs for the upper bound in Theorem~\ref{main}.} \label{table:examples}
\end{table}

\section{Preliminaries, notation and basic results}\label{prem}

\subsection{String groups generated by involutions and their friends}
We collect here some basic information and notation.

In the case that the group $G$ is a permutation group, we introduce an auxiliary gadget for graphically  representing an SGGI. Let  $G$ be a permutation group of degree $n$, and let $\Gamma=(G, \{\rho_0,\ldots,\rho_{r-1}\})$ be an SGGI. The \textit{\textbf{permutation representation graph}} of $\Gamma$ is the $r$-edge-labeled multigraph with vertex set the domain of $G$, and with an edge
 $\{a,\,b\}$ having colour $i$, for each $a,b$  such that $a\ne b$ and $a\rho_i=b$, and for each $i\in\{0,\ldots,r-1\}$.\footnote{When we ignore the edge colours in a permutation representation graph, we obtain what is usually called the Schreier graph of the permutation group.} For instance, when $\Gamma=(\langle (1\,2)(3\,4),(1\,3)(2\,4)\rangle,\{(1\,2)(3\,4),(1\,3)(2\,4),(1\,4)(2\,3)\})$, the permutation representation graph is the complete graph on $4$ vertices, where to each perfect matching is  assigned a distinct color. The permutation representation graph has $n$ vertices and $\sum_{i=0}^{r-1}c_i$ edges, where $c_i$ is the number of cycles of length $2$ in $\rho_i$.

Since some of our arguments are inductive, we need some ad hoc notation for our work. Let $\Gamma=(G,\{\rho_0,\ldots,\rho_{r-1}\})$ be an SGGI and let $i,i_1,\ldots,i_k\in \{0,\ldots,r-1\}$. We let
\begin{align*}
G_{i_1,\ldots,i_k} &=\langle \rho_j \,|\, j \notin \{i_1,\ldots,i_k\}\rangle,&
\Gamma_{i_1,\ldots,i_k} &=(G_{i_1,\ldots,i_k}, \{\rho_j \,|\, j \notin \{i_1,\ldots,i_k\}\}),\\
G_{\{i_1,\ldots,i_k\}} &=\langle\rho_j \,|\, j \in \{i_1,\ldots,i_k\}\rangle,
&\Gamma_{\{i_1,\ldots,i_k\}} &=(G_{\{i_1,\ldots,i_k\}}, \{\rho_j \,|\, j \in \{i_1,\ldots,i_k\}\}),\\
G_{<i} &=\langle\rho_0,\ldots, \rho_{i-1}\rangle,
&\Gamma_{<i} &= (G_{<i},\{ \rho_0,\ldots, \rho_{i-1}\}),\,\,\hbox{ for } i\neq 0,\\
G_{>i} &=\langle \rho_{i+1},\ldots, \rho_{r-1}\rangle,
& \Gamma_{>i} &= (G_{>i},\{ \rho_{i+1},\ldots, \rho_{r-1}\}),\,\,\hbox{ for } i\neq r-1.
\end{align*}

Finally, since our work builds upon some of the results in~\cite{2017CFLM}, we recall the definition of \textit{\textbf{string C-group}}.  A string C-group is an SGGI $(G,\{\rho_0,\ldots,\rho_{r-1}\}) $ satisfying the \textit{\textbf{intersection property}}\footnote{This property is of paramount importance for studying polytopes.}:
$$G_I\cap G_J=G_{I\cap J}\text{ for each }I,J\subseteq \{0,\ldots,r-1\}.$$
When $\Gamma$ is a string C-group, $S$ is an independent generating set for $G$ and hence $\Gamma$ is an SGGI of rank $|S|$. For instance, when $G$ is the symmetric group $\mathrm{Sym}(n)$ and $S=\{(1\,2),(2\,3),\ldots,(n-1\,n)\}$, we see that $(G,S)$ is a string C-group and that the permutation representation graph is the usual Coxeter diagram.

\subsection{Fracture graphs and 2-fracture graphs}\label{sec:fracturegraphs}
Let $\Gamma=(G,\{\rho_0,\,\ldots,\,\rho_{r-1}\})$ be an SGGI, where $G$ is a permutation group with domain $\{1,\,\ldots,\,n\}$.

Here, we define fracture graphs when $G$ is transitive and $G_i$ is intransitive for every $i\in\{0,\ldots,r-1\}$.\footnote{In Section~\ref{sec:5}, we need an analogous definition when $G$ is intransitive. However, we postpone this technical difficulty until it is really necessary.} Therefore, we assume these conditions hold throughout this section and whenever fracture graphs are used.

For each $i$, since $G_i$ is intransitive, $\rho_i$ has at least one cycle (of length 2) containing points from
different $G_i$-orbits. Choosing one such cycle for each $i$, we construct a graph having vertex set $\{1,\ldots,n\}$,
 where each selected cycle defines an edge. The resulting graph, which has $r$ edges, is called a \textit{\textbf{fracture graph}} for $\Gamma$. In general, fracture graphs are  not unique, as they depend on the choice of cycles.

Additionally, under the assumption  that, for each $i$, $\rho_i$ has at least two cycles containing points from
different $G_i$-orbits, then, taking (for each $i$) two $i$-edges between each of these pairs of points, we obtain a graph on $n$ vertices with $2r$ edges that we call a
$2$-\textbf{\textit{fracture graph}} for $\Gamma$. Observe that, when $G$ is contained in the alternating group $\mathrm{Alt}(n)$, each $\rho_i$ does have at least two cycles (of length 2).

In~\cite{2017CFLM}, the authors proved the following two results. Although~\cite{2017CFLM} focuses on string C-groups, none of the proofs in \cite[Section~4]{2017CFLM} rely on the intersection property. Consequently, these results remain valid for SGGIs as well.

\begin{prop}[Proposition 4.12,~\cite{2017CFLM}]\label{dis2F}
Let \(\Gamma = (G, S)\) be an SGGI for a permutation group \( G \) of degree \( n \) admitting at least one $2$-fracture graph. If \( \Gamma \) has no connected $2$-fracture graph, then there exists a $2$-fracture graph having at least one component that is a tree, while all other components are either trees or have at most one cycle (which is an alternating square).
\end{prop}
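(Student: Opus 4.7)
\textbf{Proof plan for Proposition~\ref{dis2F}.} My plan is an exchange/minimality argument over the set of all $2$-fracture graphs of $\Gamma$. First I would fix a lexicographic optimum: among all $2$-fracture graphs select one $F$ that maximizes the number $c(F)$ of components, subject to this maximizes the number of tree components, and subject to that minimizes the number of cycles that are not alternating squares. Since a $2$-fracture graph has $n$ vertices and $2r$ edges, its cyclomatic complexity is $\mu(F)=2r-n+c(F)$, so maximizing $c(F)$ is the same as minimizing $\mu(F)$. I would also record the following easy structural fact: for each color $i$, the two chosen $i$-edges of $F$ are vertex-disjoint and lie between distinct $G_i$-orbits, hence any simple cycle of $F$ contains each color either $0$ or $2$ times, and the two occurrences of the same color are never adjacent along the cycle. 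Finally, if $\rho_i$ happens to have a third $2$-cycle between distinct $G_i$-orbits, then either $i$-edge of $F$ may be swapped for it to produce another $2$-fracture graph; this is the basic exchange move.

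Second, I would show that every cycle of $F$ is an alternating square. Let $D$ be a simple cycle in some component of $F$ and let $i,j,\dots$ be the colors that occur. By the fact above, at least two colors appear. If exactly two colors $i,j$ appear and $D$ has length $4$, the relations $a\rho_i\rho_j=a\rho_j\rho_i$ along the cycle force $(\rho_i\rho_j)^2$ to fix a vertex; combined with the SGGI commuting property this forces $|i-j|\ne 1$, so $D$ is genuinely an alternating square. In all remaining cases (length greater than $4$, or three or more colors, or length $4$ with $|i-j|=1$) I would pick a color $i$ appearing in $D$ and perform an exchange on one of its $i$-edges: either with a third available $2$-cycle of $\rho_i$ lying in a different component, or, if none is available, with the parallel $i$-edge whose role in the new graph is dictated by the commuting relations inside $D$. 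A short case analysis shows the resulting graph is still a $2$-fracture graph and either increases $c(F)$ or preserves $c(F)$ while strictly reducing the number of non-square cycles, contradicting the choice of $F$.

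Third, I would show that at least one component of $F$ is a tree. Assume for contradiction that every component contains a cycle; by the previous step each such cycle is an alternating square, so $\mu(F)=c(F)$ and thus each component contains exactly one alternating square while $2r=n$. Since $\Gamma$ admits no connected $2$-fracture graph by hypothesis, $c(F)\ge 2$, and there must exist a color $i$ whose two $i$-edges lie in two different alternating squares (otherwise one can check the cyclomatic count in a single component is off). Swapping one such $i$-edge according to the exchange above either merges the two squares into a single cycle (reducing the total number of independent cycles, hence increasing $c(F)$) or destroys an alternating square without creating a new one (producing a tree component and contradicting the maximality of the number of tree components at the chosen optimum). Either way we contradict the choice of $F$, finishing the proof.

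The main obstacle is the exchange step in the second paragraph: it is not automatic that a third $2$-cycle of $\rho_i$ between distinct $G_i$-orbits exists, and in its absence one must use the commuting relations $(\rho_i\rho_k)^2=1$ for $|i-k|\ne 1$ together with the local $G_i$-orbit structure inside the cycle to engineer a legitimate replacement. Carefully handling the pair of consecutive indices $i, i\pm 1$ — for which no commuting relation is available — is where essentially all the delicate bookkeeping lies, and this is also the step that forces the precise conclusion about \emph{alternating} squares rather than arbitrary $4$-cycles.
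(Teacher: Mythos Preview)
The present paper does not prove this proposition at all: it is imported verbatim from \cite[Proposition~4.12]{2017CFLM} and used as a black box, so there is no in-paper argument to compare your proposal against. Your global strategy---optimize over all $2$-fracture graphs and show an extremal one has the required shape---is the natural one and is in the spirit of \cite{2017CFLM}, but what you have written is a plan with the substantive steps deferred, and two of those deferred steps contain genuine errors rather than merely missing bookkeeping.

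First, in your second paragraph you assert that a $4$-cycle carrying exactly the two colours $i,j$ forces $|i-j|\ne 1$, because $(\rho_i\rho_j)^2$ fixes a vertex and ``combined with the SGGI commuting property'' this gives the conclusion. That implication runs the wrong way: the commuting property says $|i-j|\ne 1\Rightarrow(\rho_i\rho_j)^2=1$, and knowing that $(\rho_i\rho_j)^2$ fixes four particular points says nothing about $|i-j|$. A $4$-cycle with consecutive labels is perfectly possible in a $2$-fracture graph and must be eliminated by an exchange move, not ruled out a priori. Second, in your third paragraph the claim that some colour must have its two edges lying in two \emph{different} alternating squares (``otherwise the cyclomatic count in a single component is off'') is unjustified: nothing you have set up prevents each component from carrying its own private pair of square-colours while all remaining colours occur only on tree edges, and the global count $2r=n$ is then perfectly consistent. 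You need a different lever to produce a tree component.

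Finally, the exchange you yourself flag as the ``main obstacle'' is indeed the whole content of the proof, and the phrase ``the parallel $i$-edge whose role in the new graph is dictated by the commuting relations'' is not yet a definition of a move, let alone a verification that the move stays inside the class of $2$-fracture graphs and improves the chosen invariant. In \cite{2017CFLM} this step is carried by a chain of preparatory lemmas; your sketch has correctly located where the difficulty is but has not resolved it.
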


\begin{coro}[Corollary 4.19, \cite{2017CFLM}]\label{c2F}
Assume $n\geq 9$. Let $\Gamma = (G, S)$ be an SGGI for a permutation group $G$ of degree $n$. If  $\Gamma$ has a connected $2$-fracture graph that is not a tree, then $n$ is even, $\Gamma$ has rank $n/2$ and, up to duality, $\Gamma$ falls into one of the following two cases:
\begin{enumerate}
    \item\label{c2F1} $G$ is permutation isomorphic to $\mathrm{Sym}(2)\times\mathrm{Sym}(n/2)$, and the permutation representation graph is
    $$
    \xymatrix@-1.4pc{
    *+[o][F]{}\ar@{-}[dd]_0\ar@{-}[rr]^{r-1}&&*+[o][F]{}\ar@{-}[dd]_0\ar@{.}[rr]&&*+[o][F]{}\ar@{-}[rr]^5\ar@{-}[dd]_0&&*+[o][F]{}\ar@{-}[rr]^4\ar@{-}[dd]_0&&*+[o][F]{}\ar@{-}[rr]^3\ar@{-}[dd]_0&&*+[o][F]{}\ar@{-}[rr]^2 \ar@{-}[dd]_0 &&*+[o][F]{}   \ar@{-}[dd]^0\ar@{-}[rr]^1&& *+[o][F]{} \ar@{-}[dd]^0\\
    &&&&&&&&&&&&&&\\
    *+[o][F]{}\ar@{-}[rr]_{r-1}*+[o][F]{}&&*+[o][F]{}\ar@{.}[rr]&&*+[o][F]{}\ar@{-}[rr]_5&&*+[o][F]{}\ar@{-}[rr]_4&&*+[o][F]{}\ar@{-}[rr]_3&&*+[o][F]{}\ar@{-}[rr]_2&&*+[o][F]{} \ar@{-}[rr]_1&& *+[o][F]{} }
    $$
    
    \item\label{c2F2}  $G$ is permutation isomorphic to $\mathrm{Sym}(2) \wr \mathrm{Sym}(n/2)$ when $n/2$ is even, and   
    $G$ is permutation isomorphic to $\mathrm{Sym}(2)^{n/2-1} \rtimes \mathrm{Sym}(n/2)$ (which is a subgroup of index $2$ in $\mathrm{Sym}(2) \wr \mathrm{Sym}(n/2)$) when $n/2$ is odd. The permutation representation graph is 
    $$
    \xymatrix@-1.4pc{
    *+[o][F]{}\ar@{-}[dd]_0\ar@{-}[rr]^{r-1}&&*+[o][F]{}\ar@{-}[dd]_0\ar@{.}[rr]&&*+[o][F]{}\ar@{-}[rr]^5\ar@{-}[dd]_0&&*+[o][F]{}\ar@{-}[rr]^4\ar@{-}[dd]_0&&*+[o][F]{}\ar@{-}[rr]^3\ar@{-}[dd]_0&&*+[o][F]{}\ar@{-}[rr]^2 \ar@{-}[dd]_0 &&*+[o][F]{}   \ar@{-}[dd]^0\ar@{-}[rr]^1&& *+[o][F]{} \\
    &&&&&&&&&&&&&&\\
    *+[o][F]{}\ar@{-}[rr]_{r-1}*+[o][F]{}&&*+[o][F]{}\ar@{.}[rr]&&*+[o][F]{}\ar@{-}[rr]_5&&*+[o][F]{}\ar@{-}[rr]_4&&*+[o][F]{}\ar@{-}[rr]_3&&*+[o][F]{}\ar@{-}[rr]_2&&*+[o][F]{} \ar@{-}[rr]_1&& *+[o][F]{} }
    $$
\end{enumerate}
\end{coro}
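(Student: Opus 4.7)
The plan is to start from the edge-count inequality and whittle the graph down to one of the two ladders. A $2$-fracture graph has $n$ vertices and $2r$ edges, so if it is connected and not a tree then $2r \ge n$. The first step is to pin this to equality. If $2r > n$, the connected $2$-fracture graph contains at least two independent cycles; each cycle must be \emph{alternating} in its colour sequence (otherwise two non-consecutive colours $i,j$ occur along the cycle in a configuration ruling out $(\rho_i\rho_j)^2=1$). Two alternating cycles sharing vertices, together with the commuting property, force relations that collapse one of the $\rho_k$ into the subgroup generated by the others, contradicting independence. Hence $2r=n$, so $n$ is even and $r=n/2$, and the $2$-fracture graph $F$ is unicyclic.

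Next I would analyse the unique cycle $C$ in $F$. Again $C$ must be alternating in its labels, and using $|i-j|\ne 1 \Rightarrow (\rho_i\rho_j)^2=1$ I would show that, along $C$, only two colours can appear, and they must be adjacent in the string; relabel them $0$ and $1$. This forces $C$ to be a $4$-cycle built from both $2$-cycles of $\rho_0$ and both $2$-cycles of $\rho_1$ inside $F$. Then I would grow the rest of $F$ outward from $C$: at any vertex on $C$, the only edges that can attach are of colour $2$ (by commuting with $\rho_0$), and inductively one obtains a ladder whose rungs are coloured $0$ and whose two horizontal rails are each coloured $1,2,\ldots, r-1$ in order. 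This produces the picture drawn in the statement, with the alternating square sitting at one end.

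The last step is to identify $G$ up to duality. The two possibilities differ by what happens at the \emph{far} end of the ladder: either the rung at the far end is a genuine $0$-edge (the graph of case~(\ref{c2F1})), or the two rails close up onto each other and no such rung exists (case~(\ref{c2F2})). In case~(\ref{c2F1}) the rung-involution $\rho_0$ is central among the generators, giving the direct product $\mathrm{Sym}(2)\times\mathrm{Sym}(n/2)$ acting on the two rails. In case~(\ref{c2F2}) the missing rung lets the horizontal generators permute the rungs non-trivially, so $\rho_0$ together with the $\rho_i$'s generates the imprimitive wreath action on $n/2$ blocks of size $2$; a sign computation on the product of all generators, which tracks the parity contribution of the rungs actually present, shows that when $n/2$ is odd the image lies in the index-$2$ subgroup $\mathrm{Sym}(2)^{n/2-1}\rtimes \mathrm{Sym}(n/2)$, while for $n/2$ even one recovers the full $\mathrm{Sym}(2)\wr\mathrm{Sym}(n/2)$.

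The main obstacle I anticipate is the structural step that excludes $2r>n$ and forces $C$ to be a $4$-cycle in only two adjacent colours; both arguments rest on extracting an algebraic contradiction from a purely combinatorial configuration, and they require careful bookkeeping of the commuting relations across the cycle. Once the ladder picture is in place, the identification of $G$ and the parity dichotomy in case~(\ref{c2F2}) is routine.
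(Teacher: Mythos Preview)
This corollary is not proved in the present paper: it is quoted as Corollary~4.19 of \cite{2017CFLM}, with only the remark that the arguments of \cite[Section~4]{2017CFLM} nowhere use the intersection property and so apply to SGGIs. There is thus no proof here to compare against; in \cite{2017CFLM} the result is the endpoint of a chain of lemmas between Proposition~4.12 and Corollary~4.19.

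On its own merits, your outline has the right shape but a real gap. You assert that the unique cycle $C$ in the given $2$-fracture graph $F$ must be a $4$-cycle in two adjacent colours, but this is not forced: with the ladder of case~(\ref{c2F1}) one can choose two \emph{non-adjacent} $0$-rungs for $F$ and obtain a $6$-cycle (or longer) whose label sequence uses three colours. The commuting relation $(\rho_0\rho_2)^2=1$ does create an alternating square in the permutation representation graph, but the $0$-edge closing that square need not belong to $F$. What one actually needs is an exchange argument replacing $F$ by a better $2$-fracture graph whose unique cycle \emph{is} an alternating square, and this is precisely the work carried by the intermediate lemmas of \cite{2017CFLM}. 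You also conflate $F$ with the permutation representation graph (the former has exactly two $0$-edges by definition, so it is a $4$-cycle with two pendant paths, not a ladder; recovering the remaining rungs is a separate step), and your description of case~(\ref{c2F2}) as the rails ``closing up onto each other'' is off: the two cases differ only in whether the endmost $0$-rung is present.
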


\subsection{Splits and perfect splits}\label{sec:splits}

Let $G$ be a permutation group of degree $n$, and let $\Gamma=(G,S)$ be an SGGI with $S=\{\rho_0,\ldots,\rho_{r-1}\}$. 
Suppose $\Gamma$ has a fracture graph.

Suppose that, for some $i\in\{0, \ldots, r-1\}$, 
\begin{itemize}
\item $G_i$ has exactly two orbits; observe that under this condition there exists a partition of $\{1,\ldots,n\}$ into two sets $O_1$ and $O_2$, of sizes $n_1$ and $n_2=n - n_1$ such that $\rho_i$ is the unique permutation of $S$ swapping elements in different parts of this partition of  $\{1,\ldots,n\}$; and
\item there is exactly one pair of points $(a,b) \in O_1\times O_2$ such that $a\rho_i=b$.
\end{itemize}
Then, $\{a,b\}$ is called a \textbf{\textit{split}} or \textbf{\textit{$i$-split}} of $\Gamma$. We refer to $i$ as the \textbf{\textit{label}} of a split.

For $j\in \{0,\ldots,r-1\}\setminus\{i\}$, let $\alpha_j$ be the restriction of $\rho_j$ to $O_1$, and let $\beta_j$ be the restriction of $\rho_j$ to $O_2$.
If, up to duality, $\alpha_j = 1$ for each $j\in \{i+1,\ldots, r-1\}$ and
$\beta_j = 1$ for each $j\in \{0,\ldots, i-1\}$,
 then we say that the $i$-split $\{a,b\}$ is \textbf{\emph{perfect}}. The following illustrates the situation when we have a perfect split having label $i$.
\begin{center}
\begin{tikzpicture}[
  vertex/.style={circle, draw, minimum size=50pt, inner sep=0pt}, 
  edge label/.style={above, midway, fill=white, inner sep=1pt, font=\small} 
]

\node[vertex] (GammaLeft) at (0,0) {$\Gamma_{<i}$};
\node[vertex] (GammaRight) [right=of GammaLeft, xshift=1cm] {$\Gamma_{>i}$}; 

\draw (GammaLeft) -- (GammaRight) node[edge label] {$i$};

\end{tikzpicture}
\end{center}

 If $j$ is the label of an edge adjacent to a $i$-split, then $j\in\{i-1, i+1\}$. Hence if  $\Gamma_{<i}$ (resp.  $\Gamma_{>i}$)  is nontrivial, then it must have a pendent edge with label $i-1$ (resp. $i+1$).

\section{SGGIs admitting a $2$-fracture graph}\label{2f}
In Proposition~4.9 of~\cite{2024CFL}, the authors classify string C-groups $\Gamma = (G, S)$ of rank at least $(n-1)/2$ that admit a $2$-fracture graph, where $G$ is a permutation group of degree $n$. Since their focus was on string C-groups, they excluded SGGIs that do not satisfy the intersection property from their classification. However, the proof of Proposition~4.9 in~\cite{2024CFL} inherently provides a classification of SGGIs of a permutation group of degree $n$ with rank at least $(n-1)/2$. Therefore, in the following, we revisit some of the steps in their proof to recover the permutation representation graphs of the SGGIs omitted from their classification.\footnote{That is, SGGIs that are not string C-groups.}

\begin{prop}\label{frac2}
Let $G$ be a transitive permutation group of degree $n$, and let $\Gamma= (G,\{\rho_0, \ldots, \rho_{r-1}\})$ be an  SGGI of rank $r$.
If $\Gamma$ has a $2$-fracture graph, then $r\le n/2$. Moreover,  if   $r\geq (n-1)/2$, then  up to duality $\Gamma$ has a permutation representation graph isomorphic to one of the graphs in Table~$\ref{T2F}$.
\end{prop}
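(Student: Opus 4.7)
The plan is an edge-counting argument followed by case analysis along the lines of the proof of Proposition~4.9 of \cite{2024CFL}, adapted to recover the SGGIs which are not string C-groups.

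First I would establish the bound $r \leq n/2$. Let $F$ be a $2$-fracture graph for $\Gamma$; then $F$ has $n$ vertices and exactly $2r$ edges by construction. By Corollary~\ref{c2F} (in the connected case) and by Proposition~\ref{dis2F} (in the disconnected case, after possibly replacing $F$), I may assume each component of $F$ is either a tree or a unicyclic graph whose unique cycle is an alternating square. Denote by $t$ the number of tree components. Since a tree on $v$ vertices has $v-1$ edges and a unicyclic graph on $v$ vertices has $v$ edges, summing across components gives $2r = n - t$, so $r \leq n/2$, with equality only when $F$ is connected and non-tree. Moreover, if $F$ is disconnected then Proposition~\ref{dis2F} forces $t \geq 1$, hence in that case $r \leq (n-1)/2$.

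For the extremal value $r = n/2$ the above argument forces $F$ to be connected and containing an alternating square, and Corollary~\ref{c2F} delivers exactly the two families of permutation representation graphs to be recorded in Table~\ref{T2F}.

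For $r = (n-1)/2$ the identity $2r = n - t$ forces $t = 1$. Hence either (i) $F$ is a connected tree on $n$ vertices ($n$ necessarily odd), or (ii) $F$ has exactly one tree component together with a nonempty collection of unicyclic components. In each subcase I would follow the argument from the proof of Proposition~4.9 of \cite{2024CFL}: the tree structure dictates that each label $i$ labels an edge joining two distinct $G_i$-orbits, while the commuting relations constrain how cycles of different $\rho_j$'s can interact at the common vertices of $F$. Walking along the tree components and analysing the leaves together with the internal degree-$2$ vertices, one reconstructs the cycle structure of each $\rho_i$ and hence the permutation representation graph, up to duality.

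The main obstacle is that $F$ records only two selected $2$-cycles of each $\rho_i$, whereas the permutation representation graph contains all of them. Additional $2$-cycles contribute further edges that must be compatible with the commuting property and with the transitivity of $G$. In \cite{2024CFL} the intersection property is invoked precisely at the configurations where such extra edges appear, ruling out certain extensions; under the weaker SGGI hypothesis of Proposition~\ref{frac2} some of those extensions become admissible, and it is exactly these new configurations which must be enumerated and added to Table~\ref{T2F} beyond the graphs already produced in \cite[Proposition~4.9]{2024CFL}. The bulk of the work therefore consists in revisiting each branching point in the argument of \cite{2024CFL}, keeping the branches previously discarded by the intersection property, and verifying case-by-case that the resulting graphs still define an SGGI.
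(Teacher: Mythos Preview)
Your proposal is correct and follows essentially the same route as the paper: the edge-count $2r = n - t$ via Proposition~\ref{dis2F} and Corollary~\ref{c2F} to obtain $r\le n/2$, and then the case analysis of \cite[Proposition~4.9]{2024CFL} for $r=(n-1)/2$, retaining the branches previously discarded by the intersection property. The paper organises the $r=(n-1)/2$ analysis by the minimal number $s$ of alternating squares in a chosen $2$-fracture graph (rather than by your connected/disconnected split) and disposes of $n\le 8$ by a separate computer search---this is where the sporadic graphs (12)--(18) of Table~\ref{T2F} come from---but otherwise the strategies coincide.
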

\begin{proof}
Assume first that $n\geq 9$. Let $\mathcal{G}$ be the permutation representation graph of $\Gamma$. By assumption, $\Gamma$ has a 2-fracture graph. If $\Gamma$ admits a connected $2$-fracture graph that is not a tree, then Corollary~\ref{c2F} shows that $n$ is even, $r=n/2$ and $\mathcal{G}$ is one of the graphs described in parts~\eqref{c2F1} and~\eqref{c2F2}. We have reported these two graphs in the first two rows of Table~\ref{T2F}. Therefore, we may suppose that either $\Gamma$ admits no connected $2$-fracture graph, or that each connected $2$-fracture graph is a tree.

In this case, from Proposition~\ref{dis2F}, we deduce that $\Gamma$ admits a 2-fracture graph having at least one component
that is a tree, while all other components are either trees or have at most one cycle.\footnote{We are including here the possibility that a $2$-fracture graph is a tree.} Let $n_1,\ldots,n_\kappa$ be the cardinalities of the connected components of this $2$-fracture graph. From Euler's formula, the number of edges in each connected component is at most $n_j$, and there exists a $j$ such that the number of edges is exactly $n_j-1$. Since the total number of edges in the 2-fracture graph is $2r$, we deduce that
$$2r\le \sum_{i=1}^\kappa n_i-1=n-1.$$
This shows that $r\le n/2$ in all cases, and hence it remains to prove that, if $r\ge (n-1)/2$, then $\mathcal{G}$ is isomorphic to one of the graphs in Table~$\ref{T2F}$.
As $r\ge (n-1)/2$, we deduce that $r=(n-1)/2$.

When $n\leq 8$, the proof in~\cite[Proposition~4.9]{2024CFL} is via a computer search; a similar computer search shows that the possible graphs $\mathcal{G}$ are listed in Table~\ref{T2F}. Therefore, we may suppose $n\ge 9$.

At this point, in the proof of~\cite[Proposition~4.9]{2024CFL}, the authors consider a $2$-fracture graph $\mathcal{F}$ with a minimal number of squares, denoted by $s$. The proof is then divided into two cases: $s > 0$ and $s = 0$. When $s > 0$, following the proof of~\cite[Proposition~4.9]{2024CFL}, we see that the graph $\mathcal{G}$ corresponds to graph (3) in Table~\ref{T2F}.  

Now, suppose $s = 0$, meaning that $\mathcal{F}$ is a tree. If every pair of adjacent edges in $\mathcal{F}$ carries consecutive labels, then, by the proof of~\cite[Proposition~4.9]{2024CFL}, the graph obtained is a linear graph whose sequence of labels is given by:
$$
(0,1,0,1,2,3,2,3,\dots,r-2,r-1,r-2,r-1).$$
The resulting graph $\mathcal{G}$ corresponds to graph (4) in Table~\ref{T2F}.

Finally, suppose that $\mathcal{F}$ contains adjacent edges with nonconsecutive labels. Then, by the proof of~\cite[Proposition~4.9]{2024CFL}, the resulting graphs $\mathcal{G}$ correspond to graphs (5), (6), (7), (8), (9), (10), and (11) in Table~\ref{T2F}.
\end{proof}
\begin{table}[htbp]

\[\begin{array}{|l|}
\hline
(1) n\geq 4:\; \xymatrix@-1.5pc{
*+[o][F]{}\ar@{-}[dd]_0\ar@{-}[rr]^{r-1}&&*+[o][F]{}\ar@{-}[dd]_0\ar@{.}[rr]&&*+[o][F]{}\ar@{-}[rr]^4\ar@{-}[dd]_0&&*+[o][F]{}\ar@{-}[rr]^3\ar@{-}[dd]_0&&*+[o][F]{}\ar@{-}[rr]^2 \ar@{-}[dd]_0 &&*+[o][F]{}   \ar@{-}[dd]^0\ar@{-}[rr]^1&& *+[o][F]{} \ar@{-}[dd]^0\\
&&&&&&&&&&&&&&\\
*+[o][F]{}\ar@{-}[rr]_{r-1}*+[o][F]{}&&*+[o][F]{}\ar@{.}[rr]&&*+[o][F]{}\ar@{-}[rr]_4&&*+[o][F]{}\ar@{-}[rr]_3&&*+[o][F]{}\ar@{-}[rr]_2&&*+[o][F]{} \ar@{-}[rr]_1&& *+[o][F]{} }\\
(2) n\geq 6:\; \xymatrix@-1.5pc{
*+[o][F]{}\ar@{-}[dd]_0\ar@{-}[rr]^{r-1}&&*+[o][F]{}\ar@{-}[dd]_0\ar@{.}[rr]&&*+[o][F]{}\ar@{-}[rr]^5\ar@{-}[dd]_0&&*+[o][F]{}\ar@{-}[rr]^4\ar@{-}[dd]_0&&*+[o][F]{}\ar@{-}[rr]^3\ar@{-}[dd]_0&&*+[o][F]{}\ar@{-}[rr]^2 \ar@{-}[dd]_0 &&*+[o][F]{}   \ar@{-}[dd]^0\ar@{-}[rr]^1&& *+[o][F]{} \\
&&&&&&&&&&&&&&\\
*+[o][F]{}\ar@{-}[rr]_{r-1}*+[o][F]{}&&*+[o][F]{}\ar@{.}[rr]&&*+[o][F]{}\ar@{-}[rr]_5&&*+[o][F]{}\ar@{-}[rr]_4&&*+[o][F]{}\ar@{-}[rr]_3&&*+[o][F]{}\ar@{-}[rr]_2&&*+[o][F]{} \ar@{-}[rr]_1&& *+[o][F]{} }\\

(3) n\geq 7:\;  \xymatrix@-0.8pc{  *+[o][F]{} \ar@{-}[r]^1 & *+[o][F]{}  \ar@{=}[r]^2_0 & *+[o][F]{} \ar@{-}[r]^1 & *+[o][F]{} \ar@{-}[r]^2 & *+[o][F]{} \ar@{.}[r] & *+[o][F]{}  \ar@{-}[r]^{r-4}& *+[o][F]{}  \ar@{-}[r]^{r-3}  & *+[o][F]{}  \ar@{-}[r]^{r-2}  & *+[o][F]{}  \ar@{-}[r]^{r-1}  & *+[o][F]{}   &\\
 & & &*+[o][F]{} \ar@{-}[r]_2  \ar@{-}[u]_0&  *+[o][F]{} \ar@{-}[u]_0 \ar@{.}[r] & *+[o][F]{}  \ar@{-}[r]_{r-4} \ar@{-}[u]_0& *+[o][F]{}  \ar@{-}[r]_{r-3}\ar@{-}[u]_0 &  *+[o][F]{}  \ar@{-}[r]_{r-2}\ar@{-}[u]_0 &*+[o][F]{}  \ar@{-}[r]_{r-1}\ar@{-}[u]_0&  *+[o][F]{} \ar@{-}[u]_0  }  \\

(4) n\geq 5:\; \xymatrix@-0.8pc{  *+[o][F]{}  \ar@{-}[r]^0 & *+[o][F]{}  \ar@{-}[r]^1 & *+[o][F]{}  \ar@{-}[r]^0 & *+[o][F]{}  \ar@{-}[r]^1 &*+[o][F]{}  \ar@{.}[r] &  *+[o][F]{}  \ar@{-}[r]^{r-2} & *+[o][F]{}  \ar@{-}[r]^{r-1}  & *+[o][F]{}  \ar@{-}[r]^{r-2}  & *+[o][F]{}  \ar@{-}[r]^{r-1} & *+[o][F]{} } \\

(5) n\geq 7:\; \xymatrix@-0.45pc{  *+[o][F]{} \ar@{-}[r]^1 & *+[o][F]{}  \ar@{-}[r]^0 & *+[o][F]{} \ar@{-}[r]^1 & *+[o][F]{} \ar@{-}[r]^2 & *+[o][F]{} \ar@{.}[r]   & *+[o][F]{}  \ar@{-}[r]^{r-2}  & *+[o][F]{}  \ar@{-}[r]^{r-1}  & *+[o][F]{}   &\\
 & & &*+[o][F]{} \ar@{-}[r]_2  \ar@{-}[u]_0&  *+[o][F]{} \ar@{-}[u]_0 \ar@{.}[r]  &  *+[o][F]{}  \ar@{-}[r]_{r-2}\ar@{-}[u]_0 &*+[o][F]{}  \ar@{-}[r]_{r-1}\ar@{-}[u]_0&  *+[o][F]{} \ar@{-}[u]_0  } \\

(6) n\geq 7:\;  \xymatrix@-0.8pc{  *+[o][F]{} \ar@{-}[r]^1 & *+[o][F]{}  \ar@{-}[r]^2 & *+[o][F]{} \ar@{-}[r]^1 & *+[o][F]{} \ar@{-}[r]^2 & *+[o][F]{} \ar@{.}[r] & *+[o][F]{}  \ar@{-}[r]^{r-4}& *+[o][F]{}  \ar@{-}[r]^{r-3}  & *+[o][F]{}  \ar@{-}[r]^{r-2}  & *+[o][F]{}  \ar@{-}[r]^{r-1}  & *+[o][F]{}   &\\
 & & &*+[o][F]{} \ar@{-}[r]_2  \ar@{-}[u]_0&  *+[o][F]{} \ar@{-}[u]_0 \ar@{.}[r] & *+[o][F]{}  \ar@{-}[r]_{r-4} \ar@{-}[u]_0& *+[o][F]{}  \ar@{-}[r]_{r-3}\ar@{-}[u]_0 &  *+[o][F]{}  \ar@{-}[r]_{r-2}\ar@{-}[u]_0 &*+[o][F]{}  \ar@{-}[r]_{r-1}\ar@{-}[u]_0&  *+[o][F]{} \ar@{-}[u]_0  } \\

(7) n\geq 9:\; \xymatrix@-0.45pc{ *+[o][F]{}  \ar@{-}[r]^0 & *+[o][F]{} \ar@{-}[r]^1 & *+[o][F]{} \ar@{-}[r]^2 & *+[o][F]{} \ar@{.}[r] & *+[o][F]{}  \ar@{-}[r]^{i-2}   & *+[o][F]{}  \ar@{-}[r]^{i-1}  & *+[o][F]{}  \ar@{-}[r]^i& *+[o][F]{}  \ar@{-}[r]^{i+1}& *+[o][F]{}  \ar@{-}[r]^{i+2}&*+[o][F]{}  \ar@{.}[r]  & *+[o][F]{}  \ar@{-}[r]^{r-1}  & *+[o][F]{}   \\
*+[o][F]{}  \ar@{-}[r]_0 \ar@{-}[u]_i & *+[o][F]{} \ar@{-}[r]_1 \ar@{-}[u]_i & *+[o][F]{} \ar@{-}[r]_2  \ar@{-}[u]_i&  *+[o][F]{} \ar@{-}[u]_i \ar@{.}[r] & *+[o][F]{}  \ar@{-}[r]_{i-2} \ar@{-}[u]_i& *+[o][F]{}   \ar@{-}[u]_i&     &  *+[o][F]{}  \ar@{-}[r]_{i+1}\ar@{-}[u]_{i-1}& *+[o][F]{}  \ar@{-}[r]_{i+2}\ar@{-}[u]_{i-1} &*+[o][F]{} \ar@{.}[r]\ar@{-}[u]_{i-1}&*+[o][F]{}  \ar@{-}[r]_{r-1}\ar@{-}[u]_{i-1}&  *+[o][F]{} \ar@{-}[u]_{i-1} } \\

(8) n\geq 6:\; \xymatrix@-0.48pc{ *+[o][F]{}  \ar@{-}[r]^0& *+[o][F]{}  \ar@{-}[r]^1 &*+[o][F]{}  \ar@{-}[r]^0 & *+[o][F]{} \ar@{-}[r]^1 & *+[o][F]{} \ar@{.}[r]&*+[o][F]{}  \ar@{-}[r]^{i-3} &*+[o][F]{}  \ar@{-}[r]^{i-2} & *+[o][F]{} \ar@{-}[r]^{i-1} & *+[o][F]{}   \ar@{-}[r]^i & *+[o][F]{}  \ar@{-}[r]^{i+1}& *+[o][F]{}  \ar@{.}[r] &  *+[o][F]{}  \ar@{-}[r]^{r-1}  & *+[o][F]{}  \\
&& && && & &*+[o][F]{}  \ar@{-}[r]_i& *+[o][F]{}  \ar@{-}[r]_{i+1} \ar@{-}[u]_{i-1}& *+[o][F]{}  \ar@{.}[r]\ar@{-}[u]_{i-1} & *+[o][F]{}  \ar@{-}[r]_{r-1}\ar@{-}[u]_{i-1}&  *+[o][F]{} \ar@{-}[u]_{i-1}  }\\

(9) n\geq 6:\; \xymatrix@-0.48pc{ *+[o][F]{}  \ar@{-}[r]^0& *+[o][F]{}  \ar@{-}[r]^1 &*+[o][F]{}  \ar@{-}[r]^0 & *+[o][F]{} \ar@{-}[r]^1 & *+[o][F]{} \ar@{.}[r]&*+[o][F]{}  \ar@{-}[r]^{r-5} &*+[o][F]{}  \ar@{-}[r]^{r-4} & *+[o][F]{} \ar@{-}[r]^{r-3} & *+[o][F]{}   \ar@{-}[r]^{r-2} & *+[o][F]{}  \ar@{-}[r]^{r-1}& *+[o][F]{} &\\
&& && && & && *+[o][F]{}  \ar@{-}[r]_{r-1} \ar@{-}[u]_{r-3}& *+[o][F]{}  \ar@{-}[r]_{r-2}\ar@{-}[u]_{r-3}& *+[o][F]{}   } \\

(10) n\geq 6:\; \xymatrix@-0.48pc{ *+[o][F]{}  \ar@{-}[r]^0& *+[o][F]{}  \ar@{-}[r]^1 &*+[o][F]{}  \ar@{-}[r]^0 & *+[o][F]{} \ar@{-}[r]^1 & *+[o][F]{} \ar@{.}[r]&*+[o][F]{}  \ar@{-}[r]^{r-5} &*+[o][F]{}  \ar@{-}[r]^{r-4} & *+[o][F]{} \ar@{-}[r]^{r-3} & *+[o][F]{}   \ar@{-}[r]^{r-2} & *+[o][F]{}  \ar@{-}[r]^{r-1}& *+[o][F]{} \ar@{-}[r]^{r-2} &*+[o][F]{} \\
&& && && & && *+[o][F]{}  \ar@{-}[r]_{r-1} \ar@{-}[u]_{r-3}& *+[o][F]{} \ar@{-}[u]_{r-3}&   }\\

(11) n\geq 7:\;  \xymatrix@-0.45pc{ *+[o][F]{}  \ar@{-}[r]^0 & *+[o][F]{} \ar@{-}[r]^1 & *+[o][F]{} \ar@{-}[r]  \ar@{-}[r]^2 & *+[o][F]{}  \ar@{.}[r] & *+[o][F]{}  \ar@{-}[r]^{r-2}  & *+[o][F]{}  \ar@{-}[r]^{r-1}  & *+[o][F]{}  &\\
&*+[o][F]{}  \ar@{-}[r]_1 & *+[o][F]{} \ar@{-}[u]_0  \ar@{-}[r]_2  & *+[o][F]{}  \ar@{.}[r] \ar@{-}[u]_0 &  *+[o][F]{}  \ar@{-}[r]_{r-2}\ar@{-}[u]_0&*+[o][F]{}  \ar@{-}[r]_{r-1}\ar@{-}[u]_0&  *+[o][F]{} \ar@{-}[u]_0  } \\

(12)\,\xymatrix@-0.8pc{   *+[o][F]{}  \ar@{=}[r]^2_0 & *+[o][F]{} \ar@{-}[r]^1 & *+[o][F]{} \ar@{-}[r]^2 & *+[o][F]{} \\
  & *+[o][F]{} \ar@{-}[r]^1 &*+[o][F]{} \ar@{-}[r]_2  \ar@{-}[u]_0&  *+[o][F]{} \ar@{-}[u]_0 }\quad
   
 (13)\,\xymatrix@-0.8pc{   *+[o][F]{}  \ar@{=}[r]^2_0 & *+[o][F]{} \ar@{-}[r]^1 & *+[o][F]{} \ar@{-}[r]^2 & *+[o][F]{}& \\
  & &*+[o][F]{} \ar@{-}[r]_2  \ar@{-}[u]_0&  *+[o][F]{} \ar@{-}[u]_0\ar@{-}[r]^1 & *+[o][F]{}  }\quad

   (14)\, \xymatrix@-0.8pc{   *+[o][F]{}  \ar@{=}[r]^2_0 & *+[o][F]{} \ar@{-}[r]^1 & *+[o][F]{} \ar@{-}[r]^2 & *+[o][F]{}\ar@{-}[r]^1 & *+[o][F]{}  \\
  & &*+[o][F]{} \ar@{-}[r]_2  \ar@{-}[u]_0&  *+[o][F]{} \ar@{-}[u]_0 &}\\

(15)\, \xymatrix@-0.4pc{   *+[o][F]{}  \ar@{-}[r]^1 & *+[o][F]{}  \ar@{-}[r]^0 & *+[o][F]{}  \ar@{-}[r]^1 & *+[o][F]{}   &\\
   &  *+[o][F]{}  \ar@{-}[r]_0\ar@{-}[u]_2 &*+[o][F]{}  \ar@{-}[r]_1\ar@{-}[u]_2&  *+[o][F]{}   } \quad

  (16)\, \xymatrix@-0.8pc{  *+[o][F]{} \ar@{-}[r]^1 & *+[o][F]{}  \ar@{-}[r]^2 & *+[o][F]{} \ar@{-}[r]^1 & *+[o][F]{} \ar@{-}[r]^2 & *+[o][F]{} \\
 & & &*+[o][F]{} \ar@{-}[r]_2  \ar@{-}[u]_0&  *+[o][F]{} \ar@{-}[u]_0} \\

 (17)\, \xymatrix@-0.45pc{ *+[o][F]{}  \ar@{-}[r]^0 &   *+[o][F]{}  \ar@{-}[r]^1 & *+[o][F]{}  \ar@{-}[r]^2 & *+[o][F]{}   &\\
 &  &  *+[o][F]{}  \ar@{-}[r]_2\ar@{-}[u]_0 &*+[o][F]{}  \ar@{-}[r]_1\ar@{-}[u]_0&  *+[o][F]{} } \quad

 (18)\, \xymatrix@-0.45pc{ *+[o][F]{}  \ar@{-}[r]^0 &   *+[o][F]{}  \ar@{-}[r]^1 & *+[o][F]{}  \ar@{-}[r]^2 & *+[o][F]{}   \ar@{-}[r]^1 & *+[o][F]{} \\
 &  &  *+[o][F]{}  \ar@{-}[r]_2\ar@{-}[u]_0 &*+[o][F]{} \ar@{-}[u]_0& } \quad
 
 (19)\, \xymatrix@-0.45pc{ *+[o][F]{}  \ar@{-}[r]^1 \ar@{-}[d]^3 &   *+[o][F]{}  \ar@{-}[r]^0\ar@{-}[d]^3 & *+[o][F]{} \ar@{-}[d]^3 \\
 *+[o][F]{}  \ar@{-}[r]_1 &   *+[o][F]{} \ar@{-}[d]_2 \ar@{-}[r]^0 & *+[o][F]{}\ar@{-}[d]^2  \\
&    *+[o][F]{}  \ar@{-}[r]_0&*+[o][F]{}} \\
   
\hline
\end{array}\]
\caption{Permutation representation graphs relevant for Proposition~\ref{frac2}.} \label{T2F}
\end{table}

\section{SGGIs having splits: all of which are perfect}\label{sec:4}

\begin{hypothesis}\label{hyp}{\rm
Let $G$ be a permutation group of degree $n$, and let $\Gamma = (G, S)$ be an SGGI of rank $r$ with $S = \{\rho_0, \ldots, \rho_{r-1}\}$. Throughout this section\footnote{With the only exception of Proposition~\ref{split-prim}.}, we assume the following conditions.
\begin{itemize}
    \item $G$ is a transitive subgroup of $\mathrm{Alt}(n)$.
    \item For each $i \in \{0, \ldots, r-1\}$, the subgroup $G_i$ is intransitive. These two conditions are equivalent to the existence of a fracture graph for $\Gamma$.
    \item The SGGI $\Gamma$ does not admit a $2$-fracture graph. Consequently, by  Section~\ref{sec:splits}, $\Gamma$ admits at least one split. Let $s$ denote the number of splits in $\Gamma$, so in particular, $s \geq 1$.
    \item Every split of $\Gamma$ is a perfect split.
\end{itemize}
}
\end{hypothesis}

\begin{lemma}\label{uniquePS}
Assume Hypothesis~$\ref{hyp}$. If  $s=1$, then $r\leq(n-1)/2$. 
\end{lemma}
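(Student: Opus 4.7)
My strategy is to exploit the perfect-split decomposition to reduce the bound on $r$ to bounds on the two smaller SGGIs $\Gamma_{<i}$ and $\Gamma_{>i}$.  Let $\{a,b\}$ be the unique split, with label $i$, $a\in O_1$ and $b\in O_2$, and put $n_j=|O_j|$ for $j=1,2$.  Perfectness forces $\rho_j$ to be trivial on $O_2$ for $j<i$ and on $O_1$ for $j>i$; in particular $G_{<i}$ and $G_{>i}$ are transitive on $O_1$ and $O_2$ respectively, and a direct parity count using $G\leq\Alt(n)$ gives $G_{<i}\leq\Alt(n_1)$ and $G_{>i}\leq\Alt(n_2)$.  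Combining the commuting relations $\rho_i\rho_j=\rho_j\rho_i$ for $|i-j|\geq 2$ with $a\rho_i=b\in O_2$ shows that $a$ is fixed by $\rho_j$ for every $j\leq i-2$; dually $b$ is fixed by $\rho_j$ for every $j\geq i+2$.

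I would then show that $\Gamma_{<i}$ (and dually $\Gamma_{>i}$) admits a $2$-fracture graph.  The $k$-splits of $\Gamma$ with $k<i$ correspond bijectively to the $k$-splits of $\Gamma_{<i}$, so the uniqueness hypothesis forces $\Gamma_{<i}$ to have no splits.  A short orbit-count using the transitivity of $G_{<i}$ then rules out $\rho_k$ ($k<i$) having fewer than two $2$-cycles between distinct $(G_{<i})_k$-orbits: zero crossings would preserve the orbit structure and contradict transitivity of $G_{<i}$, and one crossing would either be a $\Gamma_{<i}$-split (already excluded) or fail to merge enough orbits when $(G_{<i})_k$ has at least three of them.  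Proposition~\ref{frac2} then yields $r_j\leq n_j/2$, where $r_1=i$ and $r_2=r-i-1$.  The equality cases of Corollary~\ref{c2F} (graphs (1)--(2) of Table~\ref{T2F}) are ruled out since every vertex of those graphs is moved by $\rho_0$, whereas $a$ is fixed by $\rho_0,\ldots,\rho_{i-2}$.  Hence $r_j\leq(n_j-1)/2$ and $r=r_1+r_2+1\leq n/2$, which already gives $r\leq(n-1)/2$ when $n$ is odd.

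The main obstacle is the even-$n$ equality case $r=n/2$, which forces $n_1,n_2$ odd, $r_j=(n_j-1)/2$, both $2$-fracture graphs to be spanning trees, and each of $\Gamma_{<i}$ and $\Gamma_{>i}$ to fall into one of the configurations of Table~\ref{T2F}(3)--(18).  In every such configuration the permutation representation graph of $\Gamma_{<i}$ has a second vertex $v\neq a$ of degree one, i.e.\ moved by exactly one generator $\rho_k$ with $k<i$.  I would resolve this by a dichotomy on the action of $\rho_i$ at $v$.  If $\rho_i$ fixes $v$, then $\{v\}$ is a singleton orbit of $G_k$, the remaining points form a single $G_k$-orbit, and $\{v,v\rho_k\}$ is a second split of $\Gamma$, contradicting $s=1$.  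If instead $\rho_i$ moves $v$, the commuting constraints between $\rho_i$ and $\rho_j$ for $|j-i|\geq 2$ force $\rho_i|_{O_1}$ to merge $(G_{<i})_k$-orbits in such a way that $G_k$ becomes transitive on $\{1,\ldots,n\}$, contradicting the intransitivity required by Hypothesis~\ref{hyp}.  Verifying this dichotomy uniformly over Table~\ref{T2F}(3)--(18) is the technical core of the proof, but once complete it excludes $r=n/2$ and gives $r\leq(n-2)/2\leq(n-1)/2$ for even $n$.
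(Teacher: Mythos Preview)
Your overall structure matches the paper's proof closely: decompose along the unique perfect $i$-split, observe that $\Gamma_{<i}$ and $\Gamma_{>i}$ admit $2$-fracture graphs, apply Proposition~\ref{frac2} to get $r_j\le n_j/2$, exclude graphs~(1)--(2) via the pendent vertex $a$ to obtain $r_j\le (n_j-1)/2$, and then rule out the equality case. Your justification that $\Gamma_{<i}$ has a $2$-fracture graph is essentially correct (a $k$-split of $\Gamma_{<i}$ would force, via intransitivity of $G_k$, a second split of $\Gamma$). One minor imprecision: in graph~(2) not every vertex is moved by $\rho_0$, but the two exceptions are moved by $\rho_1$, so the pendent-edge argument still excludes it.

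The genuine gap is in your treatment of the equality case $r=n/2$. Your dichotomy on a second degree-one vertex $v$ does not yield the claimed contradictions. In the ``$\rho_i$ fixes $v$'' branch you assert that the remaining points form a single $G_k$-orbit, but this is false in general: take $\Gamma_{<i}$ to be graph~(4), $v=v_1$, $k=0$, and suppose the extra $2$-cycle of $\rho_i$ lies in $O_2$. Then $(G_{<i})_0$ already has three orbits $\{v_1\},\{v_2,v_3\},\{v_4,\ldots,v_{n_1}\}$ on $O_1$, and $\rho_i$ (acting trivially inside $O_1$) does not merge them; hence $G_0$ has at least three orbits and $\{v,v\rho_0\}$ is \emph{not} a split. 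Your ``$\rho_i$ moves $v$'' branch is too vague to supply a contradiction either; in fact for graph~(4) this branch is vacuous, since commutativity of $\rho_i$ with $\rho_1,\ldots,\rho_{i-2}$ forces $v_1\rho_i$ to be a vertex fixed by all of $\rho_1,\ldots,\rho_{i-2}$, and the only candidates are $v_1$ and $a$, both impossible.

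The paper's argument for the equality case is different and works uniformly. It does not focus on $v$, but on the \emph{extra $2$-cycle of $\rho_i$} guaranteed by evenness. Without loss of generality this extra edge lies in $O_1$. One then checks, for each graph among (4), (8), (9), (10), (11), (17), (18), where an $i$-edge can be added consistently with the commuting relations $\rho_i\rho_j=\rho_j\rho_i$ for $j\le i-2$: for (4), (8), (9), (10) the only possible location is parallel to the $(i-2)$-edge nearest $a$, which produces a second \emph{perfect} split (label $i-2$), contradicting $s=1$; for (11), (17), (18) no placement is possible at all. You should replace the abstract dichotomy with this direct edge-placement analysis.
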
 
\begin{proof}
Let $x$ be the label of the unique split of $\Gamma$ and let $r_1$ and $r_2$ be the ranks of the SGGIs $\Gamma_{<x}$ and $\Gamma_{>x}$, respectively. Let $X_1$ and $X_2$ be  the $G_x$-orbits that are fixed pointwise by $G_{>x}$ and $G_{<x}$, respectively, and let $n_1$ and $n_2$ denote their cardinalities.

Since $\Gamma$ has a unique split and this split is perfect, both $\Gamma_{<x}$ and $\Gamma_{>x}$ admit a 2-fracture graph. Furthermore, the groups $G_{<x}$ and $G_{>x}$ are  transitive of degree $n_1$ and $n_2$, respectively. Thus, by Proposition~\ref{frac2}, we have  
\[
r_1 \leq \frac{n_1}{2} \quad \text{and} \quad r_2 \leq \frac{n_2}{2}.
\]

Up to duality, we may suppose $x \ne r-1$.  Assume also $x\ne 0$. Since a split does not belong to a square, the permutation representation graph of $\Gamma_{<x}$ or of $\Gamma_{>x}$ has a pendent edge labeled $x-1$ or $x+1$, respectively. If $r_1= n_1/2$ or $r_2=n_2/2$, then Proposition~\ref{frac2} shows that the permutation representation graph of $\Gamma_{<x}$ or of $\Gamma_{>x}$ is one of the graphs in the first two lines of Table~\ref{T2F}, however neither of these graphs has a pendent edge of label $x-1$ or $x+1$. Therefore,
\[
r_1 \le\frac{n_1-1}{2} \quad \text{and} \quad r_2\le\frac{n_2-1}{2}.
\]
In particular, to conclude the proof, it suffices to exclude the possibility that $r_1=(n_1-1)/2$ and $r_2=(n_2-1)/2$.
 
 If $r_1= \frac{n_1 - 1}{2}$ and $r_2= \frac{n_2 - 1}{2}$, then, by Proposition~\ref{frac2}, $\Gamma_{<x}$ and $\Gamma_{>x}$ correspond to one of the graphs in Table~\ref{T2F}.
The graphs in Table~\ref{T2F} that contain a pendent edge having label $x-1$ or $x+1$  are (4), (8), (9), (10), (11), (17), and (18). We now consider each of these possibilities  in turn.  In this case-by-case analysis, we must consider each graph in Table~\ref{T2F} with a pendent edge, as well as its dual, by relabeling the set of edge labels using either the sequence \((0, \ldots, x-1)\) or \((x+1, \ldots, r-1)\).

As $G$ consists of even permutations, $\rho_x$ has at least two cycles of length $2$, and hence there is a pair of distinct vertices $(u, v)\in (X_1\times X_1)\cup (X_2\times X_2)$   such that $u\rho_x=v$. Without loss of generality, suppose that $(u, v)\in X_1\times X_1$. If $\Gamma_{<x}$ is represented by the graph (4) with $r_1=x$, then there is only one possibility for the $x$-edge $(u,v)$, which is as follows.
$$\xymatrix@-0.7pc{  *+[o][F]{}  \ar@{-}[r]^0 & *+[o][F]{}  \ar@{-}[r]^1 & *+[o][F]{}  \ar@{-}[r]^0 & *+[o][F]{}  \ar@{-}[r]^1 &*+[o][F]{}  \ar@{.}[r] &  *+[o][F]{}  \ar@{-}[r]^{x-2} & *+[o][F]{}  \ar@{-}[r]^{x-1}  & *+[o][F]{u}  \ar@{=}[r]^{x-2}_x  & *+[o][F]{v}  \ar@{-}[r]^{x-1} & *+[o][F]{} }$$
But then $\Gamma$ has a perfect split with label $x-2$, contradicting that $\Gamma$ has a unique split and that the label of this split is $x$.

Suppose that $\Gamma_{<x}$ is the graph (8) with the relabeling of the edges given by $(0,1,\ldots, r_1-1)\mapsto (x-1,x-2,\ldots,0)$. Then, the $x$-edge $(u,v)$ must be as follows. 
$$\xymatrix@-0.38pc{ *+[o][F]{}  \ar@{-}[r]^{x-1}& *+[o][F]{u}  \ar@{=}[r]^{x-2}_x &*+[o][F]{v}  \ar@{-}[r]^{x-1} & *+[o][F]{} \ar@{-}[r]^{x-2} & *+[o][F]{} \ar@{.}[r]&*+[o][F]{}  \ar@{-}[r]^{i+1} &*+[o][F]{}  \ar@{-}[r]^i & *+[o][F]{} \ar@{-}[r]^{i-1} & *+[o][F]{}   \ar@{-}[r]^{i-2} & *+[o][F]{}  \ar@{-}[r]^{i-3}& *+[o][F]{}  \ar@{.}[r] &  *+[o][F]{}  \ar@{-}[r]^0  & *+[o][F]{}  \\
&& && && & &*+[o][F]{}  \ar@{-}[r]_{i-2}& *+[o][F]{}  \ar@{-}[r]_{i-3} \ar@{-}[u]_{i-1}& *+[o][F]{}  \ar@{.}[r]\ar@{-}[u]_{i-1} & *+[o][F]{}  \ar@{-}[r]_0\ar@{-}[u]_{i-1}&  *+[o][F]{} \ar@{-}[u]_{i-1}  }$$
Then, we get the same contradiction as before. For the graphs (9) and (10), we get an entirely analogous contradiction.

Now, consider the graph (11) with the relabeling $(0,1,\ldots, r_1-1)\mapsto (x-1,x-2,\ldots,0)$. It is not possible to add the $x$-edge $\{u,v\}$ to  this graph without breaking 
the commuting property of the generators. The same happens when we consider graphs (17) and (18).

Consequently, $$r=r_1+r_2+1\leq\frac{n_1+n_2-3}{2}+1=(n-1)/2.$$

If $x=0$, then $\Gamma_{>0}$ has a 2-fracture graph. As before, a case-by-case analysis shows that $\Gamma_{>0}$ cannot be one of the graphs in Table~\ref{T2F}, leading to $r-1=r_2\leq \frac{n_2-2}{2}$.
Since in this case $r_1=0$ and $n_1=1$, it follows that $r_1+r_2\leq\frac{n_1+n_2-3}{2}$, as desired. 
\end{proof}

\begin{prop}\label{1notPS}
Assume Hypothesis~$\ref{hyp}$.
If  $i$ is the label of a perfect split of $\Gamma$, then $i\notin\{1,r-2\}$.
\end{prop}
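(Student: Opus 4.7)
The plan is to exploit the hypothesis $G\leq \mathrm{Alt}(n)$ via a parity argument. First I would reduce the case $i=r-2$ to the case $i=1$ by duality: reversing the order of the string of generators sends a perfect split with label $i$ to a perfect split with label $r-1-i$ in the dual SGGI, and $r-1-(r-2)=1$. So it suffices to rule out $i=1$.

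Suppose for contradiction that $\Gamma$ admits a perfect split $\{a,b\}$ with label $i=1$, and call $O_1, O_2$ the two orbits of $G_1$, with $a\in O_1$ and $b\in O_2$. After fixing the labeling of the two orbits appropriately (this is the role of the ``up to duality'' clause in the definition of perfect split), the perfect-split condition reads: $\rho_0$ fixes $O_2$ pointwise, and each of $\rho_2,\ldots,\rho_{r-1}$ fixes $O_1$ pointwise.

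The crucial step is to bound $|O_1|$. On the one hand $G_1$ acts transitively on $O_1$, since $O_1$ is a $G_1$-orbit. On the other hand, writing $G_1 = \langle \rho_0,\rho_2,\ldots,\rho_{r-1}\rangle$ and restricting to $O_1$, every generator except $\rho_0$ becomes trivial, so $G_1|_{O_1} = \langle \rho_0|_{O_1}\rangle$ is a cyclic group of order at most $2$. Hence $|O_1|\leq 2$. If $|O_1|=1$ then $\rho_0$ fixes all of $O_1\cup O_2 = \{1,\ldots,n\}$, forcing $\rho_0=1$ and contradicting that $\rho_0$ is an involution. If $|O_1|=2$, say $O_1=\{a,a'\}$, then $\rho_0$ must swap $a$ and $a'$ (for transitivity on $O_1$) and fix $O_2$ pointwise, so as a permutation of $\{1,\ldots,n\}$ we have $\rho_0=(a\ a')$, a single transposition. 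This is an odd permutation, contradicting $\rho_0\in G\leq \mathrm{Alt}(n)$.

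The argument is quite short and I do not anticipate a real obstacle; the only delicate point is the bookkeeping of the two ``up to duality'' clauses — once in the outer reduction from $i=r-2$ to $i=1$, and once inside the definition of a perfect split to justify that it is $\rho_0$ (and not $\rho_2,\ldots,\rho_{r-1}$) that acts on $O_1$. Once that is in place, the parity obstruction from $G\leq \mathrm{Alt}(n)$ does all the work.
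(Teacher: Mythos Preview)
Your proposal is correct and follows essentially the same approach as the paper: reduce to $i=1$ by duality, observe that on one $G_1$-orbit only $\rho_0$ acts non-trivially, deduce that $\rho_0$ is a transposition, and obtain a contradiction with $G\leq\mathrm{Alt}(n)$. Your write-up is simply more explicit about why the orbit has size $2$ and about the two duality steps; the paper compresses all of this into two sentences.
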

\begin{proof}
Up to duality, we may suppose $i=1$. 
Let $X$ and $Y$ be the two $G_1$-orbits.
In this case, $\rho_0$ fixes  one of the two $G_1$-orbits pointwise, let it be $Y$, and it is the only permutation acting non-trivially on $X$. This implies that $\rho_0$ is a transposition, a contradiction. 
\end{proof}

We include here a result from~\cite{2024CFL} which does not require Hypothesis~\ref{hyp}.
\begin{prop}[Proposition 5.1, \cite{2024CFL}]\label{split-prim}
If $G$ is transitive and $\Gamma$ has a perfect $i$-split, then $G$ is primitive.
\end{prop}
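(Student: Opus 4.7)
The plan is to argue by contradiction: assume $G$ admits a non-trivial block system $\mathcal{B}$ and use the perfect $i$-split to trap a block into either being the whole set or a singleton.

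First, I would unpack the perfect-split hypothesis into the actions of $G_{<i}$ and $G_{>i}$. Since $G_i=\langle G_{<i},G_{>i}\rangle$ has exactly the two orbits $O_1,O_2$, and the perfectness condition forces $\rho_j$ to fix $O_2$ pointwise for $j<i$ and $\rho_j$ to fix $O_1$ pointwise for $j>i$, it follows that $G_{<i}$ is transitive on $O_1$ and fixes $O_2$ pointwise, while $G_{>i}$ is transitive on $O_2$ and fixes $O_1$ pointwise. I would also record that, by definition of a split, $\rho_i$ has a unique $2$-cycle $(a\,b)$ crossing the partition, and therefore $\rho_i$ preserves the sets $O_1\setminus\{a\}$ and $O_2\setminus\{b\}$ setwise.

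Now let $B,B'\in\mathcal{B}$ be the blocks containing $a,b$ respectively. Since $\rho_i\in G$ sends $a$ to $b$, we have $B\rho_i=B'$, and the argument splits into two cases. If $B=B'$, then $G_{>i}$ fixes $a$ and so stabilizes $B$ setwise; transitivity of $G_{>i}$ on $O_2$ forces $O_2\subseteq B$, and symmetrically $O_1\subseteq B$, so $B=\{1,\ldots,n\}$, contradicting non-triviality. If $B\neq B'$ (so $B\cap B'=\emptyset$), I would first show $B\subseteq O_1$ and $B'\subseteq O_2$: any point $c\in B\cap O_2$ would, via $G_{>i}$-transitivity together with $G_{>i}$ stabilizing $B$, force $O_2\subseteq B$, contradicting $b\notin B$; the inclusion $B'\subseteq O_2$ is symmetric. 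Then $B\rho_i=B'\subseteq O_2$, but by the uniqueness of the crossing $2$-cycle of $\rho_i$, the point $a$ is the only element of $O_1$ sent outside $O_1$ by $\rho_i$. Hence $B=\{a\}$, so the block system consists entirely of singletons, which is again trivial, a contradiction.

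The key observation, and the main obstacle to writing down, is recognizing that the perfectness of the split decouples the two sides $G_{<i}$ and $G_{>i}$ so cleanly that each one individually can be used to "spread" a block across one of the two orbits; this is what drives both cases. The subtler case is $B\neq B'$, where one must first use the one-sided pointwise fixing to confine $B$ inside $O_1$ and then separately invoke the uniqueness of the crossing $2$-cycle to collapse $B$ to $\{a\}$. Beyond that, the argument is a direct orbit-block interaction and requires no additional machinery.
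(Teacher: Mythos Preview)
Your argument is correct. The paper does not actually prove this statement; it merely quotes it from~\cite{2024CFL}, so there is nothing in the present paper to compare your proof against. Your block-versus-orbit dichotomy is exactly the natural way to establish the result: the perfectness of the split gives $G_{<i}$ transitive on $O_1$ and pointwise trivial on $O_2$ (and dually for $G_{>i}$), so any block containing $a$ is $G_{>i}$-invariant and any block containing $b$ is $G_{<i}$-invariant; your two cases then cleanly exhaust the possibilities. The only small remark is that the edge cases $i=0$ and $i=r-1$ (where one of $G_{<i},G_{>i}$ is trivial and the corresponding orbit is a singleton) are handled implicitly by your transitivity statements, since the trivial group is transitive on a one-point set; you might note this explicitly for completeness, but it does not affect correctness.
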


\begin{prop}\label{3PS}
Assume Hypothesis~$\ref{hyp}$. If $i-1,\,i$ and $i+1$ are labels of perfect splits, then either $\Gamma_{<i-1}$ or $\Gamma_{>i+1}$ has a $2$-fracture graph.
\end{prop}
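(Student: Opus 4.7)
The plan is to show that the joint hypothesis of three consecutive perfect splits at labels $i-1$, $i$, $i+1$ is incompatible with the alternating-group condition in Hypothesis~\ref{hyp}, so that the disjunctive conclusion of the proposition follows vacuously. To set this up, I would fix canonical orientations of the three perfect splits: write $Y_1,Y_2$, $X_1,X_2$, and $Z_1,Z_2$ for the orbit partitions of the perfect $(i-1)$-, $i$-, and $(i+1)$-splits, chosen so that $G_{<i-1}$ acts transitively on $Y_1$, $G_{<i}$ on $X_1$, and $G_{<i+1}$ on $Z_1$. Let $\{c,d\}$, $\{a,b\}$, $\{e,f\}$ denote the corresponding unique crossing pairs, with $c\in Y_1$, $a\in X_1$, $e\in Z_1$. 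From $G_{<i-1}\subseteq G_{<i}\subseteq G_{<i+1}$ and the pointwise-fixed-set characterisation of the perfect-split orbits, one obtains the nested inclusions $Y_1\subseteq X_1\subseteq Z_1$ and, dually, $Z_2\subseteq X_2\subseteq Y_2$.

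The crucial combinatorial step, which I expect to be the main obstacle, is to show that $X_1\setminus Y_1=\{d\}$. The generators $\rho_j$ with $j<i-1$ fix $Y_2$ pointwise, so they fix $X_1\cap Y_2$ pointwise; hence the only generator of $G_{<i}$ capable of moving a point of $X_1\cap Y_2$ is $\rho_{i-1}$. Since the unique cross-orbit $2$-cycle of $\rho_{i-1}$ is $\{c,d\}$, its remaining $2$-cycles are internal to $Y_1$ or internal to $Y_2$, so from $Y_1$ one can enter $X_1\cap Y_2$ only via $c\mapsto d$, and $\rho_{i-1}$ then returns $d$ to $c$. Therefore the $G_{<i}$-orbit of $c$ is contained in $Y_1\cup\{d\}$, and transitivity of $G_{<i}$ on $X_1$ forces $X_1\setminus Y_1=\{d\}$. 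The symmetric argument applied to $\Gamma_{>i}$ acting transitively on $X_2$ yields $X_2\setminus Z_2=\{e\}$.

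To conclude, observe that $\rho_i\in G_{>i-1}$ and $\rho_i\in G_{<i+1}$, so by the perfect-split conditions $\rho_i$ fixes $Y_1$ and $Z_2$ pointwise. Its support therefore lies in $Y_2\cap Z_1=(X_1\setminus Y_1)\cup(X_2\setminus Z_2)=\{d,e\}$. As $\rho_i$ is a non-trivial involution, we must have $\rho_i=(d,e)$, a transposition, which is odd and hence cannot belong to $\Alt(n)$. This contradicts $G\leq\Alt(n)$ in Hypothesis~\ref{hyp}; the three-perfect-split hypothesis is therefore vacuous and the disjunctive conclusion holds trivially.
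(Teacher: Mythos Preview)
Your proof is correct, and in fact it establishes something stronger than the proposition as stated: under Hypothesis~\ref{hyp}, three consecutive perfect-split labels cannot occur at all, so the disjunctive conclusion holds vacuously. The argument is clean: you pin down the nested orbit structure, show that $X_1\setminus Y_1$ and $X_2\setminus Z_2$ are singletons, and conclude that the support of $\rho_i$ has exactly two elements, forcing $\rho_i$ to be a transposition---impossible in $\Alt(n)$.

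The paper's route is different. It sets $X$ to be the $G_{i-1}$-orbit fixed pointwise by $G_{>i-1}$ (your $Y_1$) and $Y$ the $G_{i+1}$-orbit fixed pointwise by $G_{<i+1}$ (your $Z_2$), so that the complement of $X\cup Y$ consists of just two points. It then asserts that, since $\rho_i$ is even, $\rho_i$ must act non-trivially on $X$ or on $Y$; assuming the former, it uses that $\rho_i$ centralises $G_{<i-1}$ to conclude that $G_{<i-1}$ is imprimitive, whence by Proposition~\ref{split-prim} the SGGI $\Gamma_{<i-1}$ has no perfect splits and therefore admits a $2$-fracture graph.

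Your analysis actually exposes a tension in the paper's line: since $\rho_i\in G_{>i-1}$ and $\rho_i\in G_{<i+1}$, the perfect-split hypotheses already force $\rho_i$ to fix both $X$ and $Y$ pointwise, so the case the paper pursues cannot arise. The parity observation the paper invokes and your contradiction are the same phenomenon; you simply follow it to its logical end (vacuity of the hypothesis) rather than treating it as the start of a case analysis. Either way the proposition stands, but your version is sharper and renders Corollary~\ref{4PS} an immediate consequence.
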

\begin{proof}
Let $X$ be the $G_{i-1}$-orbit that is fixed by $G_{>i-1}$ pointwise and let $Y$ be the $G_{i+1}$-orbit that is fixed by $G_{<i+1}$ pointwise.
As $G$ consists of even permutations, $\rho_i$ acts non-trivially in one of the two sets $X$ or $Y$. 

\begin{center}

\begin{tikzpicture}[
  vertex/.style={circle, draw, minimum size=30pt, inner sep=0pt}, 
  point/.style={circle, fill, inner sep=1.5pt}, 
  edge label/.style={above, midway, fill=white, inner sep=1pt, font=\small} 
]
\node[vertex] (X) at (0,0) {$X$};
\node[point] (P1) [right=of X] {};
\node[point] (P2) [right=of P1] {};
\node[vertex] (Y) [right=of P2] {$Y$};

\draw (X) -- (P1) node[edge label] {$i-1$};
\draw (P1) -- (P2) node[edge label] {$i$};
\draw (P2) -- (Y) node[edge label] {$i+1$};
\end{tikzpicture}
\end{center}

Up to duality, we may suppose that  $\rho_i$ acts non-trivially on $X$. Then, as it commutes with all the elements of $G_{<i-1}$, $G_{<i-1}$ is imprimitive. Hence, by Proposition~\ref{split-prim}, $\Gamma_{<i-1}$ has no perfect splits. As all splits of $\Gamma$ are perfect, $\Gamma_{<i-1}$ has a 2-fracture graph.
\end{proof}

\begin{coro}\label{4PS}
Assume Hypothesis~$\ref{hyp}$.
Then $\Gamma$ cannot have four consecutive perfect splits.
\end{coro}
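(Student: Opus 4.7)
\emph{Plan.} Suppose for contradiction that $\Gamma$ has four consecutive perfect splits at labels $i-1, i, i+1, i+2$. The plan is to apply Proposition~\ref{3PS} to the two sub-triples of three consecutive perfect splits contained in this quadruple---namely $(i-1,i,i+1)$ and $(i,i+1,i+2)$---and to rule out the ``inner'' options in each application using the perfect-split structure at the boundary.

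First, I apply Proposition~\ref{3PS} to the triple $(i,i+1,i+2)$: either $\Gamma_{<i}$ or $\Gamma_{>i+2}$ admits a $2$-fracture graph. I claim that $\Gamma_{<i}$ cannot. Indeed, by the perfect $(i-1)$-split, $G_{<i}$ acts transitively on a set of the form $X\cup\{a\}$, where $X$ is the $G_{<i-1}$-orbit arising in the perfect split and $a$ is the unique point outside $X$ moved by $\rho_{i-1}$'s cross edge. The subgroup $(G_{<i})_{i-1}=G_{<i-1}$ has precisely the two orbits $X$ and $\{a\}$ on this domain, and the only cycle of length $2$ of $\rho_{i-1}$ between them is the split edge $\{b,a\}$ itself (the remaining transpositions of $\rho_{i-1}$ lie inside $X$). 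Hence the $2$-fracture condition fails at label $i-1$, and $\Gamma_{<i}$ cannot admit a $2$-fracture graph. It follows that $\Gamma_{>i+2}$ does. Symmetrically, applying Proposition~\ref{3PS} to $(i-1,i,i+1)$ and ruling out $\Gamma_{>i+1}$ by the same argument using the perfect $(i+2)$-split, I obtain that $\Gamma_{<i-1}$ also admits a $2$-fracture graph.

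The last step---deriving a contradiction from the coexistence of $2$-fracture graphs for $\Gamma_{<i-1}$ and $\Gamma_{>i+2}$ together with the four perfect splits in the middle of $\Gamma$---is the main obstacle. I expect the contradiction to come either from piecing together the two $2$-fracture graphs via the cross-edges of the perfect splits and thereby promoting them to a $2$-fracture graph for $\Gamma$ itself (contradicting Hypothesis~\ref{hyp}), or from exploiting the imprimitivity of $G_{<i-1}$ and of $G_{>i+2}$ extracted inside the proof of Proposition~\ref{3PS} to contradict the primitivity of $G$ guaranteed by Proposition~\ref{split-prim} applied to any of the four perfect splits.
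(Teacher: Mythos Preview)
Your proposal has a genuine gap at the final step, and neither of the two routes you sketch will close it.

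Your work up through the two key observations---that $\Gamma_{<i}$ fails to admit a $2$-fracture graph (because it carries the $(i-1)$-split) and that $\Gamma_{>i+1}$ fails to admit one (because it carries the $(i+2)$-split)---is correct and matches the paper exactly. The paper's entire proof consists of precisely these two observations, followed by the assertion that they contradict Proposition~\ref{3PS}. You have the right ingredients; the divergence is that you then push \emph{past} this point, drawing the further conclusions that $\Gamma_{>i+2}$ and $\Gamma_{<i-1}$ \emph{do} admit $2$-fracture graphs, and look for a contradiction there. That is the wrong place to look.

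Your first proposed route---gluing the $2$-fracture graphs of $\Gamma_{<i-1}$ and $\Gamma_{>i+2}$ into one for $\Gamma$---cannot succeed: the four labels $i-1,i,i+1,i+2$ are splits of $\Gamma$, so each of $\rho_{i-1},\rho_i,\rho_{i+1},\rho_{i+2}$ has exactly one crossing edge, and no second edge is available at those labels. Hypothesis~\ref{hyp} is fully consistent with $\Gamma_{<i-1}$ and $\Gamma_{>i+2}$ both having $2$-fracture graphs. Your second route also fails: the imprimitivity of $G_{<i-1}$ is on its own domain $X$, while Proposition~\ref{split-prim} concerns the primitivity of $G$ on $\{1,\dots,n\}$; these are different groups on different sets, and there is no tension between the two statements.

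In short, the paper does not attempt to extract a contradiction from the coexistence of $2$-fracture graphs on the two outermost pieces; it stops at the failure of the two \emph{inner} pieces $\Gamma_{<i}$ and $\Gamma_{>i+1}$ to have $2$-fracture graphs and invokes Proposition~\ref{3PS} directly. Your argument should terminate at the same point rather than continuing to conclusions that, while true, do not lead anywhere.
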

\begin{proof}
If $i-1,\,i,\, i+1,\,i+2$ are labels of perfect splits, then neither $\Gamma_{>i+1}$ nor $\Gamma_{<i}$ has a $2$-fracture graph, contradicting Proposition~\ref{3PS}.
\end{proof}

\begin{prop}\label{maxS}
Assume Hypothesis~$\ref{hyp}$. Then $s\leq 2r/3$.
\end{prop}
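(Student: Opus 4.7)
The plan is to combine the combinatorial restrictions on the set
\[
P=\{i\in\{0,\dots,r-1\}:i\text{ is the label of a perfect split of }\Gamma\}
\]
provided by Propositions~\ref{1notPS},~\ref{3PS} and Corollary~\ref{4PS}. Note that $|P|=s$ and $\{1,r-2\}\cap P=\emptyset$ by Proposition~\ref{1notPS}. I would partition $\{0,\dots,r-1\}$ into maximal runs of consecutive indices belonging (respectively not belonging) to $P$; by Corollary~\ref{4PS} every split-run has length at most $3$, and it remains to show that the longer runs are heavily constrained.

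The heart of the argument is controlling split-runs of length exactly $3$. Suppose such a run occurs at $(i-1,i,i+1)\subseteq P$. Proposition~\ref{3PS} produces a $2$-fracture graph for either $\Gamma_{<i-1}$ or $\Gamma_{>i+1}$; up to duality assume it is $\Gamma_{<i-1}$. Under Hypothesis~\ref{hyp} every split of $\Gamma$ is perfect, and I would exploit the nesting of orbit partitions of perfect splits—namely that for perfect splits $j<i-1$ one has $O_{1}(j)\subseteq O_{1}(i-1)$, with $G_{<j}$ transitive on $O_{1}(j)$ and $G_{<i-1}$ fixing $O_{2}(i-1)$ pointwise—to argue that a perfect split of $\Gamma$ at any $j<i-1$ would induce a genuine split of the sub-SGGI $\Gamma_{<i-1}$. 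Since a split precludes a $2$-fracture graph at the corresponding position, no perfect split of $\Gamma$ can occur in $\{0,\dots,i-2\}$. Symmetrically if the $2$-fracture graph is that of $\Gamma_{>i+1}$, no perfect split occurs in $\{i+2,\dots,r-1\}$. Consequently at most two length-$3$ runs can exist and they are forced to the two ends of $\{0,\dots,r-1\}$: a hypothetical third such run would require both its flanks to be free of splits, but each flank then contains one of the other two length-$3$ runs—a contradiction.

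Once this structure is established, the final counting is elementary. In the interior, every split-run has length at most $2$ and successive split-runs are separated by at least one non-split, so the ratio of splits to positions in the interior is at most $2/3$, attained by the pattern $SSN\,SSN\cdots$. A length-$3$ end-block together with its entirely empty flank (which has at least two positions, since Proposition~\ref{1notPS} forces position $1$ or $r-2$ to be a non-split) contributes a ratio of at most $3/5$. Summing contributions over the runs gives $s\le 2r/3$.

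The main obstacle is the orbit-chasing in the central step: concretely, showing that the $(G_{<i-1})_{j}$-orbits on $O_{1}(i-1)$ are precisely $O_{1}(j)$ and its complement, so that a perfect split of $\Gamma$ at $j<i-1$ descends to a split of $\Gamma_{<i-1}$. The key ingredients are the transitivity of $G_{<j}$ on $O_{1}(j)$ (ensuring $O_{1}(j)$ remains a single orbit), and the transitivity of $\langle\rho_{j+1},\dots,\rho_{i-2}\rangle$ on $O_{1}(i-1)\setminus O_{1}(j)$, which follows from the transitivity of $G_{<i-1}$ on $O_{1}(i-1)$ combined with the fact that $\rho_{j}$ contributes only one cross-cycle between $O_{1}(j)$ and $O_{1}(i-1)\setminus O_{1}(j)$ inside the domain~$O_{1}(i-1)$.
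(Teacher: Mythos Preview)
Your overall architecture is exactly the paper's: encode the perfect-split labels as a $0$--$1$ string, use Proposition~\ref{1notPS} to force zeros at positions $1$ and $r-2$, use Corollary~\ref{4PS} to cap split-runs at length $3$, and use Proposition~\ref{3PS} to push any length-$3$ run to an end. The paper then simply writes down the extremal sequences (e.g.\ $(1,0,1,1,0,\dots,1,1,0,1)$ in the no-triple case, and $(0,0,1,1,1,0,1,1,0,\dots)$ in the triple case) and reads off $s\le 2r/3$, whereas you phrase the same count in terms of local densities; these are equivalent.

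Your orbit-chasing justification of the central step, however, has a gap. You claim that $\langle\rho_{j+1},\dots,\rho_{i-2}\rangle$ is transitive on $O_1(i-1)\setminus O_1(j)$ because $\rho_j$ contributes only one cross-cycle between $O_1(j)$ and its complement. But the definition of a split constrains only the cycles of $\rho_j$ that cross between $O_1(j)$ and $O_2(j)$; nothing prevents $\rho_j$ from having further $2$-cycles lying entirely inside $O_2(j)\cap O_1(i-1)=O_1(i-1)\setminus O_1(j)$, and those cycles may be precisely what connect the $\langle\rho_{j+1},\dots,\rho_{i-2}\rangle$-orbits there. In that situation $(G_{<i-1})_j$ has more than two orbits and $\rho_j$ has several orbit-crossing cycles in $\Gamma_{<i-1}$, so your contradiction with the $2$-fracture graph does not fire. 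The paper sidesteps this by citing Proposition~\ref{3PS} directly for the conclusion $u_k=0$ for $k<i-1$; the underlying mechanism (visible in the proof of Proposition~\ref{3PS}) is not the $2$-fracture graph per se but the imprimitivity of $G_{<i-1}$ combined with Proposition~\ref{split-prim}, and making your descent argument watertight really requires going through that imprimitivity rather than through a raw cross-cycle count.
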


\begin{proof}
Let $u=(u_i)_{i=0}^{r-1}$ be the sequence defined by  
\[
u_i = \begin{cases}  
1, & \text{if } i \text{ is the label of a perfect split;} \\  
0, & \text{otherwise.}  
\end{cases}
\]
Thus, $s$ is the number of ones in the sequence $u$.  

By Proposition~\ref{1notPS},  an edge with label $1$ or $r-2$ cannot be a perfect split, meaning that $u_1 = u_{r-2} = 0$). Furthermore, by Corollary~\ref{4PS}, the sequence $u$ cannot contain four consecutive ones.

Suppose first that $u$ does not contain three consecutive ones.  
By Proposition~\ref{1notPS}, the following sequence maximizes the number of ones:  
\[
(1,0, \underbrace{1,1,0,\ldots, 1,1,0}_{1,1,0 \text{ is repeated } k \text{ times}}, \underbrace{1,\ldots,1}_{x \text{ times}},0,1),
\]
where  
\[
x =  
\begin{cases}  
2, & \text{if } r \equiv 0 \pmod{3}, \\  
0, & \text{if } r \equiv 1 \pmod{3}, \\  
1, & \text{if } r \equiv 2 \pmod{3}.  
\end{cases}
\]
In all cases, we obtain $s \leq \frac{2r}{3}$.  

Now suppose that $u$ contains a block of three consecutive ones, say at positions $i-1, i, i+1$.  
By Proposition~\ref{3PS}, up to duality, we may assume that $u_k = 0$ for all $k < i-1$.  
If there are no other triples of consecutive ones, then the following sequence maximizes the number of ones:  
\[
(0,0,1,1,1,0, \underbrace{1,1,0,\ldots, 1,1,0}_{1,1,0 \text{ is repeated } k \text{ times}}, \underbrace{1,\ldots,1}_{x \text{ times}},0,1),
\]
where again $x$ depends on $r \pmod{3}$. As shown earlier, this gives $s < \frac{2r}{3}$.  

If there is a second triple of consecutive ones, then by Proposition~\ref{3PS} $u$ must be constantly zero after this triple.  
Thus, as before, we conclude that $s < \frac{2r}{3}$.
\end{proof}

We are finally ready to prove the main result of this section.
\begin{prop}\label{casePS}
Assume Hypothesis~$\ref{hyp}$. Then, $r\leq (2n+s-2)/4$ and $r\leq \lfloor 3(n-1)/5\rfloor$.
\end{prop}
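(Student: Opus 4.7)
The plan is to use the $s$ perfect splits $i_1<i_2<\cdots<i_s$ to partition $\{1,\ldots,n\}$ into $s+1$ consecutive pieces $P_0,P_1,\ldots,P_s$ of sizes $n_0,n_1,\ldots,n_s$ (with $\sum n_j=n$), where $P_j$ is the transitive orbit of the sub-SGGI $\Gamma_j=(G_j,\{\rho_{i_j+1},\ldots,\rho_{i_{j+1}-1}\})$ of rank $r_j$ (using the conventions $i_0=-1$, $i_{s+1}=r$). We then have $r=s+\sum r_j$, and because every block-$j$ generator is an even permutation supported on $P_j$ each $G_j$ sits inside $\Alt(n_j)$. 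The aim is to bound each $r_j$ using Proposition~\ref{frac2} refined by the pendant-edge / new-perfect-split mechanism of Lemma~\ref{uniquePS}, and then combine with Proposition~\ref{maxS}.

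The first point to verify is that every $\Gamma_j$ admits a $2$-fracture graph. A would-be split of $\Gamma_j$ at a label $k\in(i_j,i_{j+1})$ partitions $P_j$ into two $(G_j)_k$-orbits $P_j^1,P_j^2$. Analyzing the $G_k$-orbits on $\{1,\ldots,n\}$ yields a dichotomy: either $\rho_{i_j}$ or $\rho_{i_{j+1}}$ has an internal swap merging $P_j^1$ and $P_j^2$, in which case $G_k$ is transitive (contradicting Hypothesis~\ref{hyp}), or no such merging occurs, in which case $G_k$ has two orbits on $\{1,\ldots,n\}$ and $\rho_k$ has a unique crossing, so $k$ is a split of $\Gamma$, contradicting the enumeration $\{i_1,\ldots,i_s\}$. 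Hence $\Gamma_j$ has no split and admits a $2$-fracture graph.

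The core step is to refine the bound $r_j\le n_j/2$ coming from Proposition~\ref{frac2}. The commuting relations $(\rho_{i_k}\rho_l)^2=1$ between a boundary split $\rho_{i_k}$ and the non-adjacent generators of $\Gamma_j$, together with the non-square property of perfect splits, force the piece-side endpoint of the split to be fixed by those generators, hence to lie on a pendant edge of a specific label in the permutation representation graph of $\Gamma_j$. This immediately eliminates the ladder graphs (1)--(2) of Table~\ref{T2F}. Among the sub-extremal graphs of Table~\ref{T2F} that do carry the required pendant (namely (4), (8)--(11), (17), (18)), the new-perfect-split argument used in Lemma~\ref{uniquePS} shows that attaching a boundary split to such a configuration produces a fresh perfect split of $\Gamma$ at a label outside $\{i_1,\ldots,i_s\}$ (for $s=1$ this is the uniqueness contradiction of Lemma~\ref{uniquePS}; for $s\ge 2$ it contradicts the enumeration). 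The configuration is thus ruled out whenever both ends of $\Gamma_j$ are adjacent to a split, yielding
\[
r_j\le\frac{n_j-1}{2}\ \text{for end pieces}\ (j\in\{0,s\}),\qquad r_j\le\frac{n_j-2}{2}\ \text{for middle pieces}.
\]
Summing, for $s\ge 2$,
\[
r=s+\sum_{j=0}^s r_j\le s+\frac{n_0-1}{2}+\frac{n_s-1}{2}+\sum_{j=1}^{s-1}\frac{n_j-2}{2}=\frac{n}{2}\le\frac{2n+s-2}{4},
\]
and for $s=1$ the bound is exactly Lemma~\ref{uniquePS}.

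For the second bound, combine $r\le(2n+s-2)/4$ with Proposition~\ref{maxS} ($s\le 2r/3$):
\[
4r\le 2n+s-2\le 2n+\tfrac{2r}{3}-2\qquad\Longrightarrow\qquad\tfrac{10r}{3}\le 2(n-1)\qquad\Longrightarrow\qquad r\le\tfrac{3(n-1)}{5},
\]
and since $r\in\mathbb{Z}$, $r\le\lfloor 3(n-1)/5\rfloor$. The main obstacle is the per-piece refinement: systematically checking the graphs of Table~\ref{T2F} to see which can arise as $\Gamma_j$ compatibly with one (end piece) or two (middle piece) boundary perfect splits, extending the case analysis carried out in the proof of Lemma~\ref{uniquePS}.
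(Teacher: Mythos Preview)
Your overall strategy matches the paper's: decompose along the perfect splits, bound each piece via Proposition~\ref{frac2}, sum, and then feed in Proposition~\ref{maxS}. However, the per-piece analysis contains two genuine gaps.

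First, you assume that each $\Gamma_j$ acts transitively on its piece $P_j$. This is automatic for the two end pieces, but it can fail for a middle piece $j\in\{1,\ldots,s-1\}$: the generator $\rho_{i_j}$ (or $\rho_{i_{j+1}}$) may have internal $2$-cycles inside $X_j$, and these may be required in order to connect the orbits of $G_{\{i_j+1,\ldots,i_{j+1}-1\}}$ on $X_j$. The paper treats this case separately by adjoining $\rho_{i_j}$ and applying Proposition~\ref{frac2} to the enlarged transitive group on $n_j+1$ points, obtaining $r_j\le (n_j-2)/2$ directly in that situation.

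Second, and more seriously, for a \emph{transitive} middle piece you assert $r_j\le (n_j-2)/2$, claiming that the ``new perfect split'' mechanism of Lemma~\ref{uniquePS} rules out every sub-extremal graph of Table~\ref{T2F}. But a transitive middle piece can perfectly well be graph~(4), which carries the two required pendant edges at the extreme labels and realises $r_j=(n_j-1)/2$; the extra $2$-cycle of $\rho_{i_{j+1}}$ that forces the new split in Lemma~\ref{uniquePS} need not lie inside $X_j$ at all (it can lie in $X_{j+1}$), so there is no contradiction with the enumeration of splits. The paper does \emph{not} prove your piecewise bound. Instead it pairs up consecutive \emph{transitive} pieces $G^{(k)},G^{(k+1)}$, applies Lemma~\ref{uniquePS} to the combined SGGI on $X_k\cup X_{k+1}$ to get $r_k+r_{k+1}\le (n_k+n_{k+1}-3)/2$, and concludes that at most $\lceil(s+1)/2\rceil$ of the $r_k$ can attain $(n_k-1)/2$. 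This yields the weaker inequality $r\le (2n+s-2)/4$, which is what is actually stated, and which suffices once combined with $s\le 2r/3$.

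In short: your summation gives $r\le n/2$, which is \emph{stronger} than the paper's $(2n+s-2)/4$ for $s\ge 3$, but it rests on an individual middle-piece bound that you have not established and that the paper deliberately avoids claiming. Replace your per-piece bound by the pairing argument (and handle intransitive middle pieces as a separate case) and the proof goes through.
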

\begin{proof}
Let $\{i_k \mid k = 1, \ldots, s\}$ be the set of labels of the perfect splits of $\Gamma$ with $i_1<\cdots<i_s$.
Now, let $X_0$ be the set of points fixed by $G_{>i_1}$, let $X_s$ be the set of points fixed by $G_{<i_s}$, and for $k \in \{1, \ldots, s-1\}$, let $X_k$ be the set of points fixed by both $G_{>i_{k+1}}$ and $G_{<i_k}$.  
Additionally, let $r_0 = i_1$, which is the rank of $G_{<i_1}$, $r_s = r - i_s - 1$, the rank of $G_{>i_s}$, and, for $k \in \{1, \ldots, s-1\}$, let $r_k = i_{k+1} - i_{k} - 1$, the rank of $G_{\{i_k+1, \ldots, i_{k+1}-1\}}$. Finally, let $n_k = |X_k|$ for $k \in \{0, \ldots, s\}$. The following diagram may help illustrate our argument.
\begin{center}
\begin{tikzpicture}
    \tikzstyle{mynode} = [draw, circle, minimum size=12mm, inner sep=0pt]

    \node[mynode] (X1) at (0, 0) {$X_0$};
    \node[mynode] (X2) at (2.5, 0) {$X_1$};
    \node[mynode] (X3) at (5, 0) {$X_2$};
    \node[mynode] (Xs-1) at (7.5, 0) {$X_{s-1}$};
    \node[mynode] (Xs) at (10, 0) {$X_s$};

    \draw (X1) -- (X2) 
        node[midway, above=2mm] {\textit{$i_1$}};
    \draw (X2) -- (X3) 
        node[midway, above=2mm] {\textit{$i_2$}};
    \draw[dashed] (X3) -- (Xs-1); 
    \draw (Xs-1) -- (Xs) 
        node[midway, above=2mm] {\textit{$i_s$}};
\end{tikzpicture}
\end{center}

By construction, $G_{>i_s}$ and $G_{<i_1}$ are either trivial or transitive SGGIs having a 2-fracture graph.
Then, by Proposition~\ref{frac2},   $r_0\leq n_0/2$ and $r_s\leq n_s/2$. 
If $r_0= n_0/2$, then, from Proposition~\ref{frac2}, we deduce that  $\Gamma_{<i_1}$ has  permutation representation graph corresponding to graphs (1) or (2) in Table~\ref{T2F}. However, the split with label $i_1$ must be incident to a pendent edge with label $i_1-1$. Since neither (1) nor (2) has such an edge, we deduce  $r_0\leq  (n_0-1)/2$.  Similarly,  $r_s\leq (n_s-1)/2$.

Consider the SGGI arising from the permutation group $G^{(k)}=G_{\{i_k+1,\ldots,i_{k+1}-1\}}$ acting on $X_k$, for each $k\in \{1,\ldots s-1\}$. 
Assume that $G^{(k)}$ is intransitive. Observe that this happens when two components of the permutation representation graph of $$\Gamma_{i_k+1,\ldots,i_{k+1}-1}$$ are connected by an edge with label $i_k$ and an edge with label $i_{k+1}$. In this case, the group  $G_{\{i_k,\ldots,i_{k+1}-1\}}$\footnote{which has one more generator}  acts transitively on $n_k+1$ points and has a 2-fracture graph. Moreover, this graph has two pendent edges, one with label $i_{k}$ and another with label $i_{k+1}-1$. The latter is the edge incident to the split having label $i_{k+1}$. Hence, the permutation representation graph of $\Gamma_{\{i_k,\ldots,i_{k+1}-1\}}$ is neither graph (1) nor graph (2)
of Table~\ref{T2F}. We conclude  that  the rank of  $G_{\{i_k,\ldots,i_{k+1}-1\}}$, which is $r_k+1$, is less than $(n_k+1)/2$. Thus $r_k+1\leq \frac{(n_k+1)-1}{2}$, which gives $r_k\leq \frac{n_k-2}{2}$.

Let us also denote $G_{<i_1}$ and $G_{>i_s}$ by  $G^{(0)}$ and $G^{(s)}$. Let $k\in \{0,\ldots,s-1\}$.
Suppose that $G^{(k)}$ and  $G^{(k+1)}$ are both transitive and let $S_k$ and $S_{k+1}$ be the ordered generating sets of $G^{(k)}$ and  $G^{(k+1)}$, respectively.
Consider the SGGI $\Phi=(F, S_k\cup\{\rho_k\}\cup S_{k+1})$, where $F=\langle S_k,\rho_k,S_{k+1}\rangle$. Observe that $F$ is a permutation group of degree $n_k+n_{k+1}$.
By construction, this SGGI has exactly one split and this split is perfect.
Hence, by Lemma~\ref{uniquePS} applied to $\Phi$, $r_k+r_{k+1}\leq (n_k+n_{k+1}-3)/2$. 

The previous two paragraphs show that at most $\lceil (s+1)/2\rceil$ ranks $r_k$ of the set $\{r_0,\ldots, r_s\}$ attain the upper bound $(n_k-1)/2$. 
Consequently, 
\begin{align*}
r&=r_0+\cdots+r_s+s\leq \frac{(n_0+\cdots+n_s)-(s+1)-s/2}{2}+s\\
&= \frac{n-s-s/2-1}{2}+s. 
\end{align*}
Thus $r\leq \frac{2n+s-2}{4}$.

Now, the rest of the proof follows from Proposition~\ref{maxS}.
\end{proof}


\section{SGGIs having non-perfect splits}\label{sec:5}
This section is the core of our argument for proving Theorem~\ref{main} and, as in Section~\ref{sec:4}, we borrow some of the ideas from~\cite{2017CFLM,2024CFL}.

\subsection{General notation and basic results}\label{sec:generalnotation}
\begin{hypothesis}\label{hypNP}{\rm
Let $G$ be a permutation group of degree $n$, and let $\Gamma = (G, S)$ be an SGGI of rank $r$ with $S = \{\rho_0, \ldots, \rho_{r-1}\}$. Throughout this section, we assume the following conditions.
\begin{itemize}
    \item $G$ is a transitive subgroup of $\mathrm{Alt}(n)$.
    \item For each $i \in \{0, \ldots, r-1\}$, the subgroup $G_i$ is intransitive. These two conditions are equivalent to the existence of a fracture graph for $\Gamma$.
    \item The graph $\Gamma$ does not admit a $2$-fracture graph. Consequently, by  Section~\ref{sec:splits}, $\Gamma$ admits at least one split. 
    \item There exists $i\in \{0,\ldots,r-1\}$ such that  $\Gamma$ has a split with label $i$ that is not perfect.
\end{itemize}
}
\end{hypothesis}

In what follows, let $\mathcal{G}$ be the permutation representation graph of $\Gamma$. Let $\{a, b\}$ be the split with label $i$. Let $O_a$ and $O_b$ be the $G_i$-orbits containing $a$ and $b$, respectively, and let $n_A$ and $n_B$ denote the sizes of $O_a$ and $O_b$. From the definition of split in Section~\ref{sec:splits}, $O_a\cup O_b$ equals the domain of $G$.  Let $A$ and $B$ be the permutation groups induced by $G_i$  in its action on $O_a$ and $O_b$.\footnote{Observe that $A$ and $B$ are quotients of $G_i$, whereas when the split is perfect, $A$ and $B$ are also subgroups of $G_i$.} Notice that $A$ and $B$ are  SGGIs. For each $l\in \{0,\ldots,r-1\}\setminus\{i\}$, let $\rho_l=\alpha_l\beta_l$, where $\alpha_l$ and $\beta_l$ are the restrictions of $\rho_l$ to  $O_a$ and $O_b$, respectively. 
Then 
\begin{eqnarray*}
A& = \langle \alpha_l\, |\, l\in \{0,\ldots, r-1\}\setminus\{i\}\rangle,\\
B& = \langle \beta_l\, |\, l\in \{0,\ldots, r-1\}\setminus\{i\}\rangle.
\end{eqnarray*} 
Let 
\begin{eqnarray*}
J_A&= \{ l  \in \{0,\ldots, r-1\}\setminus\{i\}\mid  \alpha_l \textrm{ is not the identity}\},\\
J_B&= \{ l  \in \{0,\ldots, r-1\}\setminus\{i\}\mid  \beta_l \textrm{ is not the identity}\}.
\end{eqnarray*} 
 In Subsection 5.1 of \cite{2017CFLM}, the authors prove several results where the intersection property is not required.
 We recall here some of these results.
  
 \begin{prop}[Proposition 5.1, \cite{2017CFLM}] \label{CCD}
If  $A$ is primitive, then the set $J_A$ is an interval.  The same result holds for $B$.
\end{prop}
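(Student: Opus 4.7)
The plan is to argue by contradiction: a ``gap'' in $J_A$ at some index distinct from $i$ forces a commuting factorisation $A = A_-A_+$ whose factors are nontrivial normal subgroups of $A$, and primitivity then cannot survive. The statement for $B$ follows by the analogous argument, interchanging the roles of $O_a$ and $O_b$, so I focus on $A$.

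First, if $J_A$ is not an interval of $\{0,\ldots,r-1\}\setminus\{i\}$, there exist indices $l_1 < l_3 < l_2$ with $l_1,l_2 \in J_A$ and $l_3 \notin J_A\cup\{i\}$. Setting
$$A_- = \langle \alpha_l \mid l \in J_A,\ l < l_3 \rangle, \qquad A_+ = \langle \alpha_l \mid l \in J_A,\ l > l_3 \rangle,$$
both subgroups are nontrivial (they contain $\alpha_{l_1}$ and $\alpha_{l_2}$ respectively) and $A = A_- A_+$. For any $l \in J_A$ with $l < l_3$ and $l' \in J_A$ with $l' > l_3$ one has $l'-l \ge 2$, so the string-commuting property of $\Gamma$ gives $(\rho_l\rho_{l'})^2 = 1$; restricting to $O_a$ yields $[\alpha_l,\alpha_{l'}] = 1$, and hence $[A_-,A_+] = 1$. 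In particular $A_-$ and $A_+$ are both normal in $A$.

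Next, the orbits of $A_-$ on $O_a$ form an $A$-invariant partition (since $A_-$ preserves them and $A_+$ permutes them by centralisation), which primitivity collapses to a trivial one; as $A_-$ is nontrivial the orbits cannot be singletons, forcing $A_-$ to be transitive on $O_a$. By symmetry $A_+$ is transitive. The classical fact that the centraliser in $\Sym(O_a)$ of a transitive subgroup is semiregular then upgrades both $A_-$ and $A_+$ to regularity, since each lies in the centraliser of the other; in particular $|A_-|=|A_+|=|O_a|$ and $A_- \cap A_+ \subseteq Z(A)$.

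The final step extracts a contradiction from the existence of two distinct commuting regular normal subgroups of a primitive group $A$. If $A_-$ is abelian then, being regular, it is self-centralising in $\Sym(O_a)$; this forces $A_+ = A_-$, so $A$ is itself abelian, regular and primitive, hence $A \cong C_p$ for some prime $p$. Being generated by involutions, $A \cong C_2$ and $|O_a|=2$, a degenerate regime in which the ``gap'' is illusory because both $\alpha_{l_1}$ and $\alpha_{l_2}$ coincide with the unique transposition of $O_a$. Otherwise, the O'Nan--Scott classification places $A$ in the ``holomorph'' class of primitive groups whose socle has the form $T\times T$ for a nonabelian simple group $T$, with $A_-$ and $A_+$ as its two minimal normal subgroups acting as left and right multiplication on $T$. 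This holomorph case is the main obstacle, and I would rule it out by exploiting the rigidity of the involutory string-commuting presentation: every generator $\alpha_l$ is a fixed-point-free involution on $O_a \cong T$, and the commuting relations between the $J^-$- and $J^+$-parts translate into relations between left- and right-translations by specific involutions of $T$ that the string structure can be shown not to sustain simultaneously. This contradiction completes the argument.
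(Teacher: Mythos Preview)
This proposition is quoted from \cite{2017CFLM} and is not proved in the present paper, so there is no in-paper argument to compare against; I evaluate your attempt on its own merits.

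There is a genuine gap. You correctly factor $A=A_-A_+$ with $[A_-,A_+]=1$ and deduce (under primitivity) that both factors are regular, but you then fail to close the nonabelian case. Primitive groups of holomorph type $T\times T$ acting on $T$ with $T$ nonabelian simple genuinely exist, and your final paragraph offers only the sketch of an intention (``the string structure can be shown not to sustain simultaneously'') with no concrete mechanism. That is not a proof. Your handling of the abelian case is also faulty: when $|O_a|=2$ you assert the gap is ``illusory'', but $J_A=\{i-1,i+1\}$ is genuinely not an interval of $\{0,\ldots,r-1\}$, which is the sense the paper uses (see the proof of Proposition~\ref{allinone}).

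What you are missing is a much shorter route that bypasses O'Nan--Scott entirely and exploits the one piece of structure you never touch: the distinguished point $a$. Because $\{a,b\}$ is the \emph{unique} $i$-edge between $O_a$ and $O_b$, any $\rho_l$ with $|l-i|\ge 2$ (which commutes with $\rho_i$ and preserves the two $G_i$-orbits) must fix both $a$ and $b$; hence $\alpha_l(a)=a$ for all such $l$. Now if your gap $l_3$ lies above $i$, every generator of $A_+$ has label $\ge l_3+1\ge i+2$, so $A_+$ is a nontrivial normal subgroup fixing $a$, and its orbit partition is a nontrivial block system---done, with no appeal to regularity at all. The case $l_3<i$ is symmetric. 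When the only gap is at $i$ itself, the same fixed-point observation combined with the regularity you already established forces $\alpha_l=1$ for all $l\notin\{i-1,i+1\}$, leaving $J_A=\{i-1,i+1\}$ and $|O_a|=2$ as the sole residual case to be disposed of explicitly.
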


The main result of Subsection 5.1 of \cite{2017CFLM} gives an upper bound for the rank of $\Gamma$, when $A$ and $B$ are both imprimitive.
 
 \begin{prop}[Proposition 5.7, \cite{2017CFLM}] \label{bothimp}
If $A$ and $B$ are both imprimitive, then  $r \leq  (n-1)/2$.
\end{prop}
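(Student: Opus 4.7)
My plan is to exploit the imprimitivity of both $A$ and $B$ in combination with the commutation relations forced by $\rho_i$.

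The first step is to extract the elementary consequence of the fact that $\{a,b\}$ is the unique cross-orbit pair under $\rho_i$: for any $l$ with $|l-i|>1$, the identity $\rho_i\rho_l=\rho_l\rho_i$ applied to the point $a$ gives $\beta_l(b)=\rho_i(\alpha_l(a))$, and since the left-hand side lies in $O_b$ while the right-hand side lies in $O_a$ unless $\alpha_l(a)=a$, one concludes $\alpha_l(a)=a$ and symmetrically $\beta_l(b)=b$. Thus the distinguished point $a$ is fixed by every $\alpha_l$ with $l\notin\{i-1,i+1\}$, and likewise for $b$; only the two boundary labels can move the split points inside $A$ and $B$.

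Next, I would bring in the imprimitivity. Choose a maximal $A$-invariant partition $\mathcal{B}_A$ of $O_a$, so that the induced action $\bar{A}$ on $\mathcal{B}_A$ is primitive, and do the same for $B$. Let $\bar{J}_A$ and $\bar{J}_B$ be the sets of labels that act non-trivially on the respective block systems. Applying Proposition~\ref{CCD} to the primitive groups $\bar{A}$ and $\bar{B}$ gives that $\bar{J}_A$ and $\bar{J}_B$ are both intervals, and the labels in $J_A\setminus\bar{J}_A$ (resp.\ $J_B\setminus\bar{J}_B$) correspond to generators in the kernel of the block action, acting block-by-block.

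Third, I would combine counts. Writing $n_A=k_A\bar{n}_A$ and $n_B=k_B\bar{n}_B$ with $k_A,k_B\ge 2$, the cardinalities of the intervals $\bar{J}_A$ and $\bar{J}_B$ are bounded by $\bar{n}_A-1$ and $\bar{n}_B-1$ (the induced block actions themselves admit a fracture graph, so a Proposition~\ref{frac2}-type bound applies). The kernel labels act inside blocks of size $k_A$ or $k_B$, and their contribution is constrained by the SGGI commuting property together with the fact that only $i-1$ and $i+1$ can move $a$ or $b$. The non-perfectness of the split enters here: it forces either $J_A\cap J_B\neq\emptyset$ or that $J_A$ and $J_B$ straddle $i$ on both sides, so the joint cardinality $|J_A\cup J_B|=r-1$ drops below the naive sum $|J_A|+|J_B|$.

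The main obstacle will be the final bookkeeping: translating the interval and kernel constraints into the single inequality $r\le (n-1)/2$ rather than a slightly weaker bound. I expect a short case analysis according to the relative positions of the intervals $\bar{J}_A$, $\bar{J}_B$ and the label $i$, with the tightest configuration arising when the kernel generators on one side absorb the block-action generators on the other, and where the non-perfectness contributes exactly the savings needed to reach $(n-1)/2$.
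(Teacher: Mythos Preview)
This proposition is not proven in the paper under review; it is imported verbatim as Proposition~5.7 of \cite{2017CFLM}, with no argument given here. There is thus no proof in this paper to compare your proposal against, and I can only assess your sketch on its own merits.

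Your first step (that only $\alpha_{i\pm1}$ can move $a$ and only $\beta_{i\pm1}$ can move $b$) is correct and standard for splits. Your second step---passing to a maximal block system and applying Proposition~\ref{CCD} to the primitive quotient---is a reasonable reduction, though you should be explicit that the quotient still carries an SGGI structure and justify the bound $|\bar{J}_A|\le\bar{n}_A-1$: independence of the $\alpha_l$ need not descend to the block action, so this requires a separate fracture-graph argument on $\bar{n}_A$ points, not just an appeal to Proposition~\ref{frac2} (which concerns $2$-fracture graphs and gives $n/2$, not $n-1$).

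The genuine gap is your third step. You assert that non-perfectness forces $|J_A\cup J_B|$ strictly below $|J_A|+|J_B|$, and that the kernel contributions are ``constrained'' so as to yield $(n-1)/2$ after ``a short case analysis''---but none of this is executed, and you yourself flag the bookkeeping as the main obstacle. That bookkeeping \emph{is} the content of the proof in \cite{2017CFLM}: tracking how kernel labels on one side interact with block-action labels on the other, and how the positions of the intervals relative to $i$ force the required saving, takes several pages there and is not short. As written, your proposal is a plausible strategic outline whose decisive step is missing; it does not yet establish the inequality.
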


The following result is one of the most important tools in the proofs of this section.

\begin{prop}[Proposition 5.18, \cite{2017CFLM}]\label{path}
If $e$ is an $f$-edge of $\mathcal{G}$ not in an alternating square, then any path (not containing another $f$-edge) from $e$ to an edge with label $l$, with $l<f$ (resp. $l>f$), contains all labels between $l$ and $f$. 
Moreover, there exists a path from $e$ to an $l$-edge, that is fixed by $G_{<l}$  (resp. $G_{>l}$).
\end{prop}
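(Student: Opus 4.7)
The strategy is to extract, from the non-alternating-square hypothesis on $e$, strong constraints on which generators fix the endpoints of $e$, and then to propagate these constraints along the path using the commuting property.

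Write $e = \{u_0, u_1\}$ and, for each vertex $v$, set $L(v) = \{g : v\rho_g \ne v\}$. The first step is to show $L(u_0), L(u_1) \subseteq \{f-1, f, f+1\}$. Indeed, any $\rho_g$ with $|g-f| > 1$ commutes with $\rho_f$; if such a $\rho_g$ moved $u_0$, then $u_0, u_0\rho_g, u_1\rho_g, u_1$ would form an alternating square through $e$, contradicting the hypothesis. The second tool is the following interval-propagation lemma, with a one-line proof: if $L(v) \subseteq [a,b]$ and $c \in L(v)$, then for $v' = v\rho_c$ one has $L(v') \subseteq [a,b]$, $[a-1,b]$, or $[a,b+1]$ according as $c$ is strictly interior, equal to $a$, or equal to $b$; for any $g \notin [a,b]$ with $|g-c| > 1$, $\rho_g$ commutes with $\rho_c$ and fixes $v$, hence fixes $v'$, so only $c\pm 1$ can become newly active at $v'$.

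For the first assertion (treating $l<f$; the case $l>f$ is dual), I would take a path $e = e_1, \ldots, e_k$ with labels $c_1 = f, c_2, \ldots, c_k = l$ and $c_j \ne f$ for $j \ge 2$. Starting from the enclosing interval $[a_1, b_1] = [f-1, f+1]$ for $L(u_1)$ and iterating the lemma, one obtains an enclosing interval $[a_j, b_j] \supseteq L(u_j)$ whose left endpoint $a_j$ decreases by exactly $1$ whenever $c_j = a_{j-1}$, and otherwise equals $a_{j-1}$. Since $l \in L(u_{k-1}) \subseteq [a_{k-1}, b_{k-1}]$ forces $a_{k-1} \le l$, the left endpoint must drop at least $f-1-l$ times, with the ``decreasing-step'' labels being the successive values $f-1, f-2, \ldots, a_{k-1}+1$ of the current left endpoint. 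These cover $\{l+1, \ldots, f-1\}$, and together with $c_1 = f$ and $c_k = l$, every label in $[l,f]$ appears in the path.

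For the second assertion, the natural candidate is the descending path $c_j = f-j+1$, giving $u_j = u_0\rho_f\rho_{f-1}\cdots\rho_{f-j+1}$ and ending at the $l$-edge $\{u_{k-1}, u_k\}$ with $k = f-l+1$. A short commutativity computation shows $u_{k-1}$ is pointwise fixed by $G_{<l}$: for any $g < l$ the gap $h-g$ is at least $2$ for every $h \in \{l+1, \ldots, f\}$, so $\rho_g$ commutes with each $\rho_h$ in the defining word of $u_{k-1}$ and can be pushed through to act on $u_0$, which it already fixes. The same argument handles $u_k$ for $g \le l-2$. The principal obstacle is the case $g = l-1$: since $\rho_{l-1}$ does not commute with $\rho_l$, the commutativity trick breaks at the final step, and ensuring that one can always find an $l$-edge with the required fixation property will require additional input, such as a more delicate choice of path (for instance, first walking from $e$ along edges with labels in $G_{>l}$, which centralizes $G_{<l}$ and so preserves the fixed-point set of $G_{<l}$) or a finer induction on $f-l$ that builds the good $l$-edge from a good $(l+1)$-edge.
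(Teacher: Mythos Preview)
The paper does not supply its own proof of this proposition; it is quoted from \cite{2017CFLM} (Proposition~5.18 there) and used as a black box, so there is no argument in the present paper to compare against. I therefore evaluate your proposal on its own merits.

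Your treatment of the first assertion is correct. The observation $L(u_0),L(u_1)\subseteq\{f-1,f,f+1\}$ follows exactly as you say from the commuting property together with the non-square hypothesis, and the interval-propagation lemma is valid (with the harmless caveat that when $c=a=b$ the interval may expand on both sides). The counting of left-endpoint drops then forces every label in $[l,f]$ to occur: to bring $a_{k-1}$ down to at most $l$ from $a_1=f-1$ one must pass through drop steps whose labels are $f-1,f-2,\ldots,l+1$ in that order, and $c_1=f$, $c_k=l$ supply the remaining two.

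For the second assertion your proposal is incomplete in two respects. First, the explicit descending path $u_j=u_0\rho_f\rho_{f-1}\cdots\rho_{f-j+1}$ need not be a path at all: from $L(u_{j-1})\subseteq[f-j+1,f+1]$ you cannot conclude that $f{-}j{+}1\in L(u_{j-1})$, so $\rho_{f-j+1}$ may fix $u_{j-1}$ and the walk stalls before any $l$-edge is reached. Second, as you yourself flag, even granting the path, the terminal vertex $u_k$ may fail to be fixed by $\rho_{l-1}$. Your suggested repair --- walking from $e$ along edges with labels in $\{l+1,\ldots,r-1\}$, using that $\mathrm{Fix}(G_{<l})$ is $G_{>l}$-invariant because $G_{>l}$ centralises $G_{<l}$ --- is indeed the right mechanism, but to turn it into a proof you still need to show that some vertex of the $G_{>l}$-orbit of $u_1$ is incident to an $l$-edge, and to settle whether the statement demands both endpoints of that $l$-edge (or only the near one) to lie in $\mathrm{Fix}(G_{<l})$. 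As it stands, the second half is a sketch rather than a proof.
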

Observe that Proposition~\ref{path} applies immediately to the edge $e=\{a,b\}$, because a split edge does not belong to any alternating square.

Let us begin with the specific case where $\{0, r-1\} \subseteq J_A$, which, by Proposition~\ref{path}, implies that $J_A = \{0, \ldots, r-1\} \setminus \{i\}$.

\begin{prop}\label{allinone} 
If $\{0,r-1\}\subseteq J_A$, then $r\leq  (n-1)/2$.
\end{prop}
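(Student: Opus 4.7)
The plan is to use Proposition~\ref{path} to force $J_A$ to be as large as possible, then deduce via Proposition~\ref{CCD} that $A$ is imprimitive, and finally dispatch the bound through Proposition~\ref{bothimp} together with an analysis of the residual case where $B$ is primitive.

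First, I would apply Proposition~\ref{path} to the split edge $e=\{a,b\}$: since $e$ is the unique $i$-edge crossing between $O_a$ and $O_b$, no alternating $(i,j)$-square can contain it (such a square would require a second cross-edge), so the hypotheses of Proposition~\ref{path} are met. Because $0 \in J_A$, there is a $0$-edge in $O_a$; starting at $a$ and using only labels $\neq i$ (which preserve $O_a$), one can route a path from $e$ to this edge entirely inside $O_a$, and Proposition~\ref{path} then forces every label in $\{0,1,\dots,i-1\}$ to appear on $O_a$. Symmetrically, $r-1 \in J_A$ forces every label in $\{i+1,\dots,r-1\}$ to appear on $O_a$. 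Hence $J_A = \{0,\dots,r-1\}\setminus\{i\}$, and since $0, r-1 \ne i$ we in fact have $i \in \{1,\dots,r-2\}$, so the gap in $J_A$ is internal.

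Because $J_A$ has an interior gap at $i$, it is not an interval of $\{0,\dots,r-1\}$. The contrapositive of Proposition~\ref{CCD} then forces $A$ to be imprimitive on $O_a$. At this point, if $B$ is also imprimitive, Proposition~\ref{bothimp} immediately yields $r \le (n-1)/2$ and the proof is complete.

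The remaining case, where $B$ is primitive, I expect to be the main obstacle. Proposition~\ref{CCD} makes $J_B$ an interval of $\{0,\dots,r-1\}$, and running the same path argument on the $O_b$-side (noting $e$ has its other endpoint $b \in O_b$) forces $J_B \cap \{0,\dots,i-1\}$ and $J_B \cap \{i+1,\dots,r-1\}$ each to abut $i$; combined with the interval condition this pins $J_B$ to being empty or lying entirely on one side of $i$. If $J_B = \emptyset$, then $n_B = 1$, every $\beta_l = 1$, and parity of each $\rho_l \in \mathrm{Alt}(n)$ forces $A \le \mathrm{Alt}(O_a)$; the plan here is to use the imprimitivity of $A$ together with the completeness of $J_A$ to produce a $2$-fracture graph of $A$ on $O_a$ and invoke Proposition~\ref{frac2} to bound $r-1$. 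If instead $J_B$ is a non-empty interval on, say, the left of $i$, then each $\rho_l$ with $l > i$ fixes $O_b$ pointwise; combining this with the path structure inside $O_a$ should let one exhibit a $2$-fracture graph of $\Gamma$ itself, contradicting Hypothesis~\ref{hypNP}. Assembling these constructions cleanly, and in particular sharpening the inequality from the naive $r \le n_A/2 + 1$ to the required $r \le (n-1)/2$ in the subcase $J_B = \emptyset$, is the main technical difficulty.
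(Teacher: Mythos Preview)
Your reduction to the case where $B$ is primitive is correct and matches the paper: $J_A=\{0,\ldots,r-1\}\setminus\{i\}$ is not an interval, so $A$ is imprimitive by Proposition~\ref{CCD}, and if $B$ is also imprimitive then Proposition~\ref{bothimp} finishes. The difficulty, as you say, is the $B$-primitive case, and here your plan does not work.

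First, the idea of exhibiting a $2$-fracture graph of $\Gamma$ when $J_B$ lies on one side of $i$ is a non-starter. By Hypothesis~\ref{hypNP} the edge $\{a,b\}$ is an $i$-split, which by definition means there is a \emph{unique} $i$-edge joining points in different $G_i$-orbits; this alone prevents $\Gamma$ from ever admitting a $2$-fracture graph, regardless of where $J_B$ sits. So no contradiction with Hypothesis~\ref{hypNP} can be obtained this way. Second, in the subcase $J_B=\emptyset$ (so $n_B=1$), there is no reason to expect $A$ to admit a $2$-fracture graph: for that you would need each $A_l=\langle\alpha_j:j\ne i,l\rangle$ to be intransitive on $O_a$, but the intransitivity you know is that of $G_l$ on $\{1,\ldots,n\}$, and $G_l$ contains $\rho_i$, which acts nontrivially on $O_a$ (it has an odd number of transpositions there besides $(a\,b)$). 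The passage from $G_l$ to $A_l$ is therefore not straightforward. And as you already note, even if this worked it would only give $r\le (n+1)/2$.

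The paper's argument in the $B$-primitive case is structurally quite different. It splits on whether $G_{<i}$ is transitive on $O_a$. When it is, one shows $i=r-2$, $J_B=\{r-1\}$, and uses that $\rho_{r-1}$ is fixed-point-free on $O_a$ to build a block system with blocks of size $2$; Whiston's bound on the block action plus a parity check then gives the inequality. When $G_{<i}$ is intransitive on $O_a$, its orbits form a block system for $A$, and a careful path-counting argument via Proposition~\ref{path} (building four disjoint paths $\mathcal{P}_1,\mathcal{P}_2,\mathcal{P}_3,\mathcal{P}_4$ carrying all labels below and above $i$) yields $2(r-1)\le n-n_B$, which combined with case analysis on $n_B\in\{1,2,\ge 3\}$ and parity gives the bound. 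You will need this kind of direct structural analysis rather than a $2$-fracture-graph shortcut.
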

\begin{proof}
Since $\{0,r-1\}\subseteq J_A$, we have $0\ne i\ne r-1$ and hence $J_A = \{0, \ldots, r-1\} \setminus \{i\}$ is not an interval. Thus,
by Proposition~\ref{CCD},  $A$ is imprimitive. If $B$ is imprimitive, then the result follows from Proposition~\ref{bothimp}. Hence, for the rest of the proof, we may suppose that $J_B$ is an interval and that $B$ is primitive. We consider two cases separately.

\smallskip

\noindent\textsc{Case $G_{<i}$ is transitive on $O_a$.}

\smallskip

\noindent  

Let $l > i$. As $\rho_l$ commutes with $G_{<i}$ and $G_{<i}$ is transitive on $O_a$, it follows that $\rho_l$ is fixed-point-free on $O_a$. Since $\{a, b\}$ is a split and splits cannot be contained in alternating squares, $\rho_{i+1}$ is the unique involution in $S$ with label $>i$ not fixing $a$. We conclude that $l = i+1$, $i+1 = r - 1$, and $G_i = G_{r-2}$.

Since $G_{i+1} = G_{r-1}$ is intransitive, $\rho_{i+1} = \rho_{r-1}$ acts non-trivially on $O_b$, that is, $r - 1 \in J_B$.  
As $J_B$ is an interval and $J_B \subseteq \{0, \ldots, r-1\} \setminus \{i\} = \{0, \ldots, r-3\} \cup \{r-1\}$, we deduce that $J_B = \{r-1\}$. Hence, $n_B = 2$ and $n_A = n - 2$.

From the fact that $\rho_{r-1}$ acts fixed-point-freely on $O_a$ and commutes with $G_{<r-2}$, we deduce that $G_{<r-2}$ preserves a system of imprimitivity with $n_A/2$ blocks of cardinality $2$. Hence, we obtain an embedding $G_{<r-2} \leq \mathrm{Sym}(2) \mathrm{wr}\, \mathrm{Sym}(n_A/2)$, and $\rho_{r-1}$ swaps all pairs of vertices within the blocks of size $2$.

We claim that, for every $j < i$, the group $\langle \rho_0, \ldots, \rho_{j-1}, \rho_{j+1}, \ldots, \rho_{i-1} \rangle$ acts intransitively on the system of imprimitivity preserved by $G_{<i}$. Assume the contrary, and let $j < i$ be such that $\langle \rho_0, \ldots, \rho_{j-1}, \rho_{j+1}, \ldots, \rho_{i-1} \rangle$ is transitive on the system of imprimitivity. Now, the group
\[
\langle \rho_0, \ldots, \rho_{j-1}, \rho_{j+1}, \rho_{j+2}, \ldots, \rho_{r-4}, \rho_{r-3}, \rho_{r-1} \rangle
\]
acts transitively on the system of imprimitivity of $G_{<i}$ and, via $\rho_{r-1}$, acts transitively on the two points within each block. Therefore, 
\[
\langle \rho_0, \ldots, \rho_{j-1}, \rho_{j+1}, \ldots, \rho_{r-3}, \rho_{r-1} \rangle
\]
has orbits $O_a$ and $O_b$ on the domain of $G$. Finally, since $\rho_{r-2}$ swaps $a$ and $b$, we deduce that 
\[
\langle \rho_0, \ldots, \rho_{j-1}, \rho_{j+1}, \ldots, \rho_{r-3}, \rho_{r-2}, \rho_{r-1} \rangle = G_j
\]
is transitive, which contradicts Hypothesis~\ref{hypNP}.

The previous claim shows that the set $\{\rho_0, \dots, \rho_{r-3}\}$ generates the block action independently. 
 The maximal size of an independent set of degree $n_A/2$ is at most $n_A/2 - 1$ \cite{2000W}. Therefore,
\[
\left|\left\{\rho_0, \dots, \rho_{r-3}\right\}\right| \leq \frac{n_A}{2} - 1 \leq \frac{n - 2}{2} - 1.
\]
Assume now that $r > \frac{n - 1}{2}$. Then it follows that
\[
\left|\left\{\rho_0, \dots, \rho_{r-3}\right\}\right| > \frac{n - 1}{2} - 2.
\]
Combining these inequalities forces $n_A = n - 2$ and
\[
\left|\left\{\rho_0, \dots, \rho_{r-3}\right\}\right| = \frac{n - 2}{2} - 1.
\]
The independence of the set $\{\rho_0, \dots, \rho_{r-3}\}$, together with the string condition, implies that 
the permutation representation graph of the group generated by $\rho_0, \dots, \rho_{r-3}$ in its action on the blocks is as in the following picture.
\[
\xymatrix@-1.3pc{
*+[o][F]{}\ar@{-}[rr]^0 
&&
*+[o][F]{} \ar@{-}[rr]^{1}
&&
*+[o][F]{} \ar@{.}[rr] 
&&
*+[o][F]{}\ar@{-}[rr]^{r-4}
&&
*+[o][F]{}\ar@{-}[rr]^{r-3} &&
*+[o][F]{}\ar@{}[rr]^{}&&}
\]
From~\cite{2000W}, this implies that $r-2 \leq n_A/2 - 1$, leading to $r \leq n/2$. If $r<n/2$, then the result follows. Assume then $r=n/2$, that is, $r=n_A/2+1$.

Observe that $\langle \rho_0, \ldots, \rho_{r-4} \rangle$ fixes one block, namely the block containing the point $a$, and acts as the full symmetric group of degree $r-3$ on the remaining blocks. As $G \le \mathrm{Alt}(n)$, $\rho_i = \rho_{r-2}$ cannot be a transposition, meaning that it acts non-trivially on $O_a$. Since $\rho_i = \rho_{r-2}$ commutes with $\langle \rho_0, \ldots, \rho_{r-4} \rangle$, we deduce that $\rho_i$ fixes setwise $r-3$ blocks. As there are $r-2$ blocks in total, we conclude that $\rho_i = \rho_{r-2}$ also fixes setwise the remaining block, that is, $\rho_i$ lies in the kernel of the action on the blocks. However, this leads to the contradiction that either $\rho_{r-2}$ or $\rho_{r-1}$ is an odd permutation, as illustrated in the following graph.
$$\xymatrix@-1.4pc{
*+[o][F]{}\ar@{=}[dd]_{r-2}^{r-1}\ar@{-}[rrrr]^0&&&&*+[o][F]{}\ar@{=}[dd]_{r-2}^{r-1}\ar@{-}[rrrr]^1&&&&*+[o][F]{}\ar@{=}[dd]_{r-2}^{r-1}\ar@{.}[rrrr]&&&&*+[o][F]{}\ar@{-}[rr]^{r-3}\ar@{=}[dd]_{r-2}^{r-1}&&*+[o][F]{a}\ar@{-}[rr]^{r-2}\ar@{-}[dd]^{r-1}&&*+[o][F]{b}\ar@{-}[rr]^{r-1}&&*+[o][F]{} \\
&&&&&&&&&&&&&&&&&&\\
*+[o][F]{}\ar@{-}[rrrr]_0&&&&*+[o][F]{}\ar@{-}[rrrr]_1&&&&*+[o][F]{}\ar@{.}[rrrr]&&&&*+[o][F]{}\ar@{-}[rr]_{r-3}&&*+[o][F]{}&&&& }$$

\smallskip

\noindent\textsc{ Case $G_{<i}$ is intransitive on $O_a$.}

\smallskip

\noindent Recall that $J_A=\{0,\ldots,i-1\}\cup\{i+1,\ldots,r-1\}$. Let $A_{<i}=\langle \alpha_l\mid l<i\rangle$ and $A_{>i}=\langle\alpha_l\mid l>i\rangle$. Observe that $A_{<i}$ and $G_{<i}$ induce the same action on $O_a$. We have $A=\langle A_{<i},A_{>i}\rangle$ and $A_{<i}$ commutes with $A_{>i}$.  Thus $A_{<i}\unlhd A$. Since $G_{<i}$ is intransitive on $O_a$ so is $A_{<i}$. Hence, the orbits of $A_{<i}$ on $O_a$ give rise to a system of imprimitivity for $A$. In particular, there exists an embedding $A\le \mathrm{Sym}(k)\mathrm{wr}\mathrm{Sym}(m)$, with 
  $G_{<i}$ fixing all the blocks setwise. 
  
  Let $\beta$ denote the block containing the vertex $a$. Since $\{a,b\}$ is an $i$-split, the only permutation in $S$ with label $>i$ not fixing $a$  is $\rho_{i+1}$.
Therefore, $\rho_{i+1}$ is the only permutation in $S$ with label $>i$ that does not fix the block $\beta$. Hence, $\beta\rho_{i+1}$ is a  $G_{<i}$-orbit disjoint from  $\beta$. By Proposition~\ref{path}, there exists a path $\mathcal{P}_1$, whose vertices are in $\beta$, from a $0$-edge to the vertex $a$, containing all labels from  $0$ to $i-1$. Similarly, there is a path $\mathcal{P}_2$ in the block $\beta\rho_{i+1}$ containing all labels from $0$ to $i-1$.

\smallskip

\noindent\textsc{Subcase: there is an edge with label $l>i$ inside $\beta\rho_{i+1}$ (or $\beta$).}

\smallskip 

\noindent Since $\beta\rho_{i+1}$ is a block, this implies that $\rho_l$ fixes setwise $\beta\rho_{i+1}$. Since $G_{<i}$ is transitive on $\beta \rho_{i+1}$ and since $G_{<i}$ centralizes $\rho_l$, we deduce that $\rho_l$ acts fixed-point-freely on $\beta\rho_{i+1}$. Moreover, since a split cannot belong to an alternating square, we deduce that $\rho_{i+2}$ is the only permutation that acts non-trivially on $a\rho_{i+1}\in\beta\rho_{i+1}$. This implies $l=i+2$ and $r-1=i+2$. From the definition of the system of imprimitivity, $G_{<i}=\langle\rho_0,\ldots,\rho_{r-3}\rangle$ fixes setwise $\beta\rho_{i+1}$. Since $\rho_l=\rho_{r-1}$ also fixes setwise $\beta\rho_{i+1}$, we deduce that $G_{i+1}$ fixes setwise $\beta\rho_{i+1}$.
From this, we deduce that 
$\beta$ and $\beta\rho_{i+1}$ are  the only two blocks.

From the previous paragraph, $\rho_{i+2}=\rho_{r-1}$ centralizes $G_{<i}$ and acts fixed-point-freely on $\beta\rho_{i+1}$. Therefore, we obtain a homomorphism of $G_{<i}$ into $\mathrm{Sym}(2)\mathrm{wr} \mathrm{Sym}(k/2)$.
Moreover, as $G_{i+2}$ is intransitive, $\rho_{i+2}=\rho_{r-1}$ acts non-trivially on $O_b$.
Since $J_B$ is an interval and $i=r-3$, we have $J_B=\{r-2,r-1\}$ and $n_B\geq 3$.
In addition, arguing as in the previous case, since $G_j$ is intransitive for every $j$, the set $\{\rho_0,\ldots,\rho_{i-1}\}$ generates the block action (for the block system with $k/2$ blocks of size $2$) independently.
Hence, by~\cite{2000W}, $i=r-3\leq k/2-1=n_A/4-1\leq (n-3)/4-1$. This gives $r\leq (n-1)/2$ (notice that $n$ is necessarily greater than or equal to $4+3=7$).

\smallskip

\noindent\textsc{Subcase: $\beta$ and $\beta\rho_{i+1}$ do not contain edges with label $l>i$.}

\smallskip 

\noindent  By Proposition~\ref{path},  there is a path $\mathcal{P}_3$ in $O_a$ containing edges with all labels from $r-1$ to $i+1$. Using the commuting property between $\rho_{i-1}$ and $\rho_l$ with $l>i$, applying $\rho_{i-1}$ to $\mathcal{P}_3$, we obtain a second path  $\mathcal{P}_4$ disjoint from $\mathcal{P}_3$ and containing edges with all labels from $r-1$ to $i+1$.
Therefore, $\mathcal{P}_1$, $\mathcal{P}_2$, $\mathcal{P}_3$ and $\mathcal{P}_4$ have no edge in common, as shown in the following figure.
$$\xymatrix@-1.3pc{  *+[o][F]{} \ar@{-}[rr]^{r-1}&&*+[.][F]{} \ar@{~}[rrrr]^{\mathcal{P}_3}   &&&&*+[.][F]{}\ar@{-}[d]_{i-1}\ar@{-}[rr]^{i+1} && *+[o][F]{a}\ar@{-}[d]^{i-1} \ar@{-}[rr]^i&& *+[o][F]{b}\ar@{.}[rr]&&\\
   *+[o][F]{} \ar@{-}[rr]_{r-1}&&*+[.][F]{} \ar@{~}[rrrr]_{\mathcal{P}_4}   &&&&*+[.][F]{}\ar@{~}[dd]_{\mathcal{P}_2}\ar@{-}[rr]_{i+1}&&*+[.][F]{}\ar@{~}[dd]^{\mathcal{P}_1}&&&&\\
  &&&&&&&& &&&&\\
  &&&&&&*+[.][F]{} &&*+[.][F]{}  &&&&\\
   &&&&&&&& &&&&\\
  &&&&&&*+[o][F]{}\ar@{-}[uu]_{0}&& *+[o][F]{} \ar@{-}[uu]_{0}&&&&
  }
 $$
 The inequality $2(r-1)\leq n- n_B$ holds. Equality is achieved if and only if there are no points outside the paths $\mathcal{P}_1$, $\mathcal{P}_2$, $\mathcal{P}_3$, and $\mathcal{P}_4$. This implies that $i\in \{1, r-2\}$, and that the permutation representation graph of $A$ is one of the following.
 $$\xymatrix@-1.6pc{
*+[o][F]{}\ar@{-}[dd]^{r-1}\ar@{-}[rrrr]^0&&&&*+[o][F]{}\ar@{-}[dd]^{r-1}\ar@{-}[rrrr]^1&&&&*+[o][F]{}\ar@{-}[dd]^{r-1}\ar@{.}[rrrr]&&&&*+[o][F]{}\ar@{-}[rr]^{r-3}\ar@{-}[dd]_{r-1}&&*+[o][F]{a}\ar@{-}[dd]_{r-1}\\
&&&&&&&&&&&&&&&&\\
*+[o][F]{}\ar@{-}[rrrr]_0&&&&*+[o][F]{}\ar@{-}[rrrr]_1&&&&*+[o][F]{}\ar@{.}[rrrr]&&&&*+[o][F]{}\ar@{-}[rr]_{r-3}&&*+[o][F]{}&&}
 \quad \xymatrix@-1.6pc{
*+[o][F]{}\ar@{-}[dd]^0\ar@{-}[rrrr]^{r-1}&&&&*+[o][F]{}\ar@{-}[dd]^0\ar@{-}[rrrr]^{r-2}&&&&*+[o][F]{}\ar@{-}[dd]^0\ar@{.}[rrrr]&&&&*+[o][F]{}\ar@{-}[rr]^{2}\ar@{-}[dd]_0&&*+[o][F]{a}\ar@{-}[dd]_0\\
&&&&&&&&&&&&&&&&\\
*+[o][F]{}\ar@{-}[rrrr]_{r-1}&&&&*+[o][F]{}\ar@{-}[rrrr]_{r-2}&&&&*+[o][F]{}\ar@{.}[rrrr]&&&&*+[o][F]{}\ar@{-}[rr]_{2}&&*+[o][F]{}&&}
$$

Assume first that $n_B \geq 3$. Then $2(r-1)\leq n-3$, and hence $r\leq (n-1)/2$.

Now assume $n_B = 1$. Suppose first that the equality $2(r-1)= n - n_B = n - 1$ holds. Since all $G_j$s are intransitive, $\rho_i$ must be the transposition swapping $a$ and $b$, contradicting the assumption $\rho_i \in \mathrm{Alt}(n)$. Thus, $2(r-1) < n - 1$ and therefore $r \leq n/2$. If $n$ is odd, then $r \leq (n-1)/2$. If $n$ is even, then $A$ has odd degree and hence cannot have a block system with blocks of even size or with an even number of blocks. Since $n_B = 1$ and $G \leq \mathrm{Alt}(n)$, we deduce that $A$ consists of even permutations. We conclude that there are at least two vertices not in the paths $\mathcal{P}_1$, $\mathcal{P}_2$, $\mathcal{P}_3$, and $\mathcal{P}_4$. Hence, $2(r-1)+1 \leq n - 2$, and therefore $r \leq \frac{n - 1}{2}$.%
\footnote{This part of the argument follows the proof of Proposition 5.25 in \cite{2017CFLM}.}

Finally, assume $n_B = 2$. If $2(r-1) < n - n_B$, then $r \leq (n - 1)/2$. Suppose instead that $2(r - 1) = n - n_B = n - 2$.
Assume that $A$ has the permutation representation graph on the left above, that is, $i = r - 2$. Then the graph $\mathcal{G}$ is as follows:
\[
\xymatrix@-1.6pc{
*+[o][F]{}\ar@{=}[dd]^{r-1}_{r-2}\ar@{-}[rrrr]^0&&&&*+[o][F]{}\ar@{=}[dd]^{r-1}_{r-2}\ar@{-}[rrrr]^1&&&&*+[o][F]{}\ar@{=}[dd]^{r-1}_{r-2}\ar@{.}[rrrr]&&&&*+[o][F]{}\ar@{-}[rrrr]^{r-3}\ar@{=}[dd]_{r-1}^{r-2}&&&&*+[o][F]{a}\ar@{-}[dd]_{r-1}\ar@{-}[rr]^{r-2}&&*+[o][F]{b}\ar@{-}[rr]^{r-1}&&*+[o][F]{}\\
&&&&&&&&&&&&&&&&&&&&&&\\
*+[o][F]{}\ar@{-}[rrrr]_0&&&&*+[o][F]{}\ar@{-}[rrrr]_1&&&&*+[o][F]{}\ar@{.}[rrrr]&&&&*+[o][F]{}\ar@{-}[rrrr]_{r-3}&&&&*+[o][F]{}&&&&&&}
\]
Then either $\rho_{r-2}$ or $\rho_{r-1}$ is odd, a contradiction. A similar conclusion is reached if we consider the other possible permutation representation graph for $A$.
\end{proof}
\begin{hypothesis}\label{hypNP2}{\rm
Observe that, if $A$ and $B$ are both imprimitive, then Theorem~\ref{main} follows by Proposition~\ref{bothimp}. In particular, from Proposition~\ref{CCD}, we may suppose that either   $J_A$ or $J_B$ is an interval. Without loss of generality, we may assume that $J_B$ is an interval.

Up to duality and applying Proposition~\ref{path} with a path from an $(r-1)$-edge in $O_b$ to the $i$-split $\{a,b\}$, we may suppose that
$$ J_B=\{i+1,\ldots, r-1\}.$$
Now, applying Proposition~\ref{path} to the indices in $J_A$, we deduce that
$$J_A=\{0,\ldots, h\}\setminus\{i\},$$
 for some $h\in\{0,\ldots,r-1\}$. When $i\ne 0$, from Proposition~\ref{allinone}, we have either $r\le (n-1)/2$ or $h\ne r-1$. Similarly, when $i=0$, from~\cite[Proposition~5.19]{2017CFLM}, we have either $r\le (n-1)/2$ or $h\ne r-1$. In either case, for the rest of the argument, we may suppose that $h\ne r-1$.
 Thus
 $$i<h<r-1.$$
}
\end{hypothesis}
With this setting, Propositions 5.20–5.23 of \cite{2017CFLM} establish that the permutation representation graph of $\Gamma$ satisfies the following properties:
\begin{itemize}
    \item If $i = 0$, there exists a path fixed by $G_{>h}$ that contains the $0$-split and that has at least two edges for each label $j \in \{1, \ldots, h\}$.
        
    \item If $i \neq 0$ and $h \neq i+1$, the group $G_{<i}$ acts intransitively on $O_a$, and $A$ embeds into a wreath product where the blocks correspond to the $G_{<i}$-orbits. Moreover, if $r > (n-1)/2$, then no generator $\rho_j$ with $j > i$ fixes the $\Gamma_{<i}$-orbits. In this case, the permutation representation graph of $G_{<i}$ contains, in each connected component, a path with all labels in the set $\{0, \ldots, i-1\}$, thus there are at least two such paths, $\mathcal{P}_1$ and $\mathcal{P}_2$. Additionally, there exist at least two disjoint paths, $\mathcal{P}_3$ and $\mathcal{P}_4$, each containing all labels in the set $\{i+1, \ldots, h\}$, and each of these paths intersects $\mathcal{P}_1 \cup \mathcal{P}_2$ in at most two points.
\end{itemize}
Further details about these paths can be found in the cited propositions, which together yield the following result.

\begin{prop}\label{h}
If $r> (n-1)/2$, then, for each $\ell\in \{i+1,\ldots,h\}$, we have  $\ell\leq \frac{n-|X_\ell|-1}{2}$, where $X_\ell= \{1,\ldots,n\}\setminus \mathrm{Fix}(G_{>\ell})$. 
Moreover, if  $\ell\neq h$ and $\ell=\frac{n-|X_\ell|-1}{2}$, then $i=0$ and $\Gamma_{<\ell}$ has a permutation representation graph containing the $0$-split, that is a path, fixed by $G_{>ell}$, with the sequence of labels $[\ell-1,\ell-2,\ldots, 1,0,1, \ldots, \ell-2, \ell-1]$.
If $h=\frac{n-|X_h|-1}{2}$, then $\Gamma_{<h+1}$ has permutation representation graph isomorphic to one of the graphs in  Table~$\ref{Th}$, with eventually some additional edges with label $i$ (the unique $i$-edge represented is the split $\{a,b\}$).
\end{prop}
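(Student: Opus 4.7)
The plan is to follow closely the proofs of Propositions~5.20--5.23 of~\cite{2017CFLM}, which, as the authors of that paper note, never invoke the intersection property and therefore remain valid for SGGIs. I will only sketch how the bound and the classification emerge here.

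First I would fix $\ell\in\{i+1,\ldots,h\}$. By Hypothesis~\ref{hypNP2} we have $\ell\in J_A\cap J_B$, so $\rho_\ell$ acts non-trivially on both $O_a$ and $O_b$. By the commuting property, each $\rho_k$ with $k\le\ell-1$ centralises $G_{>\ell}$ and therefore stabilises $Y_\ell:=\{1,\ldots,n\}\setminus X_\ell$ setwise. Applying Proposition~\ref{path} to the split $\{a,b\}$ (which does not lie in any alternating square), I obtain, on the $O_b$ side, a path realising all labels $i+1,\ldots,\ell$ and, on the $O_a$ side, a path realising all labels $\ell-1,\ell-2,\ldots,0$. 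Using $\rho_{\ell-1}$ (or $\rho_{i-1}$, depending on the side) together with the commuting property, I can translate each path by a suitable involution to obtain a vertex-disjoint copy, exactly as in the last subcase of the proof of Proposition~\ref{allinone}. The resulting collection of paths lies entirely in $Y_\ell\cup\{a,b\}$, since every edge label involved is at most $\ell-1$ and $Y_\ell$ is $\rho_k$-invariant for $k<\ell$, and together they contribute at least $2\ell+1$ distinct vertices. Adding the $|X_\ell|$ vertices of $X_\ell$, I obtain $2\ell+1+|X_\ell|\le n$, that is, $\ell\le (n-|X_\ell|-1)/2$.

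For the moreover part, I specialise $\ell=h$ and assume $h=(n-|X_h|-1)/2$. Then every inequality in the vertex count must be tight, which forces: (a) the paths contain no extra branches or shortcuts and, together with the split, partition $Y_h\cup\{a,b\}$; (b) no edge with label in $\{0,\ldots,h\}\setminus\{i\}$ exists off these paths; (c) no additional alternating squares can arise in $\Gamma_{<h+1}$ beyond those forced by the commuting property. The only remaining freedom is the possible presence of extra $i$-edges, which is compatible with the setup precisely because the split $\{a,b\}$ is not assumed to be perfect. A case analysis by the label of the first edge in each path and by how the two paths on each side of the split meet (or fail to meet), mirroring~\cite[Propositions~5.20--5.23]{2017CFLM} step by step, then enumerates exactly the graphs of Table~\ref{Th}.

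The main obstacle is the combinatorial case analysis in the equality case: one must verify that additional $i$-edges do not secretly merge paths or create new alternating squares, and that no configuration outside Table~\ref{Th} survives the tightness constraints. Since the analysis mirrors~\cite{2017CFLM} almost verbatim, the verification is essentially a matter of careful bookkeeping rather than of introducing any genuinely new idea.
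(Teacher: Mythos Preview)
Your proposal is correct and takes essentially the same approach as the paper: the paper's entire proof of this proposition is the single sentence ``This setting, together with Propositions~5.20--5.23 of~\cite{2017CFLM}, yields the following result,'' and you invoke exactly the same source while additionally sketching the path-doubling vertex count and the tightness analysis that those propositions carry out. Your sketch correctly identifies the key ingredients (Proposition~\ref{path} to produce labelled paths on each side of the split, the commuting property to duplicate them, and the bookkeeping for the equality case), so there is nothing to add beyond what~\cite{2017CFLM} already supplies.
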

Together with the notation in Hypotheses~\ref{hypNP} and~\ref{hypNP2}, for the rest of this section, we reserve the letters $h$ and $X=X_h$ for the meanings we have just established.

\begin{table}[htbp]
\[\begin{array}{|l|}
\hline
(A)\; \xymatrix@-1.3pc{*+[o][F]{}\ar@{-}[rr]^h &&*+[o][F]{} \ar@{-}[rr]^{h-1}&&*+[o][F]{} \ar@{.}[rr] &&*+[o][F]{}\ar@{-}[rr]^1 &&*+[o][F]{a}\ar@{-}[rr]^{0} &&*+[o][F]{b}\ar@{-}[rr]^1&&*+[o][F]{} \ar@{.}[rr]&&*+[o][F]{} \ar@{-}[rr]^{h-1}&& *+[o][F]{} \ar@{-}[rr]^{h}&&*+[o][F]{}  }

\\[5pt]

(B)\;
\xymatrix@-1.3pc{*+[o][F]{}\ar@{-}[rr]^0 \ar@{=}[d]_{i+1}^k &&*+[o][F]{} \ar@{-}[rr]^1\ar@{=}[d]_{i+1}^k&&*+[o][F]{}\ar@{=}[d]_{i+1}^k\ar@{.}[rr] &&*+[o][F]{}\ar@{-}[rr]^{i-1}\ar@{-}[d]_{i+1} &&*+[o][F]{a}\ar@{-}[rr]^i \ar@{-}[d]_{i+1} &&*+[o][F]{b} \ar@{-}[rr]^{i+1}&&*+[o][F]{} \\
*+[o][F]{}\ar@{-}[rr]_0 &&*+[o][F]{} \ar@{-}[rr]_1&&*+[o][F]{}\ar@{.}[rr] &&*+[o][F]{}\ar@{-}[rr]_{i-1}&&*+[o][F]{}&&&&
 }
\qquad h=i+1, \textrm{for some }1<k<i \\[5pt]
  
(C)\; \xymatrix@-1.3pc{*+[o][F]{}\ar@{-}[rr]^0 \ar@{-}[d]_{i+1} &&*+[o][F]{} \ar@{-}[rr]^1\ar@{-}[d]_{i+1}&&*+[o][F]{}\ar@{-}[d]_{i+1} \ar@{.}[rr] &&*+[o][F]{}\ar@{-}[rr]^{i-1}\ar@{-}[d]_{i+1} &&*+[o][F]{a}\ar@{-}[rr]^i \ar@{-}[d]_{i+1} &&*+[o][F]{b} \ar@{-}[rr]^{i+1}&& *+[o][F]{} \\
*+[o][F]{}\ar@{-}[rr]_0 &&*+[o][F]{} \ar@{-}[rr]_1&&*+[o][F]{} \ar@{.}[rr] &&*+[o][F]{}\ar@{-}[rr]_{i-1}&&*+[o][F]{}&&\\ 
 }  
 \qquad h=i+1
  \\[5pt]
\hline
\end{array}\]
\caption{Permutation representation graphs  for Proposition~\ref{h}.} \label{Th}
\end{table}

Notice that $X$ is a subset of $O_b$, as illustrated in the following figure.
\begin{center}
\begin{tikzpicture}
    \node (A) at (0,0) {};
    \node (B) at (1,0) {};
    \node (C) at (2,0) {};
    \node (D) at (3,0) {};
    \node (E) at (4,0) {};
    \node[draw, minimum size=1.2cm] (F) at (6,0) {$\Gamma_{>h}$};
    \node (X) at (2.5,.9) {};
    \node (X) at (6,.8) {$X$};
    
    \draw[-] (A) -- node[above] {$h$} (B);
    \draw[dotted] (B) -- (C);
    \draw[-] (C) -- node[above] {$i$} (D);
    \draw[dotted] (D) -- (E);
    \draw[-] (E) -- node[above] {$h$} (F);
    
    \fill (A) circle (2pt);
    \fill (B) circle (2pt);
    \fill (C) circle (2pt);
    \fill (D) circle (2pt);
    \fill (E) circle (2pt);
\end{tikzpicture}
\end{center}

\begin{lemma}\label{pe}
The permutation representation graph of $\Gamma_{>h}$ has a pendent edge labeled $h+1$ which is incident, in $\mathcal{G}$, to an edge labeled $h$.
\end{lemma}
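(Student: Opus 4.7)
The plan is to locate an $h$-edge of $\mathcal{G}$ whose endpoint $x$ lies in the support $X$ of $G_{>h}$ and whose endpoint $y$ lies in $Y:=\{1,\ldots,n\}\setminus X$; once such an edge is in hand, I will check that this same $x$ has degree $1$ in the permutation representation graph of $\Gamma_{>h}$, so that the $(h+1)$-edge incident to $x$ serves as the required pendent edge. Everything rests on the commuting property: since $|j-k|\ge 2$ whenever $j<h<k$, the subgroups $G_{<h}$ and $G_{>h}$ centralise each other.

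The first step, and the main conceptual hurdle, is to force a crossing $h$-edge between $X$ and $Y$. Since $G_{<h}$ centralises $G_{>h}$, it normalises the support $X$ of $G_{>h}$; of course $G_{>h}$ preserves $X$ as well. The set $X$ is non-empty because $h<r-1$, and it is a proper subset of $\{1,\ldots,n\}$ because $O_a$ is pointwise fixed by $\rho_{h+1},\ldots,\rho_{r-1}$ (as $J_A\subseteq\{0,\ldots,h\}\setminus\{i\}$ by Hypothesis~\ref{hypNP2}), so $O_a\subseteq Y$. If $\rho_h$ also preserved $X$, then the transitive group $G=\langle G_{<h},\rho_h,G_{>h}\rangle$ would preserve the proper non-empty set $X$, a contradiction; hence $\rho_h$ admits a $2$-cycle $\{x,y\}$ with $x\in X$ and $y\in Y$. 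The key insight is that the commuting property confines the only possible ``crossing'' generator between $X$ and $Y$ to be $\rho_h$, and then the transitivity of $G$ forces such a crossing to exist.

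For the second step, I would show that this $x$ is fixed by every $\rho_j$ with $j\ge h+2$. Any such $\rho_j$ commutes with $\rho_h$ and fixes $y\in Y$; combining these two facts with the fact that $\rho_h$ is an involution yields $x\rho_j=x$ in one short calculation starting from $x\rho_h=y$. Since $x\in X$ is moved by some generator of $G_{>h}$ and is now fixed by all of $\rho_{h+2},\ldots,\rho_{r-1}$, it must be moved by $\rho_{h+1}$. Consequently $x$ has degree exactly $1$ in the permutation representation graph of $\Gamma_{>h}$, so $\{x,x\rho_{h+1}\}$ is a pendent $(h+1)$-edge of $\Gamma_{>h}$; it meets the $h$-edge $\{x,y\}$ of $\mathcal{G}$ at the vertex $x$, delivering the required incidence.
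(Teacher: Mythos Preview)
Your proof is correct and takes a genuinely different, more elementary route than the paper's. The paper chooses an $(h+1)$-edge $e$ at minimal distance from the split $\{a,b\}$, invokes Proposition~\ref{path} to obtain a path from $e$ to the split that is pointwise fixed by $G_{>h}$, and then argues via minimality and alternating squares that the edge of this path meeting $e$ must carry label $h$; the pendent-edge verification at the end is essentially the same commuting computation you give. Your argument bypasses Proposition~\ref{path} and the minimality machinery entirely: you observe that $X$ is $\langle G_{<h},G_{>h}\rangle$-invariant, so transitivity of $G$ forces $\rho_h$ to have a cycle crossing between $X$ and its complement, and this immediately hands you a vertex $x\in X$ adjacent via $\rho_h$ to a vertex $y$ fixed by $G_{>h}$. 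This is cleaner and self-contained; the paper's approach, on the other hand, ties the pendent edge to the split $\{a,b\}$ via an explicit path, which is in keeping with how the surrounding arguments in Section~\ref{sec:5} are organised, though that extra structure is not actually used here.
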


\begin{proof}
Let $h+1$ be the label of an edge $e=\{u,v\}$ in $\mathcal{G}$ such that the distance between $e$ and the split $\{a, b\}$ is minimal. By Proposition~\ref{path}, there exists a path connecting $e$ to the split $\{a,b\}$ that is fixed by $G_{>h}$. Say that this path is $b=u_0,u_1,\ldots,u_\ell=u$, for some $\ell$.   By minimality and by the fact that edges whose labels differ by two (or more) form an alternating square, $e$ is incident in this path to an edge labeled $h$, that is, $\{u_{\ell-1},u_\ell\}$ has label $h$.

If $e$ is not a pendent edge in the permutation representation graph of $\Gamma_{>h}$, then $u_\ell$ is on an edge having label $\kappa\ge h+2$. Since $G_{>h}$ fixes the above path, we have $$u_\ell^{\rho_h}=u_{\ell-1}=u_{\ell-1}^{\rho_\kappa}=u_\ell^{\rho_{h}\rho_{\kappa}}=(u_\ell^{\rho_\kappa})^{\rho_h}.$$
Thus $u_\ell^{\rho_\kappa}=u_\ell$, contradicting the fact that $u_\ell$ is on an edge having label $\kappa.$ 
\end{proof}

Since Proposition~\ref{h} (applied with $\ell=h$) gives an upper bound on $h$, it allows us to reduce the problem of determining an upper bound for the rank of $\Gamma=(G,S)$ to that of finding an upper bound for the rank of the SGGI \( \Gamma_{>h} \). 
However, $ G_{>h}$ is not necessarily transitive on $X=\{1,\ldots,n\}\setminus\mathrm{Fix}(G_{>h})$. To address this, we require an extension of the definition of  fracture graph, as given in Section~\ref{sec:fracturegraphs}, to the case of intransitive SGGIs.

An intransitive SGGI \( \Phi = (H, \{ \alpha_0, \ldots, \alpha_{w-1} \}) \) of rank \( w \) is said to have a {\it{\bf fracture graph}} if, for each $l\in \{0,\ldots, w-1\}$, the number of orbits of \( H_l \) exceeds that of \( H \). An \( l \)-edge \( \{x, y\} \) of a fracture graph is defined as a pair $\{x,y\}$, where $y=x\rho_l$ and where \( x \) and \( y \) belong to the same \( H \)-orbit but not to the same \( H_l \)-orbit.

When, for each $l\in \{0,\ldots, w-1\}$, there are at least two possible choices for the $l$-edge of a fracture graph, then $\Phi$ admits a {\it{\bf 2-fracture graph}}. 
On the other hand, if $e$ is the unique choice for an $l$-edge of a fracture graph, then $e$ is called a {\it{\bf split}}.

\subsection{The SGGI $\Gamma_{>h}$ admits a $2$-fracture graph}\label{sec:Gammah2fracture}

\begin{prop}[Proposition 5.27, \cite{2017CFLM}]\label{t}
Let $\Gamma=(G,S)$ be a transitive SGGI of rank $r$.
If $t\in\{0,\ldots,r-2\}$ is such that
\begin{itemize}
\item $t\leq \frac{n-|U|-1}{2}$, with $U= \{1,\ldots,n\}\setminus \mathrm{Fix}(G_{>t})$;
\item $\Gamma_{>t}$ has a $2$-fracture graph;
\item $G_{>t}$ acts intransitively on $U$;
\end{itemize}
then $r\leq (n-1)/2$. 
\end{prop}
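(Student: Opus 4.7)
The plan is to view $\Gamma_{>t}$ as an intransitive SGGI on its support $U$ and bound its number of edges via a $2$-fracture graph. Since $G_{>t}$ is intransitive on $U$, any $2$-fracture graph of $\Gamma_{>t}$ has its edges confined within individual $G_{>t}$-orbits and is therefore disconnected. Invoking Proposition~\ref{dis2F} (in the intransitive extension introduced immediately above the statement), I obtain a $2$-fracture graph $\mathcal{F}$ on $U$ with at least one tree component, every other component being a tree or containing at most one cycle (an alternating square). As $\mathcal{F}$ has exactly $2(r-t-1)$ edges and any component on $n_c$ vertices carries at most $n_c$ edges, with a strict deficit on every tree component,
\[
2(r-t-1) = |E(\mathcal{F})| \leq |U|-1.
\]
Combined with the hypothesis $2t \leq n-|U|-1$, this yields $2r \leq n$, i.e.\ $r \leq n/2$.

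A direct parity case analysis on $n$ and $|U|$ shows that, after integer rounding, this already gives $r \leq (n-1)/2$ in every subcase except when $n$ is even and $|U|$ is odd. In that residual borderline subcase, $r = n/2$ would force equality throughout: $t = (n-|U|-1)/2$ and $\mathcal{F}$ must consist of exactly one tree component, on an odd number of vertices, together with pairwise disjoint alternating-square components, each using a disjoint pair of labels drawn from $\{t+1,\ldots,r-1\}$.

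The main obstacle is ruling out this extremal configuration. My plan is to exploit the rigidity of $\mathcal{F}$ against the transitivity of $G$: setting $U' = \{1,\ldots,n\}\setminus U$, the transitivity of $G$ forces edges labelled in $\{0,\ldots,t\}$ to glue the at least $2t+1$ points of $U'$ to the components of $\mathcal{F}$, while the commuting property between each $\rho_j$ with $j \leq t$ and the two generators supporting an alternating-square component severely restricts how $\rho_j$ may act on the four vertices of that square. Pushing these restrictions, one should force either one of $\rho_0, \ldots, \rho_t$ to be a transposition (contradicting the two two-cycle content that each $\rho_{>t}$-generator is required to contribute to $\mathcal{F}$) or to violate the independence of $S$. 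The initial counting step giving $r \leq n/2$ is clean and immediate from Proposition~\ref{dis2F}; the delicate point, where I expect the bulk of the work to sit, is extracting this extra half-unit in the parity-critical extremal case.
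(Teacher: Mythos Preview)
Your edge-count via Proposition~\ref{dis2F} cleanly yields $2(r-t-1)\le|U|-1$, and together with the hypothesis on $t$ this gives $r\le n/2$; your parity reduction to the single residual case ($n$ even, $|U|$ odd) is also correct. The genuine gap is in that residual case. Your proposed contradiction there hinges on forcing some $\rho_j$ with $j\le t$ to be a transposition, but Proposition~\ref{t} carries \emph{no} hypothesis that $G\le\mathrm{Alt}(n)$: it is stated for an arbitrary transitive SGGI, so parity of permutations buys you nothing. The alternative ``violate independence'' route you allude to is too vague to assess, and your description of the extremal $\mathcal F$ (one tree plus alternating squares on disjoint label-pairs) does not by itself obstruct anything. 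There is also a minor issue upstream: you are invoking Proposition~\ref{dis2F} for an intransitive SGGI via the extended definition of $2$-fracture graph; this is plausible, but it is not literally what the cited proposition covers.

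The paper does not prove Proposition~\ref{t} itself---it is quoted from~\cite{2017CFLM}---but its proof of the closely analogous Proposition~\ref{uv} shows the intended mechanism, and it is rather different from yours. Instead of a global application of Proposition~\ref{dis2F}, one works orbit-by-orbit on $U$: choose a \emph{simple} fracture graph $\mathcal F$ of $\Gamma_{>t}$ with a balancing property (property~\textbf{P} in the proof of Proposition~\ref{uv}), and for each $G_{>t}$-orbit $X_s$ bound the number $|F_s|$ of fracture labels falling in $X_s$, using Proposition~\ref{frac2} when the restriction to $X_s$ has a $2$-fracture graph and a path/minimality argument (via Proposition~\ref{path}) otherwise. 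The extra half-unit then comes not from permutation parity but from a \emph{pendent-edge} observation (compare Lemma~\ref{pe}): since $G_{>t}$ fixes $\{1,\ldots,n\}\setminus U$ pointwise, some orbit $X_{s_1}$ carries a pendent $(t{+}1)$-edge, and the graphs in Table~\ref{T2F} admitting such a pendent edge all satisfy the sharper bound $|F_{s_1}|\le(|X_{s_1}|-1)/2$ rather than $|X_{s_1}|/2$. Summing over orbits then closes the gap. Your global edge-count does not isolate this pendent-edge saving, which is why it stalls at $n/2$.
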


By combining the previous proposition with the results from Section~\ref{2f}, we complete the analysis of the case when $\Gamma_{>h}$ admits a 2-fracture graph.

\begin{prop}\label{1splitNP}
Assume Hypotheses~$\ref{hypNP}$ and~$\ref{hypNP2}$.  
If $\Gamma_{>h}$ has a $2$-fracture graph, then $r \leq (n-1)/2$.
\end{prop}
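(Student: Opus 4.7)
The plan is to distinguish two cases according to whether $G_{>h}$ acts transitively on $X$.

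In the first case, where $G_{>h}$ acts intransitively on $X$, all three hypotheses of Proposition~\ref{t} (with $t=h$) are met: hypothesis~(1) follows from Proposition~\ref{h} applied with $\ell=h$, hypothesis~(2) is the assumption of the current proposition, and hypothesis~(3) is the case assumption. Proposition~\ref{t} then yields $r\le(n-1)/2$.

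In the second case, $G_{>h}$ is transitive on $X$, so $\Gamma_{>h}$ is itself a transitive SGGI of degree $|X|$ and rank $r-h-1$ admitting a $2$-fracture graph. By Proposition~\ref{frac2} this gives $r-h-1\le|X|/2$. The key point is that equality cannot hold: by Corollary~\ref{c2F}, equality would force the permutation representation graph of $\Gamma_{>h}$ to be one of the two ``ladder'' graphs in the first two rows of Table~\ref{T2F}, but a direct inspection of those graphs shows that neither admits a pendant edge of smallest label, contradicting Lemma~\ref{pe} applied to $\Gamma_{>h}$ (whose smallest label is $h+1$). Hence $r-h-1<|X|/2$. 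Combining this strict bound with $h\le(n-|X|-1)/2$ from Proposition~\ref{h}, we obtain $r<(n+1)/2$, yielding $r\le(n-1)/2$ whenever $n$ is odd.

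The remaining subcase is $n$ even together with $r=n/2$. Here the two bounds force $|X|$ odd, $h=(n-|X|-1)/2$, and $r-h-1=(|X|-1)/2$. At these equalities Proposition~\ref{h} pins down the permutation representation graph of $\Gamma_{<h+1}$ as one of (A), (B), (C) of Table~\ref{Th}, while Proposition~\ref{frac2} pins down that of $\Gamma_{>h}$ as one of the graphs (3)--(11) of Table~\ref{T2F} (up to duality) admitting a pendant edge labeled $h+1$. I would then carry out a case-by-case analysis of the compatible pairs, using Lemma~\ref{pe} to glue $\Gamma_{<h+1}$ and $\Gamma_{>h}$ along the mandatory $h$- and $(h+1)$-edges, and invoking the parity constraint $G\le\mathrm{Alt}(n)$ (in particular, that each $\rho_j$ has an even number of transpositions) to derive a contradiction in each configuration. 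The main obstacle will be this finite but delicate enumeration, which amounts to verifying that no admissible gluing produces a genuine SGGI of $\mathrm{Alt}(n)$ with the required rank.
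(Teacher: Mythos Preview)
Your approach is essentially the same as the paper's: split into the intransitive and transitive cases, invoke Proposition~\ref{t} in the former and Proposition~\ref{frac2} together with Lemma~\ref{pe} in the latter, and then treat the extremal situation $r=n/2$ by matching the graphs of Tables~\ref{Th} and~\ref{T2F}. The paper's case analysis is dispatched by a single clean observation you have not yet isolated: in every compatible gluing of a graph from Table~\ref{Th} with one from Table~\ref{T2F}, the split $\{a,b\}$ turns out to be the \emph{only} $i$-edge in the whole permutation representation graph, so $\rho_i$ would be a transposition, contradicting $G\le\mathrm{Alt}(n)$. Focusing on $\rho_i$ rather than on an unspecified $\rho_j$ should make your enumeration short.
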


\begin{proof}
Suppose that $\Gamma_{>h}$ has a 2-fracture graph. If $G_{>h}$ is intransitive on $X$, then, by Propositions~\ref{h} and \ref{t}, we have $r \leq (n-1)/2$.
If $G_{>h}$ is transitive on $X$, then, by Propositions~\ref{frac2} and \ref{pe},\footnote{To exclude the first two graphs in Table~\ref{T2F} we are using the fact that $\Gamma_{>h}$ has a pendent edge labeled $h+1$.} we obtain $
r - h - 1 \leq (|X| - 1)/2.$
Now, Proposition~\ref{h} implies that $r \leq \frac{n}{2}$.
Moreover, when equality holds and $r = \frac{n}{2}$, the permutation representation graph of $\Gamma_{>h}$ must be one of the graphs from Table~\ref{T2F} that contains a pendent edge, and the permutation representation graph of $\Gamma_{<h+1}$ is one of the graphs from Table~\ref{Th}.

A case-by-case analysis combining all possible graphs for $\Gamma_{<h+1}$ and $\Gamma_{>h}$ shows that there exists a unique $i$-edge—namely, the split $\{a, b\}$. However, this contradicts the fact that $\rho_i$ is an even permutation.
\end{proof}

\subsection{The SGGI $\Gamma_{>h}$ does not admit a $2$-fracture graph: notation}\label{sec:Gammahnot2fracture}
Now assume $\Gamma_{>h}$ does not have a $2$-fracture graph. Suppose that $j$, for $j>h$, is the label of a split  $\{c,d\}$ for $\Gamma_{>h}$, which may or may not be perfect. In addition suppose that $i$ and $j$ are labels of consecutive splits.\footnote{That is, no label between $i$ and $j$ is the label of a split.} 

For the $j$-split, we follow the notation in Section~\ref{sec:generalnotation}. Let $\{c,d\}$ be the split with label $j$. Let $O_c$ and $O_d$ be the $G_j$-orbits containing $c$ and $d$, respectively, and let $n_C$ and $n_D$ denote the sizes of $O_c$ and $O_d$. Let $C$ and $D$ be the permutation groups induced by $G_j$ in its action on $O_c$ and $O_d$. For each $l\in\{0,\ldots, r-1\}\setminus \{j\}$, let  $\rho_l=\gamma_l\delta_l$, where $\gamma_l$ and $\delta_l$ are the restrictions of $\rho_l$ to $O_c$ and $O_d$, respectively. Then 
$$C = \langle \gamma_l\, |\, l\in \{0,\ldots, r-1\}\setminus\{j\}\rangle,$$
$$D = \langle \delta_l \,| \,l\in \{0,\ldots, r-1\}\setminus\{j\}\rangle.$$
 Let 
 $$J_C= \{  l \in \{0,\ldots, r-1\}\setminus\{j\}\,|\,\gamma_l \textrm{ is not the identity}\},$$
 $$J_D= \{  l \in \{0,\ldots, r-1\}\setminus\{j\}\,|\,\delta_l\textrm{ is not the identity}\}.$$
In the remainder of this section, we consider two cases, depending on whether the split $\{c,d\}$ is perfect or not.
\subsection{The split $\{c,d\}$ is not perfect} 

In this section, we make an additional assumption.

\begin{hypothesis}\label{hypNP3}{\rm We assume that $i$ is the maximal label of an $l$-split satisfying the following property:
\begin{center}
 There exists a permutation $\rho_x$, with $x > l$, that acts non-trivially on both $G_l$-orbits.
\end{center}
This assumption is compatible with the fact that $\{a, b\}$ is a non-perfect $i$-split, as well as with Hypotheses~\ref{hypNP} and~\ref{hypNP2}.

Note also that if $\rho_g$ is a permutation that acts non-trivially on both $G_j$-orbits, then $g < j$. Indeed, since $j$ is the label of a non-perfect split and $j > i$, the maximality of $i$ implies that $g < j$.

Assume further that $g$ is the minimal label of a permutation acting non-trivially on both $G_j$-orbits.
}
\end{hypothesis}
We may summarise some of the information in Hypotheses~\ref{hypNP},~\ref{hypNP2} and~\ref{hypNP3} in the following figure.

\begin{center}
\begin{tikzpicture}[scale=1]

  \draw (0,0) circle (1cm);
  \fill (1,0) circle (2pt) node[above] {$a$};

  \draw (4,0) circle (1cm);
  \fill (3,0) circle (2pt) node[above] {$b$};
  \fill (5,0) circle (2pt) node[above] {$c$};

  \draw (8,0) circle (1cm);
  \fill (7,0) circle (2pt) node[above] {$d$};

  \draw (1,0) -- (3,0) node[midway, above] {$i$};
  \draw (5,0) -- (7,0) node[midway, above] {$j$};
\end{tikzpicture}
\end{center}

\begin{lemma}\label{g1}
Assume Hypotheses~$\ref{hypNP}$,~$\ref{hypNP2}$ and~$\ref{hypNP3}$.
If $r>  (n-1)/2$, then 
\begin{enumerate}
\item\label{eqa} $J_C=\{0,\ldots,j-1\}$ and $J_D=\{g,\ldots,r-1\}\setminus\{j\}$;
\item\label{eqb} $g>0$;
\item\label{eqc} for each $\ell\in \{g,\ldots,j-1\}$, $r-\ell-1\leq \frac{n-|Y_\ell|-1}{2}$ where $Y_\ell= \{1,\ldots,n\}\setminus \mathrm{Fix}(G_{<\ell})$ and $Y_g\subseteq O_c$;
\item\label{eqd}if $r-g-1=(n-|Y_g|-1)/2$, then $\Gamma_{g+1}$ has permutation representation graph isomorphic to one of the graphs in Table~$\ref{Tgg}$, with eventually some additional edges with label $j$ (the unique $j$-edge represented is the split $\{c,d\}$);
\item\label{eqe}if  $\ell\neq g$ and $r-\ell-1=\frac{n-|Y_\ell|-1}{2}$, then $j=r-1$ and $\Gamma_{>\ell}$ has a permutation representation graph containing the $(r-1)$-split, that is a path, fixed by \( G_{<\ell} \), with the sequence of labels $[\ell+1,\ell+2,\ldots, r-2,r-1,r-2, \ldots, \ell+2, \ell+1]$.
\end{enumerate}
\end{lemma}
\begin{proof} 
We start by proving~\eqref{eqa}. As $J_C$ contains $J_A\cup\{i\}$, $0\in J_C$, see the picture above.

If $j=r-1$  then, by Proposition~\ref{path}, $J_C=\{0,\ldots, r-2\}$, thus $J_C$ is an interval. Assume that $j\neq r-1$.

Suppose that $J_C$ is not an  interval. Hence, by Propositions~\ref{CCD} and~\ref{bothimp}, $J_D$ is an interval.
Hence, as $g$ is the minimal label of a permutation acting nontrivially on $O_d$,
$$J_D=\{g,\ldots, j-1\}.$$
Then, $\{0,\,r-1\}\subseteq  J_C$, which gives a contradiction  by Proposition~\ref{allinone}, because $r>(n-1)/2$. 
 Hence $J_C$ is an interval and applying Proposition~\ref{path} we get $J_C=\{0,\ldots,j-1\}$. Now, Proposition~\ref{path} implies that $J_D=\{g,\ldots,r-1\}\setminus\{j\}$. This shows part~\eqref{eqa}.

Next, we prove part~\eqref{eqb}. Arguing by contradiction, suppose $g=0$. In particular, $0\in J_C$ and $0\in J_D$. When $j\ne r-1$, we must have $r-1\in J_D$. Hence we obtain a contradiction from Proposition~\ref{allinone}, because $r>(n-1)/2$. When $j=r-1$, by~\cite[Proposition~5.19]{2017CFLM} (applied to the dual SGGI), we get $r\leq  (n-1)/2$, which is again a  contradiction.  

The first statement in~\eqref{eqc}, ~\eqref{eqd} and~\eqref{eqe} are the dual of Proposition~\ref{h}. The inclusion $Y_g\subseteq O_c$ follows from the minimality of $g$.

\end{proof}

\begin{table}[htbp]
\[\begin{array}{|l|}
\hline
(A')\; \xymatrix@-1.3pc{*+[o][F]{}\ar@{-}[rr]^g &&*+[o][F]{} \ar@{-}[rr]^{g+1}&&*+[o][F]{} \ar@{.}[rr] &&*+[o][F]{}\ar@{-}[rr]^{r-2} &&*+[o][F]{c}\ar@{-}[rr]^{r-1} &&*+[o][F]{d}\ar@{-}[rr]^{r-2}&&*+[o][F]{} \ar@{.}[rr]&&*+[o][F]{} \ar@{-}[rr]^{g+1}&& *+[o][F]{} \ar@{-}[rr]^{g}&&*+[o][F]{}  }

\\[5pt]

(B')\;
\xymatrix@-1.3pc{*+[o][F]{}\ar@{-}[rr]^{j-1}  &&*+[o][F]{c} \ar@{-}[rr]^j&&*+[o][F]{d}\ar@{-}[rr]^{j+1} \ar@{-}[d]_{j-1} &&*+[o][F]{} \ar@{.}[rr]\ar@{-}[d]_{j-1}&&*+[o][F]{}\ar@{=}[d]_{j-1}^k\ar@{-}[rr]^{r-2} &&*+[o][F]{}\ar@{-}[rr]^{r-1}\ar@{=}[d]_{j-1}^k &&*+[o][F]{} \ar@{=}[d]_{j-1}^k\\
&&&&*+[o][F]{}\ar@{-}[rr]_{j+1} &&*+[o][F]{} \ar@{.}[rr]&&*+[o][F]{}\ar@{-}[rr]_{r-2} &&*+[o][F]{}\ar@{-}[rr]_{r-1}&&*+[o][F]{}
 }
\qquad g=j-1, \textrm{for some }j<k<r-2 \\[5pt]
  
(C')\; \xymatrix@-1.3pc{*+[o][F]{}\ar@{-}[rr]^{j-1}  &&*+[o][F]{c} \ar@{-}[rr]^j&&*+[o][F]{d}\ar@{-}[rr]^{j+1} \ar@{-}[d]_{j-1} &&*+[o][F]{} \ar@{.}[rr]\ar@{-}[d]_{j-1}&&*+[o][F]{}\ar@{-}[d]_{j-1}\ar@{-}[rr]^{r-2} &&*+[o][F]{}\ar@{-}[rr]^{r-1}\ar@{-}[d]_{j-1}&&*+[o][F]{} \ar@{-}[d]_{j-1}\\
&&&&*+[o][F]{}\ar@{-}[rr]_{j+1} &&*+[o][F]{} \ar@{.}[rr]&&*+[o][F]{}\ar@{-}[rr]_{r-2} &&*+[o][F]{}\ar@{-}[rr]_{r-1}&&*+[o][F]{}
 }

 \qquad g=j-1
  \\[5pt]
\hline
\end{array}\]
\caption{Permutation representation graphs  for Proposition~\ref{g1}~\eqref{eqd}}\label{Tgg}
\end{table}

\begin{lemma}\label{g2}
Assume Hypotheses~$\ref{hypNP}$,~$\ref{hypNP2}$ and~$\ref{hypNP3}$.
We have $n\ge 6$ and, if  $r>  n/2$, then $g>h$.
\end{lemma}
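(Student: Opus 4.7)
My plan is to prove the two assertions of Lemma~\ref{g2} separately: first the unconditional bound $n\ge 6$ by a direct counting argument, and then the conditional inequality $g>h$ by contradiction assuming $r>n/2$. For $n\ge 6$, I will observe that the four endpoints $a,b,c,d$ of the two splits are forced to be distinct (with $a\in O_a$ and $b,c,d\in O_b$, the latter because $c,d\in X\subseteq O_b$, and $c\ne d$), and that $\rho_i$ and $\rho_j$ are even so each carries at least one further $2$-cycle beyond the split cycle. A short case analysis on where those additional $2$-cycles can lie, together with transitivity of $G$ and $i<h<j$, will yield at least six vertices.

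For the main claim, the plan is to assume $g\le h$ and to derive $r\le n/2$, contradicting $r>n/2$. I will combine the bound from Proposition~\ref{h} at $\ell=h$, namely $|X|\le n-2h-1$, with the bound from Lemma~\ref{g1}\eqref{eqc} at $\ell=g$, namely $|Y_g|\le n-2r+2g+1$. Adding and rearranging gives
\[
r \le n + g - h - \tfrac12\bigl(|X|+|Y_g|\bigr),
\]
so a lower bound $|X|+|Y_g|\ge n$ would imply $r\le n/2+(g-h)\le n/2$, as desired.

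The hard part will be establishing $X\cup Y_g=\{1,\ldots,n\}$. I intend to use three ingredients: the inclusion $Y_g\subseteq O_c$ from Lemma~\ref{g1} and the inclusion $X\subseteq O_b$ from $G_{>h}$ fixing $O_a$ pointwise; the structural fact that $O_a\subseteq O_c$ and $O_d\subseteq O_b$ (because $\rho_j$ has $\{c,d\}$ as its unique cross-$G_j$-orbit $2$-cycle, and symmetrically for $\rho_i$), which partitions $\{1,\ldots,n\}$ into $O_a$, the middle $M=O_b\cap O_c$, and $O_d$; and the commutation of $G_{<g}$ with $G_{>h}$ forced by $g\le h$. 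A hypothetical point $p\notin X\cup Y_g$ is fixed by both $G_{<g}$ and $G_{>h}$, and a path-tracing argument via Proposition~\ref{path} (anchored at the pendant edge from Lemma~\ref{pe}) should show that no such $p$ is compatible with the transitivity of $G$ and the presence of the two splits $\{a,b\}$ and $\{c,d\}$. The technical subtlety is that the intermediate generators $\rho_g,\ldots,\rho_h$ are absent from $\langle G_{<g},G_{>h}\rangle$ but still move points in the middle region, so controlling their action on any alleged fixed point is the hardest step and will likely require a separate treatment of the borderline case $g=h$ (where the covering argument via $\ell=g,\ell'=h+1$ is unavailable when $j=h+1$).
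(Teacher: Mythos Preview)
Your treatment of $n\ge 6$ is fine and essentially what the paper asserts without proof.

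For the main implication, however, there is a real gap. Your strategy is to apply Proposition~\ref{h} at $\ell=h$ and Lemma~\ref{g1}\eqref{eqc} at $\ell=g$; the arithmetic is correct and would indeed give $r\le n/2+(g-h)$ \emph{provided} you can establish the covering $X\cup Y_g=\{1,\ldots,n\}$. But this covering is precisely the crux, and your sketch (``path-tracing via Proposition~\ref{path} anchored at Lemma~\ref{pe}'') does not come close to proving it. A point $p$ moved only by some $\rho_l$ with $g\le l\le h$ lies in $\mathrm{Fix}(G_{<g})\cap\mathrm{Fix}(G_{>h})$ and hence outside both $X$ and $Y_g$; nothing you have written rules out such points. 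Note that $O_a\subseteq\mathrm{Fix}(G_{>h})$, so to cover $O_a$ you would need $O_a\subseteq Y_g$, i.e.\ every point of $O_a$ is moved by some $\rho_l$ with $l<g$. Since $J_A=\{0,\ldots,h\}\setminus\{i\}$ and $g\le h$, the generators $\alpha_g,\ldots,\alpha_h$ act nontrivially on $O_a$, and there is no a priori reason a vertex of $O_a$ could not be moved only by these. Your appeal to ``commutation of $G_{<g}$ with $G_{>h}$'' does not help here, because the obstruction lives in the intermediate range $\{g,\ldots,h\}$, exactly as you acknowledge.

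The paper avoids this difficulty by choosing a different $\ell$: instead of $\ell=g$, it takes $\ell=h+1$ (when $h+1<j$). Then the covering $X\cup Y_{h+1}=\{1,\ldots,n\}$ is automatic, since every point is moved by some generator and hence by $G_{\le h}$ or $G_{>h}$. The price is a weaker inequality $r\le (n+1)/2$, and the remaining slack (the possibility $|X\setminus \overline{L}|=1$) is then eliminated by invoking the explicit graphs of Tables~\ref{Th} and~\ref{Tgg}, which force one of $\rho_i,\rho_j$ to be a transposition. The case $h+1=j$, where $\ell=h+1$ falls outside the range of Lemma~\ref{g1}\eqref{eqc}, is handled separately by a direct path-counting argument in $O_a$ and $O_d$. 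Your proposed case split on $g=h$ versus $g<h$ does not align with this, and you have not indicated how either branch would be completed. If you can genuinely prove the covering $X\cup Y_g=\{1,\ldots,n\}$ by elementary means, that would streamline the argument; otherwise you should follow the paper's route.
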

\begin{proof}
The existence of two non-perfect splits implies $n\ge 6$.

Suppose that $r> n/2$ and $g\leq h$. By Hypothesis~\ref{hypNP2}, $\rho_h$ is defined as the maximal permutation acting non-trivially on both $G_i$-orbits. In particular, $h\in J_A\cap J_B$. By Hypothesis~\ref{hypNP2} $\rho_g$ is defined as the minimal permutation acting non-trivially on both $G_j$-orbits.
Moreover, by Lemma~\ref{g1}~\eqref{eqa}, $J_C=\{0,\ldots, j-1\}$ and $J_D=\{g,\ldots,  r-1\}\setminus\{j\}$. 
In particular, we also have $h\in J_C\cap J_D$.  
Therefore, $\rho_h$ acts nontrivially in each of the three $G_{i,j}$-orbits.
By Proposition~\ref{h} (applied with $\ell=h$), we have 
\begin{equation}\label{monday1}h\leq \frac{n-|X|-1}{2},\end{equation}  where $X=\{1,\ldots,n\}\setminus\mathrm{Fix}(G_{>h})$. We consider separately the cases $h+1\ne j$ and $h+1=j$.

\smallskip

\noindent\textsc{Case $h+1\neq j$.}

\smallskip

\noindent As $g\leq h< h+1<j$,  from Lemma~\ref{g1}~\eqref{eqc}, we have  
\begin{equation}\label{monday2}
r-(h+1)-1\leq \frac{n-|L|-1}{2},
\end{equation} where $L=\{1,\ldots,n\}\setminus\mathrm{Fix}(G_{<h+1})$. 
Let $\bar{L}=\{1,\ldots,n\}\setminus L=\mathrm{Fix}(G_{<h+1})$.
We claim that $\bar{L}\subseteq X$. Indeed, if $p\in \bar{L}$, then  $p\in {\rm Fix}(G_{<h+1})$. So, $p\notin {\rm Fix}(G_{\geq h+1})={\rm Fix}(G_{> h})=\{1,\ldots,n\}\setminus X$ and hence $p\in X$.
Combining~\eqref{monday1} and~\eqref{monday2}, we get 
\begin{equation}\label{monday3}r\leq \frac{n-|X\setminus\bar{L}|+2}{2}.\end{equation}
 Let us now prove that $X\setminus\bar{L}=\overline{ \mathrm{Fix}(G_{>h}) \cup \mathrm{Fix}(G_{<h+1})}$ is nonempty.
Observe that the permutation representation graph $\mathcal{G}$ of $\Gamma$ contains at least one pair of adjacent edges with labels $h$ and $h+1$, for otherwise $G$ would be a direct product. Let $p$ be the meeting point of such a pair of edges. Then $p \notin \mathrm{Fix}(G_{>h}) \cup \mathrm{Fix}(G_{<h+1})$. This implies that
$\{1, \ldots, n\} \ne \mathrm{Fix}(G_{>h}) \cup \mathrm{Fix}(G_{<h+1})$, and hence $X \ne \bar{L}$. In particular, $|X \setminus \bar{L}| \ge 1$. 

Since we are assuming $r > n/2$, from~\eqref{monday1} and~\eqref{monday3}, we deduce that $|X \setminus \bar{L}| = 1$, $r-(h+1)-1= \frac{n-|L|-1}{2}$ and $h = (n - |X| - 1)/2$. 


From the equality $r - (h+1) - 1 = \frac{n - |L| - 1}{2}$,
it follows from Lemma~\ref{g1}~\eqref{eqe} that \( j = r - 1 \), and the permutation representation of $\Gamma_{>h}$ is a path \( \mathcal{P} \), fixed by \( G_{<h+1} \), with the sequence of labels
\[
[h+1, h+2, \ldots, r-2, r-1, r-2, \ldots, h+2, h+1].
\]
Since \( \rho_{r-1} \) must be even, it must act nontrivially on \( L \). This is only possible if \( h+1 = r - 2 \). 
Hence, \( G_{>h}=\langle \rho_{r-2}, \rho_{r-3} \rangle\). As $G$ is even, this implies that \( |X| \geq 5 \). From the equality \( h = \frac{n - |X| - 1}{2} \), we then obtain \( r \leq \frac{n}{2} \), which is a contradiction.


\smallskip

\noindent\textsc{Case $h+1=j$. }

\smallskip

\noindent From the previous case applied dually, we may assume  $g-1= i$. This implies $g=h=j-1=i+1$.

There are two possibilities for $J_D$.  Either $J_D= \{r-2\}$ (when $h=r-2$) or $J_D=\{h\}\cup \{h+2,\ldots, r-1\}$ (when $h<r-2$). 

If $J_D=\{r-2\}$,  then $G_{>h}=\langle \rho_{r-1}\rangle$. Since $G$ consists of even permutations, we deduce $|X|\geq 4$. 
Then,~\eqref{monday1} gives $r-2\leq \frac{n-4-1}{2}$, which is a contradiction. 
A dual argument gives $J_A\neq \{1\}$.

It remains to consider the case $J_A=\{0,\ldots, h-2\}\cup\{h\}$ and $J_D=\{h\}\cup \{h+2,\ldots, r-1\}$. Then, there are two disjoint paths $\mathcal{P}_1$ and $\mathcal{P}_2$, in $O_a$, with all labels from $0$ to $h-2$. In $O_d$ there are also two disjoint paths, $\mathcal{P}_3$ and $\mathcal{P}_4$, containing all labels from $h+2$ to $r-1$. We deduce that
$$2(r-3)\leq (|\mathcal{P}_1|-1)+(|\mathcal{P}_2|-1)+(|\mathcal{P}_3|-1)+(|\mathcal{P}_4|-1).$$
Since $\mathcal{P}_1\cup\mathcal{P}_2\cup\mathcal{P}_3\cup\mathcal{P}_4\subseteq \{1,\ldots,n\}\setminus (O_b\cap O_c)$, we get
$2(r-3)\le n-4-|O_b\cap O_c|$.
Since $|O_b\cap O_c|\geq 2$ (as $\rho_h$ permutes a pair of vertices in this set), we obtain 
$$2(r-3)\leq (n-2)-4.$$
Hence, $r\leq n/2$, contradicting the assumption that $r>n/2$.
\end{proof}

In light of Lemma~\ref{g2}, we may assume $h< g$ and hence the following figure illustrates the situation we need to address, where $G_{\{h+1,\ldots, g-1\}}$ acts on $X\cap Y$, where $X= \{1,\ldots,n\}\setminus \mathrm{Fix}(G_{>h})$  and $Y= \{1,\ldots,n\}\setminus \mathrm{Fix}(G_{<g})$. In addition, as $i$ and $j$ are labels of consecutive splits, $\Gamma_{\{h+1\ldots, g-1\}}$ has a 2-fracture graph.
\begin{center}
\begin{tikzpicture}[scale=0.6]
    \node (v1) at (0,0) {$\bullet$};
    \node (v2) at (1.5,0) {$\bullet$};
    \node (v3) at (3,0) {$\bullet$};
    \node (v4) at (4.5,0) {$\bullet$};
    \node (v5) at (6,0) {$\bullet$};
    \node (v6) at (7.5,0) {$\bullet$};
    \node (v7) at (9,0) {$\bullet$};
    \node (v8) at (10.5,0) {$\bullet$};
    \node (v9) at (12,0) {$\bullet$};
    \node (v10) at (13.5,0) {$\bullet$};
    \node (v11) at (15,0) {$\bullet$};
    \node (v12) at (16.5,0) {$\bullet$};
    
    \draw (v1) -- node[above]{$h$} (v2);
    \draw[dotted] (v2) -- (v3);
    \draw (v3) -- node[above]{$i$} (v4);
    \draw[dotted] (v4) -- (v5);
    \draw (v5) -- node[above]{$h$} (v6);
    \draw[white] (v6) -- (v6); 
    \draw[dotted] (v6) -- (v7);
    \draw (v7) -- node[above]{$g$} (v8);
    \draw[dotted] (v8) -- (v9);
    \draw (v9) -- node[above]{$j$} (v10);
    \draw[dotted] (v10) -- (v11);
    \draw (v11) -- node[above]{$g$} (v12);
    
    \draw[thick] (4,0) ellipse (5 and 1.5);
    \node at (4.5,2) {$Y$};
    
    \draw[thick] (12.5,0) ellipse (5 and 1.5);
    \node at (10.5,2) {$X$};
    
\end{tikzpicture}
\end{center}

Similarly to the proof of Proposition~\ref{t} given in \cite[Proposition~5.27]{2017CFLM}, let us prove the following.

\begin{prop}\label{uv}
Let $\Gamma=(G,S)$ be a transitive SGGI.
Suppose that there exist $u,\,v\in\{0,\ldots, r-1\}$ such that
\begin{enumerate}
\item\label{enumerate1} $0<u<v<r-1$;
\item\label{enumerate2} $u\leq \frac{n-|U|-1}{2}$ with $U= \{1,\ldots,n\}\setminus \mathrm{Fix}(G_{>u})$;
\item\label{enumerate3} $r-v-1\leq  \frac{n-|V|-1}{2}$ with $V= \{1,\ldots,n\}\setminus \mathrm{Fix}(G_{<v})$;
\item\label{enumerate4} $\Gamma_{\{u+1,\ldots,v-1\} }$ has a $2$-fracture graph and 
\item\label{enumerate5} $G_{\{u+1,\ldots,v-1 \}}$  acts intransitively on $U\cap V$.
\end{enumerate}
Then $r\leq \frac{n-2}{2}$. 
\end{prop}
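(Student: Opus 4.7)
The plan is to first derive the preliminary bound $r\le(n+1)/2$ by combining the three hypothesised inequalities with an edge count in the 2-fracture graph of $\Gamma_{\{u+1,\ldots,v-1\}}$, and then to sharpen this to $r\le(n-2)/2$ by exploiting the structural results of Section~\ref{2f}. First I would set $H=G_{\{u+1,\ldots,v-1\}}$ and note that every $\rho_\ell$ with $u<\ell<v$ belongs to $G_{>u}\cap G_{<v}$, so fixes $\{1,\ldots,n\}\setminus(U\cap V)$ pointwise; thus $H$ acts on $U\cap V$. Since $G$ is transitive, $\mathrm{Fix}(G_{>u})\cap\mathrm{Fix}(G_{<v})=\mathrm{Fix}(G)=\emptyset$, so $U\cup V=\{1,\ldots,n\}$ and $|U\cap V|=|U|+|V|-n$.

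By hypothesis~(\ref{enumerate5}), $H$ is intransitive on $U\cap V$, and by~(\ref{enumerate4}), $\Gamma_{\{u+1,\ldots,v-1\}}$ admits a 2-fracture graph. Any such 2-fracture graph is disconnected, with each component lying inside a single $H$-orbit; the intransitive analogue of Proposition~\ref{dis2F}, which follows component-by-component from its proof, produces a 2-fracture graph having at least one tree component and with every other component either a tree or containing a single cycle (an alternating square). Counting edges gives $2(v-u-1)\le|U\cap V|-1$. Summing this bound with~(\ref{enumerate2}) and~(\ref{enumerate3}) yields
\[
r-2 \;=\; u+(v-u-1)+(r-v-1) \;\le\; \frac{n-|U|-1}{2}+\frac{|U\cap V|-1}{2}+\frac{n-|V|-1}{2} \;=\; \frac{n-3}{2},
\]
hence $r\le(n+1)/2$, which falls short of the claim by $3/2$.

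To close the remaining gap I would split into cases. If a 2-fracture graph can be chosen with at least two tree components, the edge bound improves to $2(v-u-1)\le|U\cap V|-2$, which combined with a careful integer/parity analysis of~(\ref{enumerate2}) and~(\ref{enumerate3}) (exploiting that $|U|$, $|V|$ and $|U\cap V|$ are linked by $|U\cap V|=|U|+|V|-n$) yields $r\le(n-2)/2$. Otherwise there is exactly one tree component and at least one cycle component, and Corollary~\ref{c2F} forces every cycle component to be a ladder on an even number $n_j$ of vertices whose $n_j/2$ labels form a consecutive subinterval of $\{u+1,\ldots,v-1\}$, with all rung edges carrying a single label and with both 2-fracture edges of each such label contained in the same component. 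Using the commuting relations $\rho_u\rho_\ell=\rho_\ell\rho_u$ for $\ell\ge u+2$ and $\rho_v\rho_\ell=\rho_\ell\rho_v$ for $\ell\le v-2$, this ladder structure determines the labels of the pendant edges at the boundaries of $\Gamma_{<u+1}$ and $\Gamma_{>v-1}$; comparing with the extremal graphs of Table~\ref{T2F} then shows that equality in~(\ref{enumerate2}) and in~(\ref{enumerate3}) cannot hold simultaneously with the ladder configuration, and the resulting strictness supplies the missing $3/2$.

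The hardest step will be the exactly-one-tree case: the cycle components may occupy any consecutive subinterval of $\{u+1,\ldots,v-1\}$, so one must check uniformly, across all such configurations, that the forced pendant-edge labels at the boundaries of $\Gamma_{<u+1}$ and $\Gamma_{>v-1}$ are incompatible with at least one of the extremal families in Table~\ref{T2F} attaining equality in~(\ref{enumerate2}) or~(\ref{enumerate3}).
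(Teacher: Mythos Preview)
Your opening calculation is fine and yields $r\le(n+1)/2$, but the two-case refinement does not close the remaining gap.

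In the ``at least two tree components'' case, the improved edge count $2(v-u-1)\le|U\cap V|-2$ combined with (\ref{enumerate2}) and (\ref{enumerate3}) gives only $r\le n/2$, not $r\le(n-2)/2$. Your appeal to ``integer/parity analysis'' does not recover the missing unit: all three inequalities can hold with equality simultaneously (for suitable parities of $|U|$, $|V|$, $n$), so no parity obstruction forces the extra slack. In the ``exactly one tree'' case, your invocation of Corollary~\ref{c2F} is not justified. That corollary applies to a \emph{transitive} SGGI possessing a \emph{connected} $2$-fracture graph that is not a tree; a cycle component of your disconnected $2$-fracture graph for the intransitive group $H$ need not span an entire $H$-orbit, and even when it does, the restriction of $H$ to that orbit need not have all of $\{u+1,\ldots,v-1\}$ as independent generators, so the rank-equals-$n_j/2$ ladder conclusion does not follow. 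Proposition~\ref{dis2F} only tells you the unique cycle is an alternating square, which is far weaker than the ladder structure you assume.

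The paper's argument is organised quite differently. It does not analyse components of a $2$-fracture graph at all; instead it works with the $H$-orbits $X_1,\ldots,X_c$ and a carefully chosen \emph{simple} fracture graph $\mathcal{F}$ satisfying a balancing property~\textbf{P}. The key device---absent from your plan---is to take $(u,v)$ with $v-u$ \emph{minimal} among all pairs satisfying the hypotheses. Any $H$-orbit on which the induced action fails to have a $2$-fracture graph is shown, via Proposition~\ref{path} and property~\textbf{P}, to produce a strictly smaller pair $(u,y)$ or $(y',v)$ still satisfying the hypotheses, contradicting minimality. Once every $H$-orbit carries a $2$-fracture graph, Proposition~\ref{frac2} bounds each $|F_s|$ by $|X_s|/2$, and two distinguished orbits (those containing a pendant $(u{+}1)$- and $(v{-}1)$-edge) satisfy the sharper bound $(|X_s|-1)/2$, which supplies exactly the additional slack you are missing.
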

\begin{proof}
Suppose that \( u \) and \( v \) satisfy the hypotheses of this proposition and that \( v - u \) is minimal.  
Define  
$
H = G_{\{u+1, \ldots, v-1\}}$, $ \Phi = \Gamma_{\{u+1, \ldots, v-1\}},
$
and let \( \mathcal{G} \) be the permutation representation graph of \( \Gamma \).  

In what follows, the non-trivial \( H \)-orbits are denoted by \( X_1, \ldots, X_c \), while the corresponding group actions of \( H \) on each of these sets are denoted by \( G^{(1)}, \ldots, G^{(c)} \).  
Furthermore, let \( \mathcal{G}^{(1)}, \ldots, \mathcal{G}^{(c)} \) be the corresponding permutation representation graphs.  

Consider a (simple) fracture graph \( \mathcal{F} \) of \( \Phi \), i.e., a graph with \( |U \cap V| \) vertices, where the number of edges is equal to the rank of \( \Phi \).
Each \( l \)-edge of \( \mathcal{F} \) connects vertices in different \( G_l \)-orbits within the same $H$-orbit.  
For each set $X_s$, with $s\in\{1,\ldots, c\}$, denote by $I_s$ the set of labels of edges in $X_s$, and by $F_s$ the set of labels of edges of $\mathcal{F}$ within $X_s$.
Clearly, we have \( F_s \subseteq I_s \).  Choose \( \mathcal{F} \) such that it satisfies the following property:  
\begin{enumerate}
    \item[\textbf{P}] If \( l \in F_s \) is the label of the unique \( l \)-edge in one component swapping vertices in different \( G_l \)-orbits, then no other component has more than one pair of vertices in different \( G_l \)-orbits.\footnote{When \(\mathbf{P}\) holds, another component must contain exactly one \(l\)-edge connecting different \(G_l\)-orbits. These \(l\)-edges are splits within each \(G_l\)-orbit, ensuring the existence of paths with consecutive labels, which are crucial to the proof.
}
\end{enumerate}

We have that \( G^{(s)} \) is generated by a set of involutions (not necessarily independent) with labels in \( I_s \).  
The subset of involutions with labels in \( F_s \) is independent, since \( F_s \) corresponds to a subset of labels of edges in \( \mathcal{F} \).  

Let us bound the set of labels of each component of the fracture graph \( \mathcal{F} \).  
If \( \mathcal{G}^{(s)} \) admits a 2-fracture graph with set of labels \( F_s \), then,  by Proposition~\ref{frac2}, we have
\[
|F_s| \leq \frac{|X_s|}{2}.
\]  
When two components have exactly the same (labeled) permutation representation graph, since $I_s=I_{s'}$ and $|X_s|=|X_{s'}|$, we have
\[
|F_s \cup F_{s'}| = |F_s| \leq |X_s|-1=\frac{|X_s| + |X_{s'}|}{2}-1.
\]

Now, suppose that a component \( \mathcal{G}^{(s)} \) is not a copy of any other component and does not admit a 2-fracture graph.  
Then, there exists an edge \( e \) with label \( l \in F_s \) such that \( \rho_l \) swaps only one pair of vertices of \( X_s \), in different \( G_l \)-orbits.  
Let \( x \) and \( y \) denote, respectively, the minimal and maximal label of an edge in \( \mathcal{G}^{(s)} \).  

By Proposition~\ref{path}, there exist two paths \( \mathcal{P}_1 \) and \( \mathcal{P}_2 \) in $X_s$ such that  \( \mathcal{P}_1 \) contains all labels from \( l-1 \) to \( x \), and \( \mathcal{P}_2 \) contains all labels from \( l+1 \) to \( y \).  In particular, we have $I_s = \{x, x+1, \ldots, y-1, y\}$. 
Define \( P \) as the set of vertices of \( \mathcal{P} = \mathcal{P}_1 \cup \{e\} \cup \mathcal{P}_2 \).

Now, let \( \mathcal{G}^{(s')} \) be a component adjacent to \( \mathcal{G}^{(s)} \) in \( \mathcal{G} \).
Since \( \mathcal{G}^{(s)} \) cannot be a copy of \( \mathcal{G}^{(s')} \), using \textbf{P}, it follows that  
$x \in \{u+1, u+2\}$ or $ y \in \{v-1, v-2\}$.
  
In what follows, we show that when  \( x \in \{u+1, u+2\} \) we get a contradiction.  
Moreover a dual argument also leads to a contradiction when \( y \in \{v-1, v-2\} \).  
We analyze the cases \( x = u+2 \) and \( x = u+1 \) separately.

\smallskip

\noindent\textsc{Case  $x = u+2$.}

\smallskip
  
\noindent In this case, \( \mathcal{G}^{(s')} \) contains a path \( \mathcal{P}' \) that is a copy of \( \mathcal{P} \).

\[
\xymatrix@-1.3pc{
  &&*+[o][F]{} \ar@{-}[rr]^{u+2}\ar@{-}[d]^u&&*+[o][F]{} \ar@{-}[rr]^{u+3}\ar@{-}[d]^u&&*+[o][F]{} \ar@{~}[rrrr]\ar@{-}[d]^u&&&&*+[o][F]{}\ar@{-}[rr]^y\ar@{-}[d]^u&&*+[o][F]{}\ar@{-}[d]^u& \mathcal{G}^{(s)}\\
 *+[o][F]{\alpha}\ar@{-}[rr]_{u+1}&&*+[o][F]{} \ar@{-}[rr]_{u+2}&&*+[o][F]{} \ar@{-}[rr]_{u+3}&&*+[o][F]{} \ar@{~}[rrrr]&&&&*+[o][F]{}\ar@{-}[rr]_y&&*+[o][F]{}&\mathcal{G}^{(s')}
}
\]

Let \( C \) be the set of vertices of \( \mathcal{P} \cup \mathcal{P}' \).  
We have \( 2(y - (u+1)) \leq |C| - 2 \), hence, by hypothesis~\eqref{enumerate2}, we have \( y \leq u + \frac{|C|}{2} \leq \frac{n - |U\setminus C| - 1}{2} \).

The vertex \( \alpha \) is fixed by \( G_{>y} \) and $\{1,\ldots,n\}\setminus\mathrm{Fix}(G_{>y})\subseteq U\setminus C$ 
(recall that \( y \) is the maximal label of  \( \mathcal{G}^{(s)} \)).  This shows that $y$ and $v$ satisfy all the hypotheses of this proposition.
Thus, we obtain a contradiction with the minimality of \( v - u\).

\smallskip

\noindent\textsc{Case  $x = u+1$.}

\smallskip

\noindent  Assume that any component containing a unique \( l \)-edge between vertices in different \( G_l \)-orbits has minimal label \( u+1 \). Let \( y \) be the maximal label in  \( \mathcal{G}^{(s)} \) and let \( \mathcal{P} \) be as previously defined.

Since \( \Phi \) has a 2-fracture graph, there exists another component \( \mathcal{G}^{(s')} \) (which might not be adjacent to \( \mathcal{G}^{(s)} \)) containing an \( l \)-edge \( e' \) between vertices in different $G_l$-orbits. By \textbf{P}, this \( l \)-edge cannot be in an alternating square inside \( X_{s'} \). By assumption, the minimal label in \( X_{s'} \) is also \( u+1 \). Let \( y' \) be the maximal label in \( X_{s'} \). Without loss of generality, assume \( y' \geq y \).
Then, by Proposition~\ref{path}, there exists a path \( \mathcal{P}'_1 \) containing all labels from \( l-1 \) to \( u+1 \) and another path \( \mathcal{P}'_2 \) containing all labels from \( l+1 \) to \( y \), both in \( \mathcal{G}^{(s')} \). Define \( \mathcal{P}' = \mathcal{P}'_1 \cup \{e'\} \cup \mathcal{P}'_2 \).

Since an $l$-edge in $X_{s'}$ is not in an alternating square, $\beta$ (see the picture below) is the unique vertex in \( \mathcal{P}' \) that is not fixed by \( G_{>y} \). Let \( C' \) be the set of vertices of \( \mathcal{P} \cup \mathcal{P}' \).
\[
\xymatrix@-1.3pc{
  *+[o][F]{} \ar@{-}[rr]^{u+1}&&*+[o][F]{} \ar@{~}[rrrr]&&&&*+[o][F]{}\ar@{-}[rr]^y&&*+[o][F]{}&\mathcal{P} \textrm{ in }X_s&&&&\\
  *+[o][F]{} \ar@{-}[rr]^{u+1}&&*+[o][F]{} \ar@{~}[rrrr]&&&&*+[o][F]{}\ar@{-}[rr]^y&&*+[o][F]{\beta} \ar@{~}[rrrr]&&&&*+[o][F]{}\ar@{-}[rr]^{y'}&&*+[o][F]{}&\mathcal{P'}\textrm{ in }X_{s'}
}
\]
Therefore, \( G_{>y} \) fixes the set \( C' \setminus \{\beta\} \). Hence, \( 2(y-u) \leq |C'| - 2 \), leading to \( y \leq \frac{n - |U\setminus (C'\setminus\{\beta\})| - 1}{2} \). Consequently, \(\{1,\ldots,n\}\setminus \mathrm{Fix}(G_{>y})\subseteq  U\setminus (C'\setminus\{\beta\})  \), contradicting as in the previous case the of minimality of \( v-u\).

\smallskip

By the previous two cases, the minimal (respectively maximal) label of a component that does not have a $2$-fracture graph is neither $u+1$ nor $u+2$ (respectively, neither $v-1$ nor $v-2$).

 Since \( G_{>u} \) fixes \( \{1, \ldots, n\} \setminus U \), there exists a component \( \mathcal{G}^{(s_1)} \) that has a pendent edge with label \( u+1 \). Hence, this component must have a 2-fracture graph; then, by Proposition~\ref{frac2}, \( |F_{s_1}| \leq \frac{|X_{s_1}| - 1}{2} \). Similarly, there is another component \( \mathcal{G}^{(s_2)} \) that has a pendent edge with label \( v-1 \), and therefore \( |F_{s_2}| \leq \frac{|X_{s_2}| - 1}{2} \). Consequently,
$v - u = \sum_{s=1}^{c} |F_s| \leq \frac{|U \cap V| - 2}{2}$.
Therefore, from this inequality and from~\eqref{enumerate2} and~\eqref{enumerate3}, we have
$r \leq \frac{2n - (|U| + |V| - |U \cap V|) - 2}{2} = \frac{n - 2}{2}$.
 \end{proof}
 
\begin{prop}\label{2splitNP} 
Assume Hypotheses~$\ref{hypNP}$,~$\ref{hypNP2}$ and~$\ref{hypNP3}$. If $\{c,d\}$ is not perfect, then $r\leq n/2$ and $n\ge 6$.
\end{prop}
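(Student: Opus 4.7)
The assertion $n \ge 6$ is part of Lemma~\ref{g2}. For $r \le n/2$ I argue by contradiction and assume $r > n/2$; by Lemma~\ref{g2} this forces $g > h$. The plan is to apply Proposition~\ref{uv} with $u = h$ and $v = g$, which, once its five hypotheses are verified, yields $r \le (n-2)/2 < n/2$ and produces the desired contradiction.

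Four of the five hypotheses of Proposition~\ref{uv} are almost immediate. For (1) we have $h > 0$ since $h > i \ge 0$ by Hypothesis~\ref{hypNP2}, $h < g$ was just established, and $g < r-1$ because $g < j$ by the minimality in Hypothesis~\ref{hypNP3} while $j \le r-1$, so $g \le j-1 \le r-2$. Hypothesis (2), $h \le (n-|X|-1)/2$ with $X = \{1,\ldots,n\}\setminus \mathrm{Fix}(G_{>h})$, is exactly Proposition~\ref{h} at $\ell = h$. Hypothesis (3), $r - g - 1 \le (n-|Y|-1)/2$ with $Y = \{1,\ldots,n\}\setminus\mathrm{Fix}(G_{<g})$, is the $\ell = g$ instance of Lemma~\ref{g1}\eqref{eqc}. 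Hypothesis (4), that $\Gamma_{\{h+1,\ldots,g-1\}}$ admits a $2$-fracture graph, was recorded in the paragraph following Lemma~\ref{g2}, resting on the fact that $i$ and $j$ are labels of consecutive splits.

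The main obstacle is hypothesis (5): intransitivity of $G_{\{h+1,\ldots,g-1\}}$ on $X \cap Y$. The starting observation is that $G_{\{h+1,\ldots,g-1\}}$ contains neither $\rho_i$ nor $\rho_j$, so it lies in $G_i \cap G_j$ and preserves both partitions $\{O_a, O_b\}$ and $\{O_c, O_d\}$. Combining the support description $J_A = \{0,\ldots,h\}\setminus\{i\}$ from Hypothesis~\ref{hypNP2} with $J_D = \{g,\ldots,r-1\}\setminus\{j\}$ from Lemma~\ref{g1}\eqref{eqa}, we obtain $O_a \subseteq \mathrm{Fix}(G_{>h})$ and $O_d \subseteq \mathrm{Fix}(G_{<g})$, whence $O_a \cap X = \emptyset = O_d \cap Y$ and so $X \cap Y \subseteq O_b \cap O_c$. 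It then remains to exhibit two distinct $G_{\{h+1,\ldots,g-1\}}$-orbits inside $X \cap Y$; this is the bulk of the work. I would carry it out by exploiting the cycles of $\rho_h$ and $\rho_g$ that lie inside $O_b$ and $O_c$ respectively (which exist because $h \in J_B$ and $g \in J_C$), together with the commuting relations between $\rho_h$ (resp.\ $\rho_g$) and each generator of $G_{\{h+1,\ldots,g-1\}}$, to force the coexistence of fixed points and moved points of $G_{\{h+1,\ldots,g-1\}}$ within $X \cap Y$. Once (5) is in place, Proposition~\ref{uv} delivers $r \le (n-2)/2$, the sought contradiction.
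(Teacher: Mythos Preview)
Your reduction to Proposition~\ref{uv} matches the paper's opening move, and hypotheses (1)--(4) are checked correctly. The problem is hypothesis~(5). You assert that intransitivity of $G_{\{h+1,\ldots,g-1\}}$ on $X\cap Y$ can be established directly, but your sketch rests on ``the commuting relations between $\rho_h$ (resp.\ $\rho_g$) and each generator of $G_{\{h+1,\ldots,g-1\}}$'', and this is false: $\rho_h$ does \emph{not} commute with $\rho_{h+1}$, and $\rho_g$ does not commute with $\rho_{g-1}$. So the mechanism you propose for producing a fixed point of $G_{\{h+1,\ldots,g-1\}}$ inside $X\cap Y$ does not work as stated.

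More seriously, the paper treats the transitive case as genuinely possible and disposes of it by a separate argument. When $G_{\{h+1,\ldots,g-1\}}$ is transitive on $X\cap Y$, Proposition~\ref{frac2} (using that the permutation representation graph must have pendent edges labelled $h+1$ and $g-1$, which forces graph~(4) of Table~\ref{T2F}) gives $g-h-1\le (|X\cap Y|-1)/2$. Combining this with Proposition~\ref{h} and Lemma~\ref{g1}\eqref{eqc}, together with $n=|X|+|Y|-|X\cap Y|$, yields $r\le (n+1)/2$, hence $r=(n+1)/2$ with equality throughout. That in turn pins down the permutation representation graph of $\Gamma$ as a gluing of graphs from Tables~\ref{Th}, \ref{T2F}~(4), and~\ref{Tgg}, and one then checks that no such gluing makes both $\rho_i,\rho_{i+1}$ and $\rho_j,\rho_{j+1}$ simultaneously even. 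This parity endgame is the missing piece in your argument; without it, or without an actual proof of~(5), the contradiction is not reached.
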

\begin{proof}
The existence of two non-perfect splits implies $n\ge 6$.

We proceed by contradiction and assume that $r > n/2$.  
Recall from Hypothesis~\ref{hypNP3} that $\rho_g$ is the permutation with the minimal label acting non-trivially on both $G_j$-orbits. By Lemmas~\ref{g1} and~\ref{g2}, we have
\[
r - g-1 \leq \frac{n - |Y| - 1}{2},
\]
where $Y = \{1, \ldots, n\} \setminus \mathrm{Fix}(G_{<g})$,  $Y \subseteq O_c$ and $h < g$. 

If $G_{\{h+1, \ldots, g-1\}}$ acts intransitively on $X \cap Y$, then by Proposition~\ref{uv}, we obtain $r \leq (n-2)/2$, contradicting the assumption $r>n/2$. Therefore, $G_{\{h+1, \ldots, g-1\}}$ must act transitively on $X \cap Y$.

Since $i$ and $j$ are consecutive splits, the group $\Gamma_{\{h+1, \ldots, g-1\}}$ has a fracture graph. Then, by Proposition~\ref{frac2}, either $g-h-1< (|X\cap Y|-1)/2$ or the permutation representation graph of $\Gamma_{\{h+1, \ldots, g-1\}}$ corresponds to one of the graphs in Table~\ref{T2F}. This graph must contain two pendent edges labeled with the minimal and maximal values, namely $h+1$ and $g-1$, respectively. The only graph in Table~\ref{T2F} satisfying this condition is graph~(4). Hence, 
\[
g - h - 1 \le \frac{|X \cap Y| - 1}{2}.
\]

Furthermore, since $h \leq (n - |X| - 1)/2$ (from Proposition~\ref{h}) and $n = |X| + |Y| - |X \cap Y|$, we deduce that $
r \leq (n + 1)/2.$ Hence, $r=(n+1)/2$. This implies $h=(n-|X|-1)/2$, $r-g-1=(n-|Y|-1)/2$ and $g-h-1=(|X\cap Y|-1)/2$.

However, the permutation representation graph of $\Gamma$ can be constructed as follows:
\begin{itemize}
\item connecting a pendent $h$-edge from the graph in Table~\ref{Th} with the pendent $(h+1)$-edge of graph (4) in Table~\ref{T2F}, whose labels are ${h+1, \ldots, g-1}$ and  
\item connecting the pendent $(g-1)$-edge of graph (4)  to a pendent edge from one of the graphs in Table~\ref{Tgg}.
\end{itemize}
This construction leads to a contradiction with the evenness of $G$. Indeed, only graphs (B) and (C) permit the addition of $i$-edges distinct from ${a,b}$, and only graphs ($B'$) and ($C'$) permit the addition of $j$-edges distinct from ${c,d}$. Nevertheless, it is impossible for both $\rho_i$ and $\rho_{i+1}$ to be even, as well as for both $\rho_j$ and $\rho_{j+1}$ to be even, a contradiction.
\end{proof}

\begin{coro}\label{p=0}
Assume Hypotheses~$\ref{hypNP}$,~$\ref{hypNP2}$ and~$\ref{hypNP3}$. If none of the splits is perfect, then  either $r\leq (n-1)/2$, or $r\le n/2$ and $n\ge 6$. In both cases, $r\leq \lfloor 3(n-1)/5\rfloor$.
\end{coro}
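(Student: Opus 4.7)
The plan is to combine Propositions~\ref{1splitNP} and~\ref{2splitNP} via a dichotomy on whether $\Gamma_{>h}$ admits a $2$-fracture graph, and then reduce the resulting bounds to $\lfloor 3(n-1)/5\rfloor$ by a routine arithmetic comparison. The hypotheses of this section guarantee that $\Gamma$ has at least one split, and since by assumption no split is perfect, the $i$-split $\{a,b\}$ from Hypothesis~\ref{hypNP} is itself non-perfect. By duality if necessary, I may take $i$ to be the maximal label of a non-perfect split for which some $\rho_x$ with $x>i$ acts non-trivially on both $G_i$-orbits, which realises Hypothesis~\ref{hypNP3}, and then $h$ and $X=\{1,\ldots,n\}\setminus\mathrm{Fix}(G_{>h})$ are determined as in Proposition~\ref{h}.

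In the first case, I suppose $\Gamma_{>h}$ admits a $2$-fracture graph. Then Proposition~\ref{1splitNP} applies directly and yields $r\leq(n-1)/2$. In the second case, $\Gamma_{>h}$ does not admit a $2$-fracture graph, so by the discussion in Section~\ref{sec:Gammahnot2fracture} it has at least one split. Taking the split $\{c,d\}$ with minimal label $j>h$ makes $i$ and $j$ labels of consecutive splits, and defining $g$ as the minimal label of a permutation acting non-trivially on both $G_j$-orbits completes the data in Hypothesis~\ref{hypNP3}. The assumption that no split is perfect forces $\{c,d\}$ to be non-perfect, and therefore Proposition~\ref{2splitNP} applies, yielding $r\leq n/2$ together with $n\geq 6$.

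To finish, I check the arithmetic. The inequality $(n-1)/2 \leq 3(n-1)/5$ holds for every $n\geq 1$, so Case~1 immediately gives $r\leq\lfloor 3(n-1)/5\rfloor$. The inequality $n/2 \leq 3(n-1)/5$ is equivalent to $n\geq 6$, so Case~2 also yields $r\leq\lfloor n/2\rfloor \leq \lfloor 3(n-1)/5\rfloor$. Essentially all of the technical content is already packaged inside Propositions~\ref{1splitNP} and~\ref{2splitNP}, so the hardest part is really just the bookkeeping: verifying that the $\{c,d\}$ produced in Case~2 genuinely satisfies Hypothesis~\ref{hypNP3}, namely that the maximality clause for $i$ survives the (possible) duality flip, and that $g$ from Hypothesis~\ref{hypNP3} is well defined and lies strictly below $j$. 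The former is secured by our initial choice of $i$, while the latter follows from that same maximality applied to the split at $j>i$.
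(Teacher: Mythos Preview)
Your argument is correct and is exactly the reconstruction the paper expects: the corollary is stated without proof, and the intended justification is precisely the dichotomy you give, namely Proposition~\ref{1splitNP} when $\Gamma_{>h}$ has a $2$-fracture graph and Proposition~\ref{2splitNP} otherwise, followed by the elementary comparison of $(n-1)/2$ and $n/2$ with $\lfloor 3(n-1)/5\rfloor$. Your bookkeeping remarks about realising Hypothesis~\ref{hypNP3} and choosing $j$ minimal are appropriate glosses on assumptions the paper makes tacitly in Section~\ref{sec:Gammahnot2fracture}.
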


\subsection{The split $\{c,d\}$ is  perfect} 
We proceed by induction on the number $p$ of perfect splits of $\Gamma$. The base case (that is, $p=0$) is given by Corollary~\ref{p=0}.

\begin{coro}\label{splitNPP} 
Assume Hypotheses~$\ref{hypNP}$,~$\ref{hypNP2}$ and~$\ref{hypNP3}$. If  at least one of the splits is perfect, then $r\leq\lfloor\frac{3n-3}{5}\rfloor$.
\end{coro}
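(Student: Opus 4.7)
The plan is to proceed by induction on $p$, the number of perfect splits of $\Gamma$. The base case $p=0$ is exactly Corollary~\ref{p=0}. For the inductive step $p\geq 1$, the hypothesis of this subsection provides a perfect $j$-split $\{c,d\}$ of $\Gamma$ (with $j>h>i$), which I use to decompose $\Gamma$ as
\[
\Gamma_1 = \Gamma_{<j}\ \text{acting on } O_c, \qquad \Gamma_2 = \Gamma_{>j}\ \text{acting on } O_d,
\]
with $r = r_1 + r_2 + 1$ and $n = n_C + n_D$. Both $\Gamma_1$ and $\Gamma_2$ are SGGIs for transitive subgroups of the corresponding alternating groups: the perfect-split property ensures $\Gamma_{<j}$ fixes $O_d$ pointwise and $\Gamma_{>j}$ fixes $O_c$ pointwise, so both induced actions are faithful.

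Because $i<h<j$, the non-perfect $i$-split $\{a,b\}$ of $\Gamma$ lies inside $\Gamma_1$. Hence $\Gamma_1$ still satisfies Hypothesis~\ref{hypNP} and has strictly fewer than $p$ perfect splits, so by the inductive hypothesis $r_1 \leq \lfloor 3(n_C-1)/5\rfloor$. For $\Gamma_2$, I distinguish three cases according to its structure: if all its splits are perfect, apply Proposition~\ref{casePS}; if it has a non-perfect split, apply the inductive hypothesis on $p$; if it admits a 2-fracture graph, apply Proposition~\ref{frac2}, sharpened via the structural constraint at $d$ imposed by the split (which rules out the two extremal configurations of Table~\ref{T2F} whose degree-two-row structure would be incompatible with the split) so that $r_2 \leq (n_D-1)/2$. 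In every case one concludes $r_2 \leq \lfloor 3(n_D-1)/5\rfloor$.

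Combining yields $r \leq \lfloor 3(n_C-1)/5\rfloor + \lfloor 3(n_D-1)/5\rfloor + 1$, which is generically at most $\lfloor 3(n-1)/5\rfloor$. The borderline residue combinations of $n_C, n_D \pmod 5$, together with the small-degree boundary cases ($n_D\in\{1,2\}$, which force $j=r-1$ and $r_2=0$), must be treated separately: the parity constraint $\rho_j\in\Alt(n)$ forces $\rho_j$ to contain at least one additional transposition (besides $(c,d)$) within $O_c\sqcup O_d$, and this imposes structural rigidity near $c$ and $d$ that excludes the offending extremal configurations. I expect the main obstacle to be precisely this residue bookkeeping, together with verifying that the stabilizers $(\Gamma_k)_\ell$ remain intransitive after restriction and that perfect splits of $\Gamma$ at smaller labels translate to perfect splits of $\Gamma_1$, so that the count ``at most $p-1$ perfect splits'' for $\Gamma_1$ can be invoked rigorously.
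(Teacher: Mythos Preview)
Your induction on $p$ and the decomposition at the perfect $j$-split are the right skeleton, and your case analysis for $\Gamma_2=\Gamma_{>j}$ is essentially what the paper does. The crucial divergence is on the other side: you bound $\Gamma_1=\Gamma_{<j}$ by the inductive estimate $r_1\le\lfloor 3(n_C-1)/5\rfloor$, whereas the paper obtains the much sharper $j\le (n_C-1)/2$ by applying Proposition~\ref{1splitNP} directly to $\Gamma_{<j}$ (acting on $O_c$). This is available because the hypotheses of Section~\ref{sec:5} persist for $\Gamma_{<j}$: the non-perfect $i$-split with its parameter $h$ is still there, and since $i$ and $j$ are \emph{consecutive} split labels, $(\Gamma_{<j})_{>h}=\Gamma_{\{h+1,\ldots,j-1\}}$ has a $2$-fracture graph --- exactly the hypothesis of Proposition~\ref{1splitNP}. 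With this asymmetric pair of bounds one gets
\[
r\le \frac{n_C-1}{2}+1+\frac{3(n_D-1)}{5}=\frac{5n_C+6n_D-1}{10},
\]
and this is $\le (3n-3)/5$ precisely when $n_C\ge 5$, a single clean inequality that holds because $\rho_i\in\Alt(n)$ forces $n_C\ge 5$.

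The residue problem you flag is therefore not a minor bookkeeping nuisance but the symptom of using too weak a bound on the $O_c$ side. In the bad residues (e.g.\ $n_C\equiv n_D\equiv 1\pmod 5$) the combined inequality $\lfloor 3(n_C-1)/5\rfloor+\lfloor 3(n_D-1)/5\rfloor+1\le\lfloor 3(n-1)/5\rfloor$ genuinely fails by one, and your proposed remedy --- using the extra $\rho_j$-transposition to impose ``structural rigidity near $c$ and $d$'' --- does not obviously force either side strictly below its bound; the extremal examples in Table~\ref{table:examples} for $n\equiv 1\pmod 5$ already accommodate pendent vertices at both ends, so gluing is not immediately obstructed. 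You would need a separate argument showing that the non-perfect $i$-split inside $\Gamma_1$ prevents $\Gamma_1$ from reaching $\lfloor 3(n_C-1)/5\rfloor$ --- which is, in effect, exactly the content of Proposition~\ref{1splitNP}. So the missing idea is to exploit that $i,j$ are consecutive splits and invoke Proposition~\ref{1splitNP} on $\Gamma_{<j}$ rather than the inductive hypothesis.
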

\begin{proof}
From Proposition~\ref{2splitNP}, we may assume that $\{c,d\}$ is a perfect split. Thus, the number of perfect splits of $\Gamma_{>j}$ is smaller than $p$. Hence, by induction,
\[
r - j - 1 \leq \frac{3n_D - 3}{5}.
\]

By construction, $\Gamma_{\{h+1,\ldots,j-1\}}$ has a 2-fracture graph, which must have two pendent edges with labels $h+1$ and $j-1$. Hence, by Proposition~\ref{1splitNP},
\[
j \leq \frac{n_C - 1}{2}.
\]
Therefore,
\[
r \leq \frac{6n_D + 5n_C - 1}{10}.
\]
Now, note that $r \leq \frac{3n - 3}{5}$ whenever $n \geq n_D + 5$, which always holds. Indeed, if $n_C \le 4$, then $\rho_i$ must be an odd permutation, a contradiction.
\end{proof}


\section{The group $G_i$ is transitive for some $i\in\{0,\ldots,r-1\}$}

In the previous sections, we proved that if $\Gamma$ has a fracture graph, then 
$
r \leq \frac{3(n-1)}{5}.
$
We now turn to the remaining case, where $\Gamma$ does not admit a fracture graph. This implies that there exists an index $i \in \{0, \ldots, r-1\}$ such that the group $G_i$ is transitive.

We first deal with the case that $G_i$ is primitive. Here, we actually prove something slightly stronger.
\begin{prop}\label{prop:primitive}
Let $G$ be a finite primitive group of degree $n$ and let $r$ be the maximum rank of an SGGI for $G$. Then either  $r< \lfloor 3(n-1)/5\rfloor-1$ or one of the following holds:
\begin{enumerate}
\item\label{eq:0} $G$ contains the alternating group $\mathrm{Alt}(n)$,
\item\label{eq:5} $n=5$, $G=D_{5}$ and $r=2$,
\item\label{eq:6} $n=6$, $G=\mathrm{PSL}_2(5)$ and $r=3$, or $G=\mathrm{PGL}_2(5)$ and $r=4$,
\item\label{eq:7} $n=7$, $G=D_7$ and $r=2$,
\item\label{eq:8} $n=8$, $G=\mathrm{PGL}_2(7)$ and $r=3$,
\item\label{eq:9} $n=9$, $G=\mathrm{Sym}(3)\mathrm{wr}\mathrm{Sym}(2)$ or $G=\mathrm{PSL}_2(8)$, and $r=3$,
\item\label{eq:10} $n=10$, $G=\mathrm{Sym}(5)$ and $r=4$, or 
$G=\mathrm{P}\Sigma\mathrm{L}_2(9)$ and $r=5$.
\end{enumerate}
\end{prop}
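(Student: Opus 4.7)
The plan is to combine a logarithmic bound on the rank of an SGGI with Mar\'oti's theorem on the orders of primitive groups, reducing the problem to a finite check for small degrees and the Mathieu groups.

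I would begin by establishing the elementary chain bound: if $(G,S)$ is an SGGI of rank $r$, then $S$ is independent, so ordering $S$ as $s_1,\ldots,s_r$ and setting $G_k=\langle s_1,\ldots,s_k\rangle$ gives a strictly increasing chain $1<G_1<\cdots<G_r=G$. Since each $G_{k-1}$ is a proper subgroup of $G_k$, its index is at least $2$, so $|G|\ge 2^r$ and therefore $r\le \log_2|G|$. This uses nothing beyond the definition of an independent generating set and is the sole group-theoretic input we need.

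Next, by Mar\'oti's theorem, for a primitive group $G$ of degree $n$ not containing $\mathrm{Alt}(n)$ and distinct from the Mathieu groups $M_{11},M_{12},M_{23},M_{24}$, one has $|G|\le \prod_{i=0}^{\lfloor\log_2 n\rfloor-1}(n-2^i)\le n^{1+\log_2 n}$. Combining this with the chain bound yields $r\le (\log_2 n)(1+\log_2 n)$, and an elementary estimate shows that the right-hand side is strictly less than $\lfloor 3(n-1)/5\rfloor-1$ once $n$ exceeds an explicit threshold. For $n$ below that threshold, and for each of the Mathieu groups, I would invoke a direct verification using magma, enumerating primitive groups of the relevant degree (from the library of primitive groups of small degree) and computing, for each, the maximum rank of an SGGI. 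This both confirms the strict bound for $n\ge 11$ outside the alternating/symmetric case and extracts the small exceptional list \eqref{eq:5}--\eqref{eq:10} for $5\le n\le 10$.

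The main obstacle is the intermediate degree range where Mar\'oti's bound, coupled with $r\le \log_2|G|$, is not sharp enough to directly beat $\lfloor 3(n-1)/5\rfloor -1$; here a computer enumeration (or a case analysis by O'Nan--Scott type) is required. The Mathieu groups in particular sit in this range and carry SGGIs whose maximum ranks must be read off from known tables of string (C-)group representations of sporadic simple groups, or recomputed directly. Once the bound is verified in every remaining primitive degree, the proposition follows by collecting the exceptions listed in \eqref{eq:0}--\eqref{eq:10}.
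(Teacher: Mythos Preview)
Your overall strategy---the chain bound $r\le \log_2|G|$, Mar\'oti's theorem, and a computer sweep for small degrees and the Mathieu groups---is exactly the paper's approach. However, your statement of Mar\'oti's theorem is incorrect, and this creates a genuine gap. Mar\'oti's theorem does \emph{not} assert that every primitive group outside $\mathrm{Alt}(n)$ and the four Mathieu groups satisfies $|G|\le \prod_{i=0}^{\lfloor\log_2 n\rfloor-1}(n-2^i)$. There is a third alternative: $G$ lies between $(\mathrm{Alt}(m))^t$ and $\mathrm{Sym}(m)\,\mathrm{wr}\,\mathrm{Sym}(t)$ in the product action on $k$-subsets, with $n=\binom{m}{k}^t$. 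These groups can be far larger than $n^{1+\log_2 n}$; for instance, with $t=1$, $k=2$ one has $n=\binom{m}{2}$ and $|G|$ of order $m!$, so $\log_2|G|\sim m\log_2 m$ while $(1+\log_2 n)\log_2 n\sim 4(\log_2 m)^2$, and the former dominates for large $m$.

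The paper handles this product-action family by a separate argument: bounding $r\le \log_2|G|\le (m\log_2 m+\log_2 t)t$ and comparing with $\lfloor 3(\binom{m}{k}^t-1)/5\rfloor-1$ to reduce to finitely many pairs $(m,t)$, then finishing those with Whiston's bound $r\le m-1$ and a short computation. Your proposal needs an analogous step; without it, the inequality $r\le (\log_2 n)(1+\log_2 n)$ is simply unavailable for an infinite family of primitive groups, and no finite computer check will close the gap.
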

\begin{proof}
Our proof is based on the following result of Mar\'oti, see~\cite{2002maroti}. Let $G$ be a primitive permutation group of degree $n$. Then one of
the following holds:
\begin{enumerate}
\item\label{eq:i}$G$ is a subgroup of $\mathrm{Sym}(m)\mathrm{wr}\mathrm{Sym}(t)$ containing the socle $(\mathrm{Alt}(m) )^t$, where the action of $\mathrm{Sym}(m)$ is
on $k$-element subsets of $\{1, \ldots , m\}$ and the wreath product has the product
action of degree $n = {m\choose k}^t$;
\item\label{eq:ii}$G = M_{11}$ , $M_{12}$, $M_{23}$, or $M_{24}$ with their $4$-transitive action;
\item\label{eq:iii}$|G|\le n\cdot \prod_{i=0}^{\lfloor\log_2(n)\rfloor-1}(n-2^i)$.
\end{enumerate}

We deal with each of these cases in turn. Assume that part~\eqref{eq:ii} holds. We have verified, with the auxiliary help of a computer, that $M_{11}$ and $M_{23}$ do not admit SGGIs, that the maximal rank of an SGGI for $M_{12}$ is $4<5=\lfloor 3(12-1)/5\rfloor-1$,  and that the maximal rank of an SGGI for $M_{24}$ is $5<12=\lfloor 3(24-1)/5 \rfloor-1$.\footnote{The Mathieu group $M_{24}$ admits only one SGGI of rank 5 up to duality, whereas $M_{12}$ contains many SGGIs of rank 4. These can be constructed using the \texttt{Magma} functions \texttt{Sggi4} and \texttt{Sggi5}, which are available in our companion paper on arXiv~\cite{arxiv}.}

Assume that part~\eqref{eq:iii} holds. Since $G$ admits an independent generating set of cardinality $r$, we deduce that
$$r\le \log_2|G|\le \log_2n+\sum_{i=0}^{\lfloor \log_2n\rfloor-1}\log_2(n-2^{i}) .$$
An elementary computation shows that the right hand side of this inequality is less than $\lfloor 3(n-1)/5\rfloor-1$, except when $n\le 72$. The rest of the proof is computational: for $n\le 72$, we have selected all the primitive groups $G$ of degree $n$ not containing $\mathrm{Alt}(n)$ and with $\log_2(|G|)\ge\lfloor 3(n-1)/5\rfloor-1$, and we have computed the maximal rank of an SGGI for $G$. The only examples where the rank is at least $\lfloor 3(n-1)/5\rfloor-1$ are reported in the statement of the result.

Finally assume that part~\eqref{eq:i} holds. As above, 
\begin{align*}
r&\le \log_2|G|\le\log_2(m!^tt!)= \log_2(m!)t+\log_2(t!)\\
&\le \log_2(m)mt+\log_2(t)t=(\log_2(m)m+\log_2t)t.
\end{align*}
If $r\ge \lfloor 3(n-1)/5\rfloor-1$, then
$$(\log_2(m)m+\log_2t)t\ge \left\lfloor\frac{3({m\choose k}^t-1)}{5}\right\rfloor-1\ge \left\lfloor\frac{3(m^t-1)}{5}\right\rfloor-1.$$
A computation shows that either $t=1$, or $t=2$ with $m\leq 12$. Assume first that $t=2$ and $m\leq 12$. By implementing the exact value of $n={m\choose k}^2$ and refining the upper bound on $|G|$ using $|\mathrm{Sym}(m)\,\mathrm{wr}\,\mathrm{Sym}(2)|=m!^2 2$, we deduce that $r\ge \lfloor 3(n-1)/5\rfloor-1$ only when $k=1$ and $m\in \{5,6\}$. These two cases can be resolved computationally to determine the exact value of $r$, and no exceptions arise in this case.  
Now, assume $t=1$. In this case, we have $\mathrm{Alt}(m)\le G\le \mathrm{Sym}(m)$ and $n={m\choose k}$. Moreover, from~\cite{2000W},  we have $r\leq m-1$. The case $t=k=1$ is the main exception listed in~\eqref{eq:0}; therefore, we may suppose $k\ge 2$. Recall that the binomial coefficient ${m\choose k}$ increases with $k$ when $1\leq k\leq m/2$. The inequality  
\[
m-1\ge \left\lfloor\frac{3({m\choose 2}-1)}{5}\right\rfloor-1
\]  
holds only when $m= 5$. When $G=\mathrm{Sym}(5)$, we obtain one of the exceptions listed in~\eqref{eq:10}. When $G=\mathrm{Alt}(5)$, the rank is $3<4=\lfloor 3(10-1)/5\rfloor-1$. 
\end{proof}

\begin{prop}\label{prop:Giprimitive}
Let $G=\mathrm{Alt}(n)$ be the alternating group of degree $n$ and let $\Gamma=(G,S)$ be an SGGI of rank $r$.
If $G_i$ is primitive for some $i\in\{0,\ldots,r-1\}$, then either $r\leq \lfloor 3(n-1)/5\rfloor$, or $n=5$ and $r=3$.
\end{prop}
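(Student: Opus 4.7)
The plan is to apply Proposition~\ref{prop:primitive} to $G_i$. First observe that, because $S$ is an independent generating set, for each $j\ne i$ we have $\rho_j\notin\langle\{\rho_k:k\ne i,\, k\ne j\}\rangle$, so $\{\rho_j:j\ne i\}$ is an independent generating set for $G_i$. Since the commuting property is inherited, $(G_i,\{\rho_j:j\ne i\})$ is itself an SGGI of rank $r-1$. Independence also gives $\rho_i\notin G_i$, so $G_i$ is a \emph{proper} primitive subgroup of $G=\Alt(n)$; in particular, $G_i$ does not contain $\Alt(n)$.

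By Proposition~\ref{prop:primitive} applied to $G_i$, either the maximum rank of an SGGI for $G_i$ is strictly less than $\lfloor 3(n-1)/5\rfloor-1$, in which case $r-1\le\lfloor 3(n-1)/5\rfloor-2$ and we are done, or $G_i$ appears in one of the exceptional families listed in items~\eqref{eq:5}--\eqref{eq:10} (case~\eqref{eq:0} being ruled out by the previous paragraph).

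It then remains to inspect each exceptional case for compatibility with the hypothesis $G_i\le\Alt(n)$. The groups $\PGL_2(5)$ on $6$ points, $D_7$ on $7$ points, $\PGL_2(7)$ on $8$ points, $\Sym(3)\,\mathrm{wr}\,\Sym(2)$ on $9$ points, $\Sym(5)$ on $10$ points, and $\mathrm{P}\Sigma\mathrm{L}_2(9)$ on $10$ points all contain odd permutations and are hence not contained in the corresponding alternating group; for instance, a reflection in $D_7$ is a product of three transpositions on $7$ points, and $\PGL_2(5)$ would give a subgroup of index $3$ in the simple group $\Alt(6)$, which is impossible. The only exceptions compatible with $G_i\le\Alt(n)$ are therefore $n=5$ with $G_i=D_5$, giving $r\le 3$ (precisely the stated exception); $n=9$ with $G_i=\PSL_2(8)$, giving $r\le 4=\lfloor 3(9-1)/5\rfloor$ (within the bound); and $n=6$ with $G_i=\PSL_2(5)$, giving only $r\le 4$.

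The main obstacle is the case $n=6$, where the bound from Proposition~\ref{prop:primitive} falls short of the required $\lfloor 3(6-1)/5\rfloor=3$. I plan to dispose of it by invoking the independent (computational) fact recorded in Theorem~\ref{main} that $\Alt(6)$ admits no SGGI at all: this makes the hypothesis of the proposition vacuous when $n=6$, and the conclusion holds trivially. The rest of the proof is then essentially bookkeeping, combining the dichotomy above with parity and simplicity arguments to discard the remaining exceptional primitive subgroups.
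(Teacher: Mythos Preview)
Your argument follows the same route as the paper: apply Proposition~\ref{prop:primitive} to the proper primitive subgroup $G_i$ and then dispose of the finitely many exceptional items. The paper simply says ``we have verified the correctness of the result using a computer'' for all the exceptions, whereas you eliminate most of them by hand via parity/index considerations, which is a pleasant refinement; the core strategy is identical.

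One presentational wrinkle: to handle $n=6$ you appeal to Theorem~\ref{main}, but in the paper's logical architecture Theorem~\ref{main} is proved \emph{using} Proposition~\ref{prop:Giprimitive} (see Section~\ref{sec:proof}), so quoting it here is formally circular. The fact you actually need, that $\Alt(6)$ admits no SGGI, is established independently by the direct computer search for $n\le 9$ mentioned at the start of Section~\ref{sec:proof}; citing that computation directly (rather than the packaged theorem) removes the circularity and matches what the paper itself does for the exceptional cases.
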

\begin{proof}
Suppose $r> \lfloor 3(n-1)/5\rfloor$. As $G_i$ is a proper subgroup of $G=\mathrm{Alt}(n)$, by Proposition~\ref{prop:primitive}, we deduce that $G_i$ is one of the groups listed in parts~\eqref{eq:5}--\eqref{eq:10}.  Now, we have verified the correctness of the result using a computer. 
\end{proof}

Using Proposition~\ref{prop:Giprimitive}, we are now ready to deal with the case that $G_i$ is imprimitive.
\begin{prop}\label{prop:Giimprimitive}
Let $G=\mathrm{Alt}(n)$ be the alternating group of degree $n$ and let $\Gamma=(G,S)$ be an SGGI of rank $r$.
If $G_i$ is  transitive and imprimitive for some $i\in\{0,\ldots,r-1\}$, then $r\leq \lfloor 3(n-1)/5\rfloor$.
\end{prop}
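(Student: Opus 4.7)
The plan is to exploit the imprimitive action of $G_i$ by factoring through a block system. Fix a minimal $G_i$-invariant partition $\mathcal{B}$ of $\{1,\ldots,n\}$ into blocks of size $k\ge 2$ with $m:=n/k\ge 2$ blocks. Let $K$ denote the kernel of the action of $G_i$ on $\mathcal{B}$ and write $\bar{G}_i:=G_i/K\le\Sym(\mathcal{B})$; by minimality of $\mathcal{B}$ the group $\bar{G}_i$ is primitive of degree $m$. For each $j\in\{0,\ldots,r-1\}\setminus\{i\}$, let $\bar{\rho}_j\in\bar{G}_i$ be the image of $\rho_j$.

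First I would build a \emph{block SGGI} $\bar{\Gamma}$ by taking the subsequence of those $\bar{\rho}_j$ that are non-trivial, with the inherited order. The commuting property is preserved because indices at distance at least $2$ in the sub-ordering are also at distance at least $2$ in the original ordering, and distance-$2$ indices of $\Gamma$ commute. The SGGI $\bar{\Gamma}$ has rank $\bar{r}\le r-1$ and acts as a primitive subgroup of $\Sym(m)$, so Proposition~\ref{prop:primitive} applies: either $\bar{r}<\lfloor 3(m-1)/5\rfloor-1$, or $\bar{G}_i$ is one of the small exceptions listed there, or $\bar{G}_i$ contains $\Alt(m)$ (in which case Whiteley's bound~\cite{2000W} gives $\bar{r}\le m-1$).

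Next I would bound the ``kernel contribution''. The generators $\rho_j$ whose image in $\bar{G}_i$ is trivial lie in $K\le\Sym(k)^m$ and, restricted to a single block, form an SGGI of degree $k$. An induction on the degree, using the already established form of Theorem~\ref{main} for degrees less than $n$ (and the bound $r\le n-1$ of~\cite{2000W} when the restriction contains an alternating factor), gives an upper bound on the number of such generators in terms of $k$. Combining the block bound $\bar{r}$ with this kernel bound via the embedding $G_i\hookrightarrow\Sym(k)\wr\Sym(m)$, and checking each factorisation $n=km$ together with each exception from Proposition~\ref{prop:primitive}, should produce the desired $r\le\lfloor 3(n-1)/5\rfloor$.

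The main obstacle will be that the additive estimate $r-1\le r_{\mathrm{block}}+r_{\mathrm{kernel}}$ is too weak when $\bar{G}_i$ contains $\Alt(m)$, since the bound $r_{\mathrm{block}}\le m-1$ alone can be as large as $n/2-1$. To close the gap one has to exploit that $G=\Alt(n)$ is primitive on $n$ points while $G_i$ preserves the non-trivial partition $\mathcal{B}$, so that $\rho_i$ necessarily mixes blocks in a way incompatible with any direct-product decomposition of $\Gamma_i$; this rigidity, together with the fact that every generator is an even permutation, should allow one to cap either $r_{\mathrm{block}}$ or $r_{\mathrm{kernel}}$ strictly below the naive bound. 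A secondary difficulty is a short list of small or degenerate cases (typically $k=2$, $m=2$, or $n$ small) where the target bound is tight and which may have to be verified using \texttt{magma}.
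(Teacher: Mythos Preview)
Your high-level picture (factor through the block action, bound the block part and the kernel part separately) is the right one, and it matches the paper's strategy. However, the implementation has two concrete gaps, and the fix you propose for the ``main obstacle'' is not the mechanism that actually works.

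First, the object $\bar{\Gamma}$ you build need not be an SGGI in the sense required for Proposition~\ref{prop:primitive}. The non-trivial images $\bar{\rho}_j$ are involutions satisfying the commuting property, but they need not be \emph{independent} (indeed they need not even be pairwise distinct: several $\rho_j$ at mutual label-distance $\ge 2$ can have the same image in $\bar{G}_i$). So there is no ``rank $\bar{r}$'' to which Proposition~\ref{prop:primitive} applies, and in particular you cannot bound the number of generators with non-trivial block image this way. Likewise, the generators in the kernel, restricted to a single block, need not be independent and the group they generate on a block need not be an alternating group, so the inductive appeal to Theorem~\ref{main} is not justified; the correct bound here is Whiston's $k-1$ for arbitrary subgroups of $\mathrm{Sym}(k)$.

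Second, the paper does not close the gap by using primitivity of $G$ or evenness of $\rho_i$ as you suggest. Following Whiston, it chooses a subset $L\subseteq S\setminus\{\rho_i\}$ whose images form an \emph{independent} generating set of $J=\bar{G}_i$, and then partitions the remaining generators into $C$ (those commuting with all of $L$) and $R$ (the rest). The crucial structural input is the string property: once one knows $J$ is not a product of two commuting transitive normal subgroups, the labels of $L$ must form a single interval, and hence $|R|\le 2$ (only the two labels adjacent to that interval can fail to commute with $L$). Because $\langle L\rangle$ surjects onto the transitive group $J$, the centralizer of $\langle L\rangle$ in $K\wr J$ lies in a diagonal copy of $K\le\mathrm{Sym}(k)$, so $|C|\le k-1$ by~\cite{2000W}. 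Together with $|L|\le m-1$ this gives $r\le m+k+1$, and then the residual cases ($k=2$ or $m=2$, and a handful of small $n$) are finished by hand and by computer, as you anticipated. The piece you are missing is precisely the $L,C,R$ decomposition and the bound $|R|\le 2$ coming from the interval structure of $L$.
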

\begin{proof}
Since $G_i$ is imprimitive, it admits a non-trivial system of imprimitivity. We choose a non-trivial system of imprimitivity such that the action of $G_i$ on the blocks is primitive. Let $m$ be the number of blocks and let $k$ be the cardinality of a block. In particular, $G_i$ is a transitive subgroup of the wreath product $\mathrm{Sym}(k)\mathrm{wr}\mathrm{Sym}(m)$ endowed of its imprimitive action of degree $km=n$. Let $H=G_i$, let $\pi:H\to \mathrm{Sym}(m)$ be the projection given by the action of $H$ on the system of imprimitivity and let $J$ be the image of $\pi$. Let $K$ be the permutation group induced by the action of the stabilizer of a block on the block. Thus, $K$ is a transitive subgroup of $\mathrm{Sym}(k)$ and $H\le K\mathrm{wr} J$.  
We now follow closely the argument of Whiston\footnote{The notation here matches that of Whiston.} in~\cite[Section~5]{2000W}, refining the argument by using the fact that $S \setminus \{\rho_i\}$ is an independent generating set consisting of involutions. The same idea can also be found in~\cite[page~471]{2016CFLM}.

Let $L$ be a subset of $S\setminus\{\rho_i\}$ forming an independent generating set for the permutation group $J$, that is, $L^\pi$ is an independent generating set for $J$. Let $C$ be the subset of $(S\setminus\{\rho_i\})\setminus L$ consisting of the elements commuting with $L$, and let $R$ be $(S\setminus\{\rho_i\})\setminus (C\cup L)$. Clearly, 
\begin{equation}\label{r=l+c+r}r=|L|+|C|+|R|+1.\end{equation}

Suppose first that  $J$ admits two distinct commuting proper normal subgroups $A$ and $B$ with $J=AB$. Since $J$ is primitive, $A$ and $B$ are both transitive. Since $A$ and $B$ centralize each other, we deduce that $A$ and $B$ are both regular. Thus $J=A\times B$ and $m=|A|.$ The primitivity of $J$ implies that $A$ and $B$ are non-abelian simple groups.  Since $L$ is independent, we get $2^{|L|}\le |J|=m^2$ and hence $|L|\le 2\log_2(m)$, where $m\ge 60$. As the centre of $J$ is the identity, ${\bf C}_{K\mathrm{wr}J}(\langle J\rangle)$ is contained in the base group $K^m$ of $K\mathrm{wr} J$. Since $\langle L\rangle$ acts transitively on the blocks, 
$${\bf C}_{K^m}(\langle L\rangle)$$
is a diagonal subgroup of the base group $K^m$ of $K\mathrm{wr}J$, that is, there exist group automorphisms $\varphi_2,\ldots,\varphi_m:K\to K$ such that
${\bf C}_{K^m}(\langle L\rangle)$
is a subgroup of $$\{(k,k^{\varphi_2},\ldots,k^{\varphi_m})\mid k\in K\}.$$
As $C\subseteq {\bf C}_{K^m}(\langle L\rangle)$, the main result of Whiston~\cite[Theorem~1]{2000W} implies 
\begin{equation}\label{eq:c}
 |C|\le k-1.
 \end{equation} Finally, from the structure of $J$ we have described above, $R$ has cardinality at most $4$. From~\eqref{r=l+c+r}, we get
\begin{equation*}
r\le 2\log_2m+k-1+4+1=2\log_2m+k+4.
\end{equation*}
Using $m\ge 60$, if follows from a computation that  $2\log_2m+k+4\le \lfloor 3(n-1)/5\rfloor$. For the rest of the argument, we may suppose that $J$ does not admit a decomposition $J=AB$, with $A$ and $B$ as above. In particular, this implies that the set of labels of the elements in $L$ forms an interval, say $L=\{\rho_{a},\rho_{a+1},\ldots,\rho_{a+\ell-1}\}$ where $\ell=|L|$ and $(\rho_{a+i}\rho_{a+i+1})^2\ne 1$ $\forall i\in \{0,\ldots,\ell-2\}$. From this, it also follows that 
\begin{equation}\label{eq:r}
|R|\le 2.
\end{equation}
 
 Applying~\cite[Theorem~1]{2000W} to $J$, we get
 \begin{equation}\label{eq:l}
 |L|\le m-1,
 \end{equation}
where the equality can be attained if and only if $J=\mathrm{Sym}(m)$. Arguing exactly as above with ${\bf C}_{K\mathrm{wr}J}(\langle L\rangle)$, we deduce that~\eqref{eq:c} holds true also in this case; moreover, if the equality is attained, then $K=\mathrm{Sym}(k)$ and $\langle C\rangle$ projects surjectively to $K$.

From~\eqref{r=l+c+r},~\eqref{eq:c},~\eqref{eq:r}, and~\eqref{eq:l}, we obtain  
\begin{equation}\label{eq:sunday}  
r\leq m+k+1.  
\end{equation}  
The inequality $m+k+1\leq \lfloor3(n-1)/5\rfloor$ holds, except in the following cases:  
\begin{itemize}  
\item $m=2$ and $2\leq k\leq 17$, or $2\leq m\leq 17$ and $k=2$, or  
\item $m=3$ and $3\leq k\leq 5$, or $3\leq m\leq 5$ and $k=3$.  
\end{itemize}  
With the aid of a computer, we have computed the maximum rank of an SGGI for $\mathrm{Alt}(n)$ for every $n\leq 18$. In particular, for the remainder of the argument, we may assume that $m=2$ and $10\leq k\leq 17$, or $10\leq m\leq 17$ and $k=2$.  

\smallskip  
\noindent\textsc{Case $k=2$.}  
\smallskip  

\noindent Here, $G_i$ is a subgroup of $\mathrm{Sym}(2)^m\rtimes \mathrm{Sym}(m)$. Again, using a computer computation, we have verified that, except for the symmetric group $\mathrm{Sym}(m)$, the maximum rank of an SGGI for a primitive group of degree $m$ is at most $m-3$. In particular, by refining~\eqref{eq:sunday}, we deduce that $r\leq m+1=n/2+1<\lfloor3(n-1)/5\rfloor$, where the last inequality follows from the fact that $m\geq 10$. Therefore, we may assume that $J=\mathrm{Sym}(m)$.  

Let $V=\mathrm{Sym}(2)^m$ and regard it as a module for $J$ over the field with two elements. The space $V$ has two natural $J$-submodules: the $1$-dimensional module $V_1$, consisting of the constant vectors, and the $(m-1)$-dimensional module $V_2$, consisting of the vectors whose entries sum to $0$. Clearly, $V=V_1\oplus V_2$ when $m$ is odd, and $0<V_1<V_2<V$ when $m$ is even. Since $G_i\leq \mathrm{Alt}(2m)$, we deduce that $G_i\leq V_2\rtimes J$ in both cases. Mortimer~\cite{mortimer} shows that $V_2$ is an irreducible $J$-module when $m$ is odd, and that $V_1$ is the unique non-trivial $J$-submodule of $V_2$ when $m$ is even.
After this digression, we are ready to conclude the proof of the case $k=2$.  

Recall that $L=\{\rho_{a},\rho_{a+1},\ldots,\rho_{a+\ell-1}\}$ and $\ell=m-1$. Let $\rho\in R\cup C$. Since $\langle L\rangle$ projects to $J$, there exists $\rho'\in \langle L\rangle$ such that $f=\rho\rho'\in V_2$. The independence condition implies that $f\neq 1$.  
When $m$ is odd, or when $m$ is even and $f\in V_2\setminus V_1$, the paragraph above implies that $\langle f,L\rangle=G_i$ and hence $r=m<\lfloor3(n-1)/5\rfloor$. When $m$ is even and $f\in V_1$, we deduce that $R\cup C$ can contain at most one more element besides $\rho$. Thus, $r=m+1<\lfloor3(n-1)/5\rfloor$.

\smallskip
\noindent\textsc{Case $m=2$.} 
\smallskip

\noindent Here $\ell=1$, $L=\{\rho_a\}$ and $G_i\le \mathrm{Sym}(k)\mathrm{wr}\mathrm{Sym}(2)$. We write the elements of $\mathrm{Sym}(k)\mathrm{wr}\mathrm{Sym}(2)$ as $(x,y)(1\,2)^\varepsilon$, where $x,y\in\mathrm{Sym}(k)$, $\varepsilon\in \{0,1\}$ and $(1\,2)$ is the generator of $J$.  Replacing $\rho_a$ and $G_i$ by a suitable conjugate via an element of $\mathrm{Sym}(k)\mathrm{wr}\mathrm{Sym}(2)$, we may suppose that $\rho_a=(1\,2)$.\footnote{Indeed, in general $\rho_a=(x,y)(1\,2)$, for some $x,y\in\mathrm{Sym}(k)$. Since $\rho_a$ has order $2$, we have $1=\rho_a^2=(xy,yx)$ and hence $y=x^{-1}$. Thus $\rho_a=(x,x^{-1})(1\,2)$. Now, $(1,x)\rho_a(1,x)^{-1}=(1,x)(x,x^{-1})(1\,2)(1,x^{-1})=(x,1)(1\,2)(1,x^{-1})=(x,1)(x^{-1},1)(1\,2)=(1\,2)$.} Now, $\rho_a=(1\,2)$ as a permutation of $\mathrm{Sym}(2k)$ consists of $k$ cycles of length $2$, because $\rho_a$ is an involution swapping the two blocks of imprimivity of $\mathrm{Sym}(k)\mathrm{wr}\mathrm{Sym}(2)$. In particular, $\rho_a$ is an even permutation if and only if $k$ is even.  As $\rho_a\in G_i\le\mathrm{Alt}(n)$, $k$ is even and hence $k\in \{10,12,14,16\}$. The set $C$ contains $\{\rho_0,\ldots,\rho_{a-2},\rho_{a+2},\ldots,\rho_{r-1}\}\setminus\{\rho_i\}$. For each $j\in \{0,\ldots,a-2,a+2,\ldots,r-1\}\setminus\{i\}$, we may write $\rho_j=(x_j,x_j)(1\,2)^{\varepsilon_j}$, where $x_j\in \mathrm{Sym}(k)$ and $\varepsilon_j\in \{0,1\}$. Set $V=\langle x_j\mid j\le a-2 \hbox{ or }j\ge a+2, j\ne i\rangle$.

If $R=\emptyset$, then by refining~\eqref{eq:sunday}, we deduce that $r\leq k+1=n/2+1<\lfloor3(n-1)/5\rfloor$. Therefore, for the rest of the argument, we may suppose $R\ne \emptyset$.

 Assume $V$ transitive and $V\ne \mathrm{Sym}(k)$. We have determined with \texttt{magma} the maximum rank of every transitive proper subgroup of $\mathrm{Sym}(k)$ and we have verified that it is at most $k-3$. In particular, if $V$ is transitive, then by refining~\eqref{eq:sunday}, we deduce that $r\leq k+1=n/2+1<\lfloor3(n-1)/5\rfloor$. 

Assume $V=\mathrm{Sym}(k)$. Let $\rho_{i'}\in R$. Clearly, $G_{i'}$ is transitive because it contains $L\cup C$ and $\langle L,C\rangle$ is transitive. If $G_{i'}$ is primitive, then $r\le\lfloor3(n-1)/5\rfloor$ by Proposition~\ref{prop:Giprimitive}. If $G_{i'}$ is imprimitive, then by the first part of the proof of this proposition, we deduce that either $r\le \lfloor 3(n-1)/5\rfloor$ or $G_{i'}$ admits a system of imprimitivity consisting of 2 blocks of size $k$. This system of imprimitivity must be the same system of imprimitivity preserved by $G_i$, because $V$ induces the whole symmetric group on the two blocks preserved by $G_i$. This implies that $G=\langle G_i,G_{i'}\rangle$ preserves a non-trivial system of imprimitivity, which is a contradiction.

Assume $V$ intransitive. If $V$ has at least three orbits or $V$ does not induce the whole symmetric group in its action on one of its orbits, then~\cite[Proposition~1]{2000W} implies that $|C|\le k-3$ and hence we reach the same conclusion as in the previous paragraphs. Therefore, $V$ has two orbits and $V$ induces the whole symmetric group on both of its orbits. In particular, $V$ is a subdirect subgroup of $\mathrm{Sym}(\kappa)\times\mathrm{Sym}(k-\kappa)$, for some $1\le \kappa\le k/2$. Suppose $V\ne \mathrm{Sym}(\kappa)\times\mathrm{Sym}(k-\kappa)$. With the aid of a computer we have determined the maximum rank of an SGGI for every proper subdirect subgroup of $\mathrm{Sym}(\kappa)\times\mathrm{Sym}(k-\kappa)$, in all cases the rank is at most $k-3$. Therefore, in this case we can argue as above. Thus $V=\mathrm{Sym}(\kappa)\times\mathrm{Sym}(k-\kappa)$. In particular,  $|C|\le k-2$ and hence $r\le |C|+|L|+|R|+1\le k+2$.  Now, $k+2\le\lfloor3(n-1)/5\rfloor$, for $k\in \{14,16\}$. Therefore, we may suppose $k\in\{10,12\}$. Here the proof is again via the use of a computer. For every $1\le\kappa\le k/2$, we have determined all the SGGI over 
$$\mathrm{Diag}(
(\mathrm{Sym}(\kappa)\times\mathrm{Sym}(k-\kappa))\times 
(\mathrm{Sym}(\kappa)\times\mathrm{Sym}(k-\kappa)))$$
of rank $k-2$. Next, for each such an SGGI, we have determined all possible extensions of the generating set to an SGGI for $\mathrm{Alt}(2k)$. In each case, the rank is at most $k+1\le \lfloor 3(n-1)/5\rfloor$.
\end{proof}

\section{Proof of Theorem~\ref{main}}\label{sec:proof}
We are now ready to bring together the threads of our argument.
\begin{proof}[Proof of Theorem~$\ref{main}$]
Let $G=\mathrm{Alt}(n)$ and let $\Gamma=(G,S)$ be an SGGI for $G$ of maximum rank. When $n\le 9$, the proof follows with a computation on small alternating groups. Therefore, for the rest of the argument we may suppose $n\ge 10$.

Suppose first that $\Gamma$ has a $2$-fracture graph. Then, by Proposition~\ref{frac2}, we deduce $r\le n/2$. Using $n\ge 10$, it follows that $n/2\le \lfloor 3(n-1)/5\rfloor$.

Suppose next that $\Gamma$ satisfies Hypothesis~\ref{hyp} and hence we may apply the result from Section~\ref{sec:4}. In this case, the result follows from Proposition~\ref{casePS}.

Suppose next that $\Gamma$ satisfies Hypothesis~\ref{hypNP}. In this case, the result follows from Corollaries~\ref{p=0} and~\ref{splitNPP}.

Finally, when $G_i$ is transitive for some $i$, the result follows from Propositions~\ref{prop:Giprimitive} and~\ref{prop:Giimprimitive}, depending on whether $G_i$ is primitive or imprimitive.
\end{proof}
\section*{Acknowledgments}
This paper is funded by the European Union - Next Generation EU, Missione 4 Componente 1 CUP B53D23009410006, PRIN 2022- 2022PSTWLB - Group Theory and Applications.

\section{Some codes}
\begin{lstlisting}
/*the input is a finite group G and this function returns yes
 there exist two proper subgroups A and B of G with G=AB and [A,B]=1*/

isok:=function(G)
local Nor,e,a,b,A,B;
Nor:=[x`subgroup:x in NormalSubgroups(G)|
      #(x`subgroup) gt 1 and #(x`subgroup) lt #G];
e:=0;
for a in [1..#Nor] do
 A:=Nor[a];
 for b in [a+1..#Nor] do
   B:=Nor[b];
   if #CommutatorSubgroup(A,B) eq 1 and G eq sub<G|A,B> then 
      e:=1;break a;end if;
 end for;
end for; 
return(e eq 1);
end function;


/*this function given a group G and a subgroup H returns 
a family of representatives for the involutions in G up to H-conjugacy*/
InvolutionRepresentatives := function(G, H)
local involutions,reps,ind,cind,eind,gg,f,g;

involutions :=[x:x in Classes(G)|x[1] eq 2];
reps:={@@};
for ind in [1..#involutions] do
  cind:=involutions[ind][2];
  eind:=0;
  while eind lt cind do
    gg:=involutions[ind][3]^Random(G);
    f:=0;for g in reps do if IsConjugate(H,g,gg) then 
     f:=1;break g;end if;end for;
  if f eq 0 then Include(~reps,gg);eind:=eind+Index(H,Centralizer(H,gg));end if;
  end while;
end for;
return(reps);
end function;


/*this function checks if S is an irredundant set of generators for the group G*/
isirredundant:=function(G,S)
local s;
e:=0;
if not G eq sub<G|S> then e:=1;
  else
  for s in [1..#S] do 
    if G eq sub<G|[S[i]:i in [1..#S]|not i eq s]> then e:=1;break s;end if;
  end for;
end if;
return(e eq 0);
end function;  

/*the input of this function is a group G and a list 
[g1,...,g_\ell] of elements of G. The function returns 
all the lists [g1,...,g_{\ell},g_{\ell+1}] where
-)g_{\ell+1} is an involution commuting with [g1,...,g_{\ell-1}],
-) [g_1,\ldots,g_{\ell+1}] is an irredundant set of generators 
for the group generated by these elements,
-) G is generated by [g1,\ldots,g_{\ell+1}] together with the 
centralizer of [g1,...,g_\ell].

 All such lists are determined up to conjugacy 
 in the centralizer of the group generated by
[g1,...,g_\ell]*/

extendone:=function(G,lst)
local C,CG,I;
C:=Centralizer(G,sub<G|[lst[x]:x in [1..#lst]]>);
CG:=Centralizer(G,sub<G|[lst[x]:x in [1..#lst-1]]>);
I:=InvolutionRepresentatives(CG,C);
I:={lst cat [x]:x in I|isirredundant(sub<G|lst cat [x]>,lst cat [x]) 
     and G eq sub<G|lst cat [x], C meet G>};
return(I);
end function;

/*the input is a finite group G and 
a family of lists of elements from G, 
Then this function applies to each element 
of this family the function extendone and 
then returns the union*/

extend:=function(G,Lst)
return(&join{extendone(G,lst):lst in Lst});
end function;


/*this function does the same job as extendone, 
but only select the g_{\ell+1} such that the order 
of g_\ellg_{\ell+1} is greater than 2. 
From the point of view of the 
Coxeter diagram this means that 
the graph is a connected path.*/

extendonei:=function(G,lst)
local C,CG,I;
C:=Centralizer(G,sub<G|[lst[x]:x in [1..#lst]]>);
CG:=Centralizer(G,sub<G|[lst[x]:x in [1..#lst-1]]>);
I:=InvolutionRepresentatives(CG,C);
I:={lst cat [x]:x in I|Order(x*lst[#lst]) gt 2 
     and isirredundant(sub<G|lst cat [x]>,lst cat [x]) 
      and G eq sub<G|lst cat [x], C meet G>};
return(I);
end function;


/*the input is a finite group G and a family of 
lists of elements from G, Then this function 
applies to each element of this family the 
function extendonei and then returns the union*/



extendi:=function(G,Lst)
return(&join{extendonei(G,lst):lst in Lst});
end function;


/*the next to functions extend the 
list on the left and also on the right*/


extendonebilateral:=function(G,lst)
local L,I,J,C,CG, ind;

C:=Centralizer(G,sub<G|[lst[x]:x in [1..#lst]]>);
CG:=Centralizer(G,sub<G|[lst[x]:x in [1..#lst-1]]>);
J:=InvolutionRepresentatives(CG,C);
I:={lst cat [x]:x in J|Order(lst[#lst]*x) gt 2 
    and isirredundant(sub<G|lst cat [x]>,lst cat [x])};
C:=Centralizer(G,sub<G|[lst[x]:x in [1..#lst]]>);
CG:=Centralizer(G,sub<G|[lst[x]:x in [2..#lst]]>);
J:=InvolutionRepresentatives(CG,C);
I:=I join {[x] cat lst:x in J|Order(x*lst[#lst]) gt 2 
   and isirredundant(sub<G|[x] cat lst>,[x] cat lst)};
for ind in {i:i in [1..#lst-1]|Order(lst[i]*lst[i+1]) le 2} do
   C:=Centralizer(G,sub<G|[lst[x]:x in [1..#lst]]>);
   CG:=Centralizer(G,sub<G|[lst[x]:x in [1..#lst]|not x in [ind,ind+1]]>);
   J:=InvolutionRepresentatives(CG,C);
   I:=I join {[lst[v]:v in [1..ind]] cat [x] 
            cat [lst[v]:v in [ind+1..#lst]]:x in J|
        Order(lst[ind]*x) gt 2 and isirredundant(sub<G|[lst[v]:v in [1..ind]] 
        cat [x] cat [lst[v]:v in [ind+1..#lst]]>,[lst[v]:v in [1..ind]] 
        cat [x] cat [lst[v]:v in [ind+1..#lst]])};
end for;
return(I);
end function;


extendbilateral:=function(G,Lst)
return(&join{extendonebilateral(G,lst):lst in Lst});
end function;





/* the function returns true if G is a sggi of rank 0*/

sggi0:=function(G)
return(#G eq 1);
end function;



/*  the function returns true if G is a sggi of rank 0*/

sggi1:=function(G)
return(#G eq 2);end function;

/*  the function returns true if G is a sggi of rank 2*/

sggi2:=function(G)
local Cl,e,g1,g2,G2;
Cl:=[x[3]:x in Classes(G)|x[1] eq 2];
G2:=&join{Class(G,x):x in Cl};
e:=0;
for g1 in Cl do
  for g2 in G2 do    
  if isirredundant(G,[g1,g2]) then e:=1;break g1;end if; 
  end for;end for;  
return(e eq 1);
end function;



/*  the function returns true if G is a sggi of rank 3*/

sggi3:=function(G)
local N,Cl,g1,Cg1,I1,g2,I2,g3,Cg1g2,I3,g4,Cg1g2g3,Answers;
Answers:={@@};
N:=G;Cl:=InvolutionRepresentatives(G,N);
for g1 in Cl do
  Cg1:=Centralizer(N,g1);
  I1:=InvolutionRepresentatives(G,Cg1);
  I1:=[x:x in I1|isirredundant(sub<G|g1,x>,[g1,x]) 
       and G eq sub<G|g1,x,Cg1 meet G>];
  for g2 in I1 do
    Cg1g2:=Centralizer(Cg1,g2);
    I2:=InvolutionRepresentatives(Cg1 meet G,Cg1g2);
    I2:=[x:x in I2|isirredundant(sub<G|g1,g2,x>,[g1,g2,x])];
    for g3 in I2 do 
    if G eq sub<G|g1,g2,g3> then Include(~Answers,[g1,g2,g3]);break g1;end if;
    end for;
  end for;  
end for;  
return(Answers);
end function;





/*  the function returns true if G is a sggi 
of rank 3 and finds all strings of length 3 for G */

Sggi3:=function(G)
local N,Cl,g1,Cg1,I1,g2,I2,g3,Cg1g2,I3,g4,Cg1g2g3,Answers;
Answers:={@@};
N:=G;Cl:=InvolutionRepresentatives(G,N);
for g1 in Cl do
  Cg1:=Centralizer(N,g1);
  I1:=InvolutionRepresentatives(G,Cg1);
  I1:=[x:x in I1|isirredundant(sub<G|g1,x>,[g1,x]) 
       and G eq sub<G|g1,x,Cg1 meet G>];
  for g2 in I1 do
    Cg1g2:=Centralizer(Cg1,g2);
    I2:=InvolutionRepresentatives(Cg1 meet G,Cg1g2);
    I2:=[x:x in I2|isirredundant(sub<G|g1,g2,x>,[g1,g2,x])];
    for g3 in I2 do 
    if G eq sub<G|g1,g2,g3> then Include(~Answers,[g1,g2,g3]);end if;
    end for;
  end for;  
end for;  
return(Answers);
end function;



/*  the function returns true if G is a sggi of rank 4*/

sggi4:=function(G)
local Cl,g1,Cg1,I1,g2,I2,g3,
      Cg1g2,I3,g4,Cg1g2g3,Answers;
Answers:={@@};
N:=G;Cl:=InvolutionRepresentatives(G,N);
for g1 in Cl do
  Cg1:=Centralizer(G,g1);
  I1:=InvolutionRepresentatives(G,Cg1);
  I1:=[x:x in I1|isirredundant(sub<G|g1,x>,[g1,x]) 
        and G eq sub<G|g1,x,Cg1 meet G>];
  for g2 in I1 do
    Cg1g2:=Centralizer(Cg1,g2);
    I2:=InvolutionRepresentatives(Cg1 meet G,Cg1g2);
    I2:=[x:x in I2|isirredundant(sub<G|g1,g2,x>,[g1,g2,x]) 
         and G eq sub<G|g1,g2,x,Cg1g2 meet G>];
    for g3 in I2 do
      Cg1g2g3:=Centralizer(Cg1g2,g3);
      I3:=InvolutionRepresentatives(Cg1g2 meet G,Cg1g2g3);
      I3:=[x:x in I3|isirredundant(sub<G|g1,g2,g3,x>,[g1,g2,g3,x])];
      for g4 in I3 do 
      if G eq sub<G|g1,g2,g3,g4> then 
      Include(~Answers,[g1,g2,g3,g4]);break g1;end if;
      end for;
    end for;
  end for;  
end for;  
return(Answers);
end function;

/* the function returns true if G is a sggi of rank 4 and finds all strings of length 4 for G*/

Sggi4:=function(G)
local Cl,g1,Cg1,I1,g2,I2,g3,
        Cg1g2,I3,g4,Cg1g2g3,Answers;
Answers:={@@};
N:=G;Cl:=InvolutionRepresentatives(G,N);
for g1 in Cl do
  Cg1:=Centralizer(G,g1);
  I1:=InvolutionRepresentatives(G,Cg1);
  I1:=[x:x in I1|isirredundant(sub<G|g1,x>,[g1,x]) 
  and G eq sub<G|g1,x,Cg1 meet G>];
  for g2 in I1 do
    Cg1g2:=Centralizer(Cg1,g2);
    I2:=InvolutionRepresentatives(Cg1 meet G,Cg1g2);
    I2:=[x:x in I2|isirredundant(sub<G|g1,g2,x>,[g1,g2,x]) 
    and G eq sub<G|g1,g2,x,Cg1g2 meet G>];
    for g3 in I2 do
      Cg1g2g3:=Centralizer(Cg1g2,g3);
      I3:=InvolutionRepresentatives(Cg1g2 meet G,Cg1g2g3);
      I3:=[x:x in I3|isirredundant(sub<G|g1,g2,g3,x>,[g1,g2,g3,x])];
      for g4 in I3 do 
      if G eq sub<G|g1,g2,g3,g4> then 
      Include(~Answers,[g1,g2,g3,g4]);end if;
      end for;
    end for;
  end for;  
end for;  
return(Answers);
end function;



/* the function returns true if G is a sggi of rank 5*/

sggi5:=function(G)
local Cl,e,g1,Cg1,I1,g2,I2,g3,Cg1g2,I3,g4,Cg1g2g3,Cg1g2g3g4,I4,g5,Answers;
Answers:={@@};N:=G;Cl:=InvolutionRepresentatives(G,N);
e:=0;
for g1 in Cl do
  Cg1:=Centralizer(G,g1);
  I1:=InvolutionRepresentatives(G,Cg1);I1:=[x:x in I1|isirredundant(sub<G|g1,x>,[g1,x]) and G eq sub<G|g1,x,Cg1 meet G>];
  for g2 in I1 do
    Cg1g2:=Centralizer(Cg1,g2);
    I2:=InvolutionRepresentatives(Cg1 meet G,Cg1g2);
    I2:=[x:x in I2|isirredundant(sub<G|g1,g2,x>,[g1,g2,x]) and G eq sub<G|g1,g2,x,Cg1g2 meet G>];
    for g3 in I2 do
      Cg1g2g3:=Centralizer(Cg1g2,g3);
      I3:=InvolutionRepresentatives(Cg1g2 meet G,Cg1g2g3);I3:=[x:x in I3|isirredundant(sub<G|g1,g2,g3,x>,[g1,g2,g3,x]) and G eq sub<G|g1,g2,g3,x,Cg1g2g3 meet G>];
      for g4 in I3 do
        Cg1g2g3g4:=Centralizer(Cg1g2g3,g4);
        I4:=InvolutionRepresentatives(Cg1g2g3 meet G,Cg1g2g3g4);
        I4:=[x:x in I4|isirredundant(sub<G|g1,g2,g3,g4,x>,[g1,g2,g3,g4,x])];
        for g5 in I4 do if G eq sub<G|g1,g2,g3,g4,g5> then Include(~Answers,[g1,g2,g3,g4,g5]);break g1;end if;end for;  
      end for;
    end for;
  end for;  
end for;  
return(Answers);
end function;


/* the function returns true if G is a sggi of rank 5 and finds all strings of length 5 for G*/

Sggi5:=function(G)
local Cl,e,g1,Cg1,I1,g2,I2,g3,Cg1g2,I3,g4,Cg1g2g3,Cg1g2g3g4,I4,g5,Answers;
Answers:={@@};N:=G;Cl:=InvolutionRepresentatives(G,N);
e:=0;
for g1 in Cl do
  Cg1:=Centralizer(G,g1);
  I1:=InvolutionRepresentatives(G,Cg1);I1:=[x:x in I1|isirredundant(sub<G|g1,x>,[g1,x]) and G eq sub<G|g1,x,Cg1 meet G>];
  for g2 in I1 do
    Cg1g2:=Centralizer(Cg1,g2);
    I2:=InvolutionRepresentatives(Cg1 meet G,Cg1g2);
    I2:=[x:x in I2|isirredundant(sub<G|g1,g2,x>,[g1,g2,x]) and G eq sub<G|g1,g2,x,Cg1g2 meet G>];
    for g3 in I2 do
      Cg1g2g3:=Centralizer(Cg1g2,g3);
      I3:=InvolutionRepresentatives(Cg1g2 meet G,Cg1g2g3);I3:=[x:x in I3|isirredundant(sub<G|g1,g2,g3,x>,[g1,g2,g3,x]) and G eq sub<G|g1,g2,g3,x,Cg1g2g3 meet G>];
      for g4 in I3 do
        Cg1g2g3g4:=Centralizer(Cg1g2g3,g4);
        I4:=InvolutionRepresentatives(Cg1g2g3 meet G,Cg1g2g3g4);
        I4:=[x:x in I4|isirredundant(sub<G|g1,g2,g3,g4,x>,[g1,g2,g3,g4,x])];
        for g5 in I4 do if G eq sub<G|g1,g2,g3,g4,g5> then Include(~Answers,[g1,g2,g3,g4,g5]);break g1;end if;end for;  
      end for;
    end for;
  end for;  
end for;  
return(Answers);
end function;




/* the function returns true if G is a sggi of rank 6*/

sggi6:=function(G)
local Cl,g1,Cg1,I1,g2,I2,g3,Cg1g2,I3,g4,Cg1g2g3,Cg1g2g3g4,I4,g5,Cg1g2g3g4g5,I5,g6,Answers;
Answers:={@@};N:=G;Cl:=InvolutionRepresentatives(G,N);
for g1 in Cl do
  Cg1:=Centralizer(G,g1);
  I1:=InvolutionRepresentatives(G,Cg1);I1:=[x:x in I1|isirredundant(sub<G|g1,x>,[g1,x]) and G eq sub<G|g1,x,Cg1 meet G>];
  for g2 in I1 do
    Cg1g2:=Centralizer(Cg1,g2);
    I2:=InvolutionRepresentatives(Cg1 meet G,Cg1g2);I2:=[x:x in I2|isirredundant(sub<G|g1,g2,x>,[g1,g2,x]) and G eq sub<G|g1,g2,x,Cg1g2 meet G>];
    for g3 in I2 do
      Cg1g2g3:=Centralizer(Cg1g2,g3);
      I3:=InvolutionRepresentatives(Cg1g2 meet G,Cg1g2g3);I3:=[x:x in I3|isirredundant(sub<G|g1,g2,g3,x>,[g1,g2,g3,x]) and G eq sub<G|g1,g2,g3,x,Cg1g2g3 meet G>];
      for g4 in I3 do
        Cg1g2g3g4:=Centralizer(Cg1g2g3,g4);
        I4:=InvolutionRepresentatives(Cg1g2g3 meet G,Cg1g2g3g4);I4:=[x:x in I4|isirredundant(sub<G|g1,g2,g3,g4,x>,[g1,g2,g3,g4,x]) and G eq sub<G|g1,g2,g3,g4,x,Cg1g2g3g4 meet G>];
        for g5 in I4 do
          Cg1g2g3g4g5:=Centralizer(Cg1g2g3g4,g5);
          I5:=InvolutionRepresentatives(Cg1g2g3g4 meet G,Cg1g2g3g4g5);
          I5:=[x:x in I5|isirredundant(sub<G|g1,g2,g3,g4,g5,x>,[g1,g2,g3,g4,g5,x])];
          for g6 in I5 do if G eq sub<G|g1,g2,g3,g4,g5,g6> then Include(~Answers,[g1,g2,g3,g4,g5,g6]);break g1;end if;end for;
        end for;  
      end for;
    end for;
  end for;  
end for;  
return(Answers);
end function;




/* the function returns true if G is a sggi of rank 6 and finds all strings of length 6 for G*/

Sggi6:=function(G)
local Cl,g1,Cg1,I1,g2,I2,g3,Cg1g2,I3,g4,Cg1g2g3,Cg1g2g3g4,I4,g5,Cg1g2g3g4g5,I5,g6,Answers;
Answers:={@@};N:=G;Cl:=InvolutionRepresentatives(G,N);
for g1 in Cl do
  Cg1:=Centralizer(G,g1);
  I1:=InvolutionRepresentatives(G,Cg1);I1:=[x:x in I1|isirredundant(sub<G|g1,x>,[g1,x]) and G eq sub<G|g1,x,Cg1 meet G>];
  for g2 in I1 do
    Cg1g2:=Centralizer(Cg1,g2);
    I2:=InvolutionRepresentatives(Cg1 meet G,Cg1g2);I2:=[x:x in I2|isirredundant(sub<G|g1,g2,x>,[g1,g2,x]) and G eq sub<G|g1,g2,x,Cg1g2 meet G>];
    for g3 in I2 do
      Cg1g2g3:=Centralizer(Cg1g2,g3);
      I3:=InvolutionRepresentatives(Cg1g2 meet G,Cg1g2g3);I3:=[x:x in I3|isirredundant(sub<G|g1,g2,g3,x>,[g1,g2,g3,x]) and G eq sub<G|g1,g2,g3,x,Cg1g2g3 meet G>];
      for g4 in I3 do
        Cg1g2g3g4:=Centralizer(Cg1g2g3,g4);
        I4:=InvolutionRepresentatives(Cg1g2g3 meet G,Cg1g2g3g4);I4:=[x:x in I4|isirredundant(sub<G|g1,g2,g3,g4,x>,[g1,g2,g3,g4,x]) and G eq sub<G|g1,g2,g3,g4,x,Cg1g2g3g4 meet G>];
        for g5 in I4 do
          Cg1g2g3g4g5:=Centralizer(Cg1g2g3g4,g5);
          I5:=InvolutionRepresentatives(Cg1g2g3g4 meet G,Cg1g2g3g4g5);
          I5:=[x:x in I5|isirredundant(sub<G|g1,g2,g3,g4,g5,x>,[g1,g2,g3,g4,g5,x])];
          for g6 in I5 do if G eq sub<G|g1,g2,g3,g4,g5,g6> then Include(~Answers,[g1,g2,g3,g4,g5,g6]);end if;end for;
        end for;  
      end for;
    end for;
  end for;  
end for;  
return(Answers);
end function;


/* the function returns true if G is a sggi of rank 7*/

sggi7:=function(G)
local Cl,e,g1,Cg1,I1,g2,I2,g3,Cg1g2,I3,g4,Cg1g2g3,Cg1g2g3g4,I4,g5,Cg1g2g3g4g5,I5,g6,Cg1g2g3g4g5g6,I6,g7,Answers;
Answers:={@@};N:=G;Cl:=InvolutionRepresentatives(G,N);
for g1 in Cl do
  Cg1:=Centralizer(G,g1);
  I1:=InvolutionRepresentatives(G,Cg1);I1:=[x:x in I1|isirredundant(sub<G|g1,x>,[g1,x]) and G eq sub<G|g1,x,Cg1 meet G>];
  for g2 in I1 do
    Cg1g2:=Centralizer(Cg1,g2);
    I2:=InvolutionRepresentatives(Cg1 meet G,Cg1g2);I2:=[x:x in I2|isirredundant(sub<G|g1,g2,x>,[g1,g2,x]) and G eq sub<G|g1,g2,x,Cg1g2 meet G>];
    for g3 in I2 do
      Cg1g2g3:=Centralizer(Cg1g2,g3);
      I3:=InvolutionRepresentatives(Cg1g2 meet G,Cg1g2g3);I3:=[x:x in I3|isirredundant(sub<G|g1,g2,g3,x>,[g1,g2,g3,x]) and G eq sub<G|g1,g2,g3,x,Cg1g2g3 meet G>];
      for g4 in I3 do
        Cg1g2g3g4:=Centralizer(Cg1g2g3,g4);
        I4:=InvolutionRepresentatives(Cg1g2g3 meet G,Cg1g2g3g4);I4:=[x:x in I4|isirredundant(sub<G|g1,g2,g3,g4,x>,[g1,g2,g3,g4,x]) and G eq sub<G|g1,g2,g3,g4,x,Cg1g2g3g4 meet G>];
        for g5 in I4 do
          Cg1g2g3g4g5:=Centralizer(Cg1g2g3g4,g5);
          I5:=InvolutionRepresentatives(Cg1g2g3g4 meet G,Cg1g2g3g4g5);I5:=[x:x in I5|isirredundant(sub<G|g1,g2,g3,g4,g5,x>,[g1,g2,g3,g4,g5,x]) and G eq sub<G|g1,g2,g3,g4,g5,x,Cg1g2g3g4g5 meet G>];
          for g6 in I5 do
            Cg1g2g3g4g5g6:=Centralizer(Cg1g2g3g4g5,g6);
            I6:=InvolutionRepresentatives(Cg1g2g3g4g5 meet G,Cg1g2g3g4g5g6);I6:=[x:x in I6|isirredundant(sub<G|g1,g2,g3,g4,g5,g6,x>,[g1,g2,g3,g4,g5,g6,x])];
            for g7 in I6 do if G eq sub<G|g1,g2,g3,g4,g5,g6,g7> then Include(~Answers,[g1,g2,g3,g4,g5,g6,g7]);break g1;end if;end for;  
          end for;
        end for;  
      end for;
    end for;
  end for;  
end for;  
return(Answers);
end function;


/* the function returns true if G is a sggi of rank 7 and finds all strings of length 7*/

Sggi7:=function(G)
local Cl,e,g1,Cg1,I1,g2,I2,g3,Cg1g2,I3,g4,Cg1g2g3,Cg1g2g3g4,I4,g5,Cg1g2g3g4g5,I5,g6,Cg1g2g3g4g5g6,I6,g7,Answers;
Answers:={@@};N:=G;Cl:=InvolutionRepresentatives(G,N);
for g1 in Cl do
  Cg1:=Centralizer(G,g1);
  I1:=InvolutionRepresentatives(G,Cg1);I1:=[x:x in I1|isirredundant(sub<G|g1,x>,[g1,x]) and G eq sub<G|g1,x,Cg1 meet G>];
  for g2 in I1 do
    Cg1g2:=Centralizer(Cg1,g2);
    I2:=InvolutionRepresentatives(Cg1 meet G,Cg1g2);I2:=[x:x in I2|isirredundant(sub<G|g1,g2,x>,[g1,g2,x]) and G eq sub<G|g1,g2,x,Cg1g2 meet G>];
    for g3 in I2 do
      Cg1g2g3:=Centralizer(Cg1g2,g3);
      I3:=InvolutionRepresentatives(Cg1g2 meet G,Cg1g2g3);I3:=[x:x in I3|isirredundant(sub<G|g1,g2,g3,x>,[g1,g2,g3,x]) and G eq sub<G|g1,g2,g3,x,Cg1g2g3 meet G>];
      for g4 in I3 do
        Cg1g2g3g4:=Centralizer(Cg1g2g3,g4);
        I4:=InvolutionRepresentatives(Cg1g2g3 meet G,Cg1g2g3g4);I4:=[x:x in I4|isirredundant(sub<G|g1,g2,g3,g4,x>,[g1,g2,g3,g4,x]) and G eq sub<G|g1,g2,g3,g4,x,Cg1g2g3g4 meet G>];
        for g5 in I4 do
          Cg1g2g3g4g5:=Centralizer(Cg1g2g3g4,g5);
          I5:=InvolutionRepresentatives(Cg1g2g3g4 meet G,Cg1g2g3g4g5);I5:=[x:x in I5|isirredundant(sub<G|g1,g2,g3,g4,g5,x>,[g1,g2,g3,g4,g5,x]) and G eq sub<G|g1,g2,g3,g4,g5,x,Cg1g2g3g4g5 meet G>];
          for g6 in I5 do
            Cg1g2g3g4g5g6:=Centralizer(Cg1g2g3g4g5,g6);
            I6:=InvolutionRepresentatives(Cg1g2g3g4g5 meet G,Cg1g2g3g4g5g6);I6:=[x:x in I6|isirredundant(sub<G|g1,g2,g3,g4,g5,g6,x>,[g1,g2,g3,g4,g5,g6,x])];
            for g7 in I6 do if G eq sub<G|g1,g2,g3,g4,g5,g6,g7> then Include(~Answers,[g1,g2,g3,g4,g5,g6,g7]);end if;end for;  
          end for;
        end for;  
      end for;
    end for;
  end for;  
end for;  
return(Answers);
end function;



/* the function returns true if G is a sggi of rank 8*/

sggi8:=function(G)
local Cl,e,g1,Cg1,I1,g2,I2,g3,Cg1g2,I3,g4,Cg1g2g3,Cg1g2g3g4,I4,g5,Cg1g2g3g4g5,I5,g6,Cg1g2g3g4g5g6,I6,g7,Cg1g2g3g4g5g6g7,g8,I7,Answers;
Answers:={@@};N:=G;Cl:=InvolutionRepresentatives(G,N);
for g1 in Cl do
  Cg1:=Centralizer(G,g1);
  I1:=InvolutionRepresentatives(G,Cg1);I1:=[x:x in I1|isirredundant(sub<G|g1,x>,[g1,x]) and G eq sub<G|g1,x,Cg1 meet G>];
  for g2 in I1 do
    Cg1g2:=Centralizer(Cg1,g2);
    I2:=InvolutionRepresentatives(Cg1 meet G,Cg1g2);I2:=[x:x in I2|isirredundant(sub<G|g1,g2,x>,[g1,g2,x]) and G eq sub<G|g1,g2,x,Cg1g2 meet G>];
    for g3 in I2 do
      Cg1g2g3:=Centralizer(Cg1g2,g3);
      I3:=InvolutionRepresentatives(Cg1g2 meet G,Cg1g2g3);I3:=[x:x in I3|isirredundant(sub<G|g1,g2,g3,x>,[g1,g2,g3,x]) and G eq sub<G|g1,g2,g3,x,Cg1g2g3 meet G>];
      for g4 in I3 do
        Cg1g2g3g4:=Centralizer(Cg1g2g3,g4);
        I4:=InvolutionRepresentatives(Cg1g2g3 meet G,Cg1g2g3g4);I4:=[x:x in I4|isirredundant(sub<G|g1,g2,g3,g4,x>,[g1,g2,g3,g4,x]) and G eq sub<G|g1,g2,g3,g4,x,Cg1g2g3g4 meet G>];
        for g5 in I4 do
          Cg1g2g3g4g5:=Centralizer(Cg1g2g3g4,g5);
          I5:=InvolutionRepresentatives(Cg1g2g3g4 meet G,Cg1g2g3g4g5);I5:=[x:x in I5|isirredundant(sub<G|g1,g2,g3,g4,g5,x>,[g1,g2,g3,g4,g5,x]) and G eq sub<G|g1,g2,g3,g4,g5,x,Cg1g2g3g4g5 meet G>];
          for g6 in I5 do
            Cg1g2g3g4g5g6:=Centralizer(Cg1g2g3g4g5,g6);
            I6:=InvolutionRepresentatives(Cg1g2g3g4g5 meet G,Cg1g2g3g4g5g6);I6:=[x:x in I6|isirredundant(sub<G|g1,g2,g3,g4,g5,g6,x>,[g1,g2,g3,g4,g5,g6,x]) and G eq sub<G|g1,g2,g3,g4,g5,g6,x,Cg1g2g3g4g5g6 meet G>];
            for g7 in I6 do
              Cg1g2g3g4g5g6g7:=Centralizer(Cg1g2g3g4g5g6,g7);
              I7:=InvolutionRepresentatives(Cg1g2g3g4g5g6 meet G,Cg1g2g3g4g5g6g7);I7:=[x:x in I7|isirredundant(sub<G|g1,g2,g3,g4,g5,g6,g7,x>,[g1,g2,g3,g4,g5,g6,g7,x])];
              for g8 in I7 do if G eq sub<G|g1,g2,g3,g4,g5,g6,g7,g8> then Include(~Answers,[g1,g2,g3,g4,g5,g6,g7,g8]);break g1;end if;end for;
            end for;  
          end for;
        end for;  
      end for;
    end for;
  end for;  
end for;  
return(Answers);
end function;



/* the function returns true if G is a sggi of rank 8 and finds all strings of length 8 for G*/

Sggi8:=function(G)
local Cl,e,g1,Cg1,I1,g2,I2,g3,Cg1g2,I3,g4,Cg1g2g3,Cg1g2g3g4,I4,g5,Cg1g2g3g4g5,I5,g6,Cg1g2g3g4g5g6,I6,g7,Cg1g2g3g4g5g6g7,g8,I7,Answers;
Answers:={@@};N:=G;Cl:=InvolutionRepresentatives(G,N);
for g1 in Cl do
  Cg1:=Centralizer(G,g1);
  I1:=InvolutionRepresentatives(G,Cg1);I1:=[x:x in I1|isirredundant(sub<G|g1,x>,[g1,x]) and G eq sub<G|g1,x,Cg1 meet G>];
  for g2 in I1 do
    Cg1g2:=Centralizer(Cg1,g2);
    I2:=InvolutionRepresentatives(Cg1 meet G,Cg1g2);I2:=[x:x in I2|isirredundant(sub<G|g1,g2,x>,[g1,g2,x]) and G eq sub<G|g1,g2,x,Cg1g2 meet G>];
    for g3 in I2 do
      Cg1g2g3:=Centralizer(Cg1g2,g3);
      I3:=InvolutionRepresentatives(Cg1g2 meet G,Cg1g2g3);I3:=[x:x in I3|isirredundant(sub<G|g1,g2,g3,x>,[g1,g2,g3,x]) and G eq sub<G|g1,g2,g3,x,Cg1g2g3 meet G>];
      for g4 in I3 do
        Cg1g2g3g4:=Centralizer(Cg1g2g3,g4);
        I4:=InvolutionRepresentatives(Cg1g2g3 meet G,Cg1g2g3g4);I4:=[x:x in I4|isirredundant(sub<G|g1,g2,g3,g4,x>,[g1,g2,g3,g4,x]) and G eq sub<G|g1,g2,g3,g4,x,Cg1g2g3g4 meet G>];
        for g5 in I4 do
          Cg1g2g3g4g5:=Centralizer(Cg1g2g3g4,g5);
          I5:=InvolutionRepresentatives(Cg1g2g3g4 meet G,Cg1g2g3g4g5);I5:=[x:x in I5|isirredundant(sub<G|g1,g2,g3,g4,g5,x>,[g1,g2,g3,g4,g5,x]) and G eq sub<G|g1,g2,g3,g4,g5,x,Cg1g2g3g4g5 meet G>];
          for g6 in I5 do
            Cg1g2g3g4g5g6:=Centralizer(Cg1g2g3g4g5,g6);
            I6:=InvolutionRepresentatives(Cg1g2g3g4g5 meet G,Cg1g2g3g4g5g6);I6:=[x:x in I6|isirredundant(sub<G|g1,g2,g3,g4,g5,g6,x>,[g1,g2,g3,g4,g5,g6,x]) and G eq sub<G|g1,g2,g3,g4,g5,g6,x,Cg1g2g3g4g5g6 meet G>];
            for g7 in I6 do
              Cg1g2g3g4g5g6g7:=Centralizer(Cg1g2g3g4g5g6,g7);
              I7:=InvolutionRepresentatives(Cg1g2g3g4g5g6 meet G,Cg1g2g3g4g5g6g7);I7:=[x:x in I7|isirredundant(sub<G|g1,g2,g3,g4,g5,g6,g7,x>,[g1,g2,g3,g4,g5,g6,g7,x])];
              for g8 in I7 do if G eq sub<G|g1,g2,g3,g4,g5,g6,g7,g8> then Include(~Answers,[g1,g2,g3,g4,g5,g6,g7,g8]);end if;end for;
            end for;  
          end for;
        end for;  
      end for;
    end for;
  end for;  
end for;  
return(Answers);
end function;



/* the function returns true if G is a sggi of rank 9*/

sggi9:=function(G)
local Cl,g1,Cg1,I1,g2,I2,g3,Cg1g2,I3,g4,Cg1g2g3,Cg1g2g3g4,I4,g5,Cg1g2g3g4g5,I5,g6,Cg1g2g3g4g5g6,I6,g7,Cg1g2g3g4g5g6g7,g8,I7,Cg1g2g3g4g5g6g7g8,I8,g9,Answers;
Answers:={@@};N:=G;Cl:=InvolutionRepresentatives(G,N);
for g1 in Cl do
  Cg1:=Centralizer(G,g1);
  I1:=InvolutionRepresentatives(G,Cg1);I1:=[x:x in I1|isirredundant(sub<G|g1,x>,[g1,x]) and G eq sub<G|g1,x,Cg1 meet G>];
  for g2 in I1 do
    Cg1g2:=Centralizer(Cg1,g2);
    I2:=InvolutionRepresentatives(Cg1 meet G,Cg1g2);I2:=[x:x in I2|isirredundant(sub<G|g1,g2,x>,[g1,g2,x]) and G eq sub<G|g1,g2,x,Cg1g2 meet G>];
    for g3 in I2 do
      Cg1g2g3:=Centralizer(Cg1g2,g3);
      I3:=InvolutionRepresentatives(Cg1g2 meet G,Cg1g2g3);I3:=[x:x in I3|isirredundant(sub<G|g1,g2,g3,x>,[g1,g2,g3,x]) and G eq sub<G|g1,g2,g3,x,Cg1g2g3 meet G>];
      for g4 in I3 do
        Cg1g2g3g4:=Centralizer(Cg1g2g3,g4);
        I4:=InvolutionRepresentatives(Cg1g2g3 meet G,Cg1g2g3g4);I4:=[x:x in I4|isirredundant(sub<G|g1,g2,g3,g4,x>,[g1,g2,g3,g4,x]) and G eq sub<G|g1,g2,g3,g4,x,Cg1g2g3g4 meet G>];
        for g5 in I4 do
          Cg1g2g3g4g5:=Centralizer(Cg1g2g3g4,g5);
          I5:=InvolutionRepresentatives(Cg1g2g3g4 meet G,Cg1g2g3g4g5);I5:=[x:x in I5|isirredundant(sub<G|g1,g2,g3,g4,g5,x>,[g1,g2,g3,g4,g5,x]) and G eq sub<G|g1,g2,g3,g4,g5,x,Cg1g2g3g4g5 meet G>];
          for g6 in I5 do
            Cg1g2g3g4g5g6:=Centralizer(Cg1g2g3g4g5,g6);
            I6:=InvolutionRepresentatives(Cg1g2g3g4g5 meet G,Cg1g2g3g4g5g6);I6:=[x:x in I6|isirredundant(sub<G|g1,g2,g3,g4,g5,g6,x>,[g1,g2,g3,g4,g5,g6,x]) and G eq sub<G|g1,g2,g3,g4,g5,g6,x,Cg1g2g3g4g5g6 meet G>];
            for g7 in I6 do
              Cg1g2g3g4g5g6g7:=Centralizer(Cg1g2g3g4g5g6,g7);
              I7:=InvolutionRepresentatives(Cg1g2g3g4g5g6 meet G,Cg1g2g3g4g5g6g7);I7:=[x:x in I7|isirredundant(sub<G|g1,g2,g3,g4,g5,g6,g7,x>,[g1,g2,g3,g4,g5,g6,g7,x]) and G eq sub<G|g1,g2,g3,g4,g5,g6,g7,x,Cg1g2g3g4g5g6g7 meet G>];
              for g8 in I7 do
                Cg1g2g3g4g5g6g7g8:=Centralizer(Cg1g2g3g4g5g6g7,g8);
                I8:=InvolutionRepresentatives(Cg1g2g3g4g5g6g7 meet G,Cg1g2g3g4g5g6g7g8);I8:=[x:x in I8|isirredundant(sub<G|g1,g2,g3,g4,g5,g6,g7,g8,x>,[g1,g2,g3,g4,g5,g6,g7,g8,x])];
                for g9 in I8 do if G eq sub<G|g1,g2,g3,g4,g5,g6,g7,g8,g9> then Include(~Answers,[g1,g2,g3,g4,g5,g6,g7,g8,g9]);break g1;end if;end for;
              end for;
            end for;  
          end for;
        end for;  
      end for;
    end for;
  end for;  
end for;  
return(Answers);
end function;



/* the function returns true if G is a sggi of rank 10*/
sggi10:=function(G)
local Cl,g1,Cg1,I1,g2,I2,g3,Cg1g2,I3,g4,Cg1g2g3,Cg1g2g3g4,I4,g5,Cg1g2g3g4g5,I5,g6,Cg1g2g3g4g5g6,I6,g7,Cg1g2g3g4g5g6g7,g8,I7,Cg1g2g3g4g5g6g7g8,I8,g9,
   Cg1g2g3g4g5g6g7g8g9,I9,g10,Answers;
Answers:={@@};N:=G;Cl:=InvolutionRepresentatives(G,N);
for g1 in Cl do
  Cg1:=Centralizer(G,g1);
  I1:=InvolutionRepresentatives(G,Cg1);I1:=[x:x in I1|isirredundant(sub<G|g1,x>,[g1,x]) and G eq sub<G|g1,x,Cg1 meet G>];
  for g2 in I1 do
    Cg1g2:=Centralizer(Cg1,g2);
    I2:=InvolutionRepresentatives(Cg1 meet G,Cg1g2);I2:=[x:x in I2|isirredundant(sub<G|g1,g2,x>,[g1,g2,x]) and G eq sub<G|g1,g2,x,Cg1g2 meet G>];
    for g3 in I2 do
      Cg1g2g3:=Centralizer(Cg1g2,g3);
      I3:=InvolutionRepresentatives(Cg1g2 meet G,Cg1g2g3);I3:=[x:x in I3|isirredundant(sub<G|g1,g2,g3,x>,[g1,g2,g3,x]) and G eq sub<G|g1,g2,g3,x,Cg1g2g3 meet G>];
      for g4 in I3 do
        Cg1g2g3g4:=Centralizer(Cg1g2g3,g4);
        I4:=InvolutionRepresentatives(Cg1g2g3 meet G,Cg1g2g3g4);I4:=[x:x in I4|isirredundant(sub<G|g1,g2,g3,g4,x>,[g1,g2,g3,g4,x]) and G eq sub<G|g1,g2,g3,g4,x,Cg1g2g3g4 meet G>];
        for g5 in I4 do
          Cg1g2g3g4g5:=Centralizer(Cg1g2g3g4,g5);
          I5:=InvolutionRepresentatives(Cg1g2g3g4 meet G,Cg1g2g3g4g5);I5:=[x:x in I5|isirredundant(sub<G|g1,g2,g3,g4,g5,x>,[g1,g2,g3,g4,g5,x]) and G eq sub<G|g1,g2,g3,g4,g5,x,Cg1g2g3g4g5 meet G>];
          for g6 in I5 do
            Cg1g2g3g4g5g6:=Centralizer(Cg1g2g3g4g5,g6);
            I6:=InvolutionRepresentatives(Cg1g2g3g4g5 meet G,Cg1g2g3g4g5g6);I6:=[x:x in I6|isirredundant(sub<G|g1,g2,g3,g4,g5,g6,x>,[g1,g2,g3,g4,g5,g6,x]) and G eq sub<G|g1,g2,g3,g4,g5,g6,x,Cg1g2g3g4g5g6 meet G>];
            for g7 in I6 do
              Cg1g2g3g4g5g6g7:=Centralizer(Cg1g2g3g4g5g6,g7);
              I7:=InvolutionRepresentatives(Cg1g2g3g4g5g6 meet G,Cg1g2g3g4g5g6g7);I7:=[x:x in I7|isirredundant(sub<G|g1,g2,g3,g4,g5,g6,g7,x>,[g1,g2,g3,g4,g5,g6,g7,x]) and G eq sub<G|g1,g2,g3,g4,g5,g6,g7,x,Cg1g2g3g4g5g6g7 meet G>];
              for g8 in I7 do
                Cg1g2g3g4g5g6g7g8:=Centralizer(Cg1g2g3g4g5g6g7,g8);
                I8:=InvolutionRepresentatives(Cg1g2g3g4g5g6g7 meet G,Cg1g2g3g4g5g6g7g8);I8:=[x:x in I8|isirredundant(sub<G|g1,g2,g3,g4,g5,g6,g7,g8,x>,[g1,g2,g3,g4,g5,g6,g7,g8,x]) and G eq sub<G|g1,g2,g3,g4,g5,g6,g7,g8,x,Cg1g2g3g4g5g6g7g8 meet G>];
                  for g9 in I8 do
                Cg1g2g3g4g5g6g7g8g9:=Centralizer(Cg1g2g3g4g5g6g7g8,g9);
                I9:=InvolutionRepresentatives(Cg1g2g3g4g5g6g7g8 meet G,Cg1g2g3g4g5g6g7g8g9);I9:=[x:x in I9|isirredundant(sub<G|g1,g2,g3,g4,g5,g6,g7,g8,g9,x>,[g1,g2,g3,g4,g5,g6,g7,g8,g9,x])];
                for g10 in I9 do if G eq sub<G|g1,g2,g3,g4,g5,g6,g7,g8,g9,g10> then Include(~Answers,[g1,g2,g3,g4,g5,g6,g7,g8,g9,g10]);break g1;end if;end for;
               end for;     
              end for;
            end for;  
          end for;
        end for;  
      end for;
    end for;
  end for;  
end for;  
return(Answers);
end function;

/*given a finite permutation group G which is a SGGI with generators [g1,...,g_\ell], the function checks if the corresponding Schreier graph admits a 2-fracture graph*/
has2fracturegraph:=function(G,lst)
local d,V,E,i,Gi,Oi,Ci,e,O;
d:=#lst;
e:=0;
for i in [1..d] do
 Gi:=sub<G|[lst[j]:j in [1..d]|not j eq i]>;
 Oi:=Orbits(Gi);
 Ci:={o:o in Orbits(sub<G|lst[i]>)|#o eq 2};
 if #{o:o in Ci|1 in {#(o meet O):O in Oi}} ge 2 then e:=e+1;else break i;end if;
end for;
return(e eq d);
end function;



/*given a finite permutation group G which is a SGGI with generators [g1,...,g_\ell], the function returns all 2-fracture graphs for the given set of generators*/
Twofracturegraphs:=function(G,lst)
local d,V,E,i,Gi,Oi,Ci,e,O,CCi,Ans,GGraphs;
d:=#lst;
GGraphs:={};
Ans:=[];
e:=0;
for i in [1..d] do
 Gi:=sub<G|[lst[j]:j in [1..d]|not j eq i]>;
 Oi:=Orbits(Gi);
 Ci:={Set(o):o in Orbits(sub<G|lst[i]>)|#o eq 2};
 CCi:={o:o in Ci|1 in {#(o meet O):O in Oi}}; if #CCi ge 2 then e:=e+1;Append(~Ans,CCi);else break i;end if;
end for;
if e eq d then
GGraphs:=CartesianProduct([Subsets(Ans[i],2):i in [1..d]]);
GGraphs:={Graph<{1..Degree(G)}|&join{c[i]:i in [1..d]}>: c in GGraphs};
else 
GGraphs:={};
end if;
return(GGraphs);
end function;


/*code for Corollary 2.2*/
for n in [1..8] do
for g in [1..#TransitiveGroups(n)] do
  G:=TransitiveGroup(n,g);
  if sggi2(G) then
    S:=[[x,y]:x,y in G|sub<G|x,y> eq G and Order(x) eq 2 and Order(y) eq 2];
    U:={};
    for x in S do 
     UU:=Twofracturegraphs(G,x);
     UU:={Gr:Gr in UU|IsConnected(Gr) and (not IsTree(Gr))};
     U:=U join UU;
    end for;
    if #U ge 1 then 2,n,g;end if; 
  end if;   
  if #G ge 2^3 and #sggi3(G) ge 1then
    S:=Sggi3(G);
    U:={};
    for x in S do 
     UU:=Twofracturegraphs(G,x);
     UU:={Gr:Gr in UU|IsConnected(Gr) and (not IsTree(Gr))};
     U:=U join UU;
    end for;
    if #U ge 1 then 3,n,g;end if; 
  end if;   
  if #G ge 2^4 and #sggi4(G) ge 1then
    S:=Sggi4(G);
    U:={};
    for x in S do 
     UU:=Twofracturegraphs(G,x);
     UU:={Gr:Gr in UU|IsConnected(Gr) and (not IsTree(Gr))};
     U:=U join UU;
    end for;
    if #U ge 1 then 4,n,g;end if; 
  end if;   
  if #G ge 2^5 and #sggi5(G) ge 1then
    S:=Sggi5(G);
    U:={};
    for x in S do 
     UU:=Twofracturegraphs(G,x);
     UU:={Gr:Gr in UU|IsConnected(Gr) and (not IsTree(Gr))};
     U:=U join UU;
    end for;
    if #U ge 1 then 5,n,g;end if; 
  end if;   
  if #G ge 2^6 and #sggi6(G) ge 1then
    S:=Sggi6(G);
    U:={};
    for x in S do 
     UU:=Twofracturegraphs(G,x);
     UU:={Gr:Gr in UU|IsConnected(Gr) and (not IsTree(Gr))};
     U:=U join UU;
    end for;
    if #U ge 1 then 6,n,g;end if; 
  end if; 
end for;
end for;

/*code for Proposition 3.1*/
for n in [1..8] do
if IsOdd(n) then
for g in [1..#TransitiveGroups(n)] do
  G:=TransitiveGroup(n,g);
  r:=(n-1)/2;
  if r eq 0 and sggi0(G) then n,g;end if;
  if r eq 1 and sggi1(G) then n,g;end if;
  if r eq 2 and sggi2(G) then n,g;end if;
  if r eq 3 then
    V:=[x:x in Sggi3(G)|has2fracturegraph(G,x)];n,g,V;end if;
end for;end if;end for;

\end{lstlisting}


\bibliographystyle{amsplain}

\end{document}